\newcommand{\error}{\mathbf{e}}
\newcommand{\RHSbasisvec}{\mathbf{U}}
\newcommand{\sampvec}{\mathbf{P}}
\newcommand\norm[1]{\left\lVert#1\right\rVert_2}
\newcommand{\RR}[1]{\mathbb{R}^{#1}}
\newcommand{\RRC}[1]{\mathcal{R}^{#1}}
\newcommand{\PDEspace}{\mathbb{R}^N \times [0,T]}
\newcommand{\icspace}{\RRC{N}}
\newcommand{\icspacecoarse}{\tilde{\MC{H}}}
\newcommand{\icspacefine}{\MC{H}'}
\newcommand{\RHS}{\mathbf{R}}
\newcommand{\errorcoarse}{\mathbf{\tilde{e}}}
\newcommand{\errorfine}{\mathbf{e'}}
\newcommand{\trieq}{\vcentcolon=}%\overset{\Delta}{=}}
\newcommand{\vdummy}{\mathbf{q}}
\newcommand{\errorcoarsegen}{\mathbf{\tilde{a}^e}}
\newcommand{\statedummy}{\mathbf{y}}
\newcommand{\defeq}{\trieq}
\newcommand{\timeSpace}{\MC{T}}
\newtheorem{theorem}{Theorem}
\newtheorem{corollary}{Corollary}[theorem]
\newcommand{\MC}{\mathcal}
\newcommand{\afinevec}{{\mathbf{a}^{\prime}}}
\newcommand{\afullvec}{\mathbf{a}}
\newcommand{\acoarsevec}{\tilde{\mathbf{a}}}
\newcommand{\xfullvec}{\mathbf{a_0}}
\newcommand{\xcoarsevec}{\tilde{\mathbf{a}}_0}
\newcommand{\xfinevec}{{\mathbf{a}_0^{\prime}}}
\newcommand{\Wfullvec}{{\mathbf{W}}}
\newcommand{\Wfinevec}{{\mathbf{W}^{\prime}}}
\newcommand{\Wcoarsevec}{\tilde{\mathbf{W}}}
\newcommand{\Pifine}{{\Pi^{\prime}}}
\newcommand{\Picoarse}{\tilde{\Pi}}
\newcommand{\Vfine}{{\MC{V}^{\prime}}}
\newcommand{\Vcoarse}{\tilde{\MC{V}}}
\newcommand{\Wfine}{{\MC{W}^{\prime}}}
\newcommand{\Wcoarse}{\tilde{\MC{W}}}
\newcommand{\Vfull}{{\MC{V}}}
\newcommand{\ufullvec}{{\mathbf{u}}}
\newcommand{\ufinevec}{{\mathbf{u}^{\prime}}}
\newcommand{\ucoarsevec}{\tilde{\mathbf{u}}}
\newcommand{\vfinevec}{{\mathbf{V}^{\prime}}}
\newcommand{\vcoarsevec}{\tilde{\mathbf{V}}}
\newcommand{\vfullvec}{{\mathbf{V}}}
\journal{Computer Methods in Applied Mechanics and Engineering}
\begin{document}
\topmargin -1.5cm
\textheight 23cm
\begin{frontmatter}

\title{The Adjoint Petrov--Galerkin Method for Non-Linear Model Reduction}

\author[b]{Eric J. Parish}
\ead{ejparis@sandia.gov}
\author[a]{Christopher Wentland}
\ead{chriswen@umich.edu}
\author[a]{Karthik Duraisamy}
\ead{kdur@umich.edu}

\address[a]{Department of Aerospace Engineering, University of Michigan, Ann Arbor, MI}
\address[b]{Sandia National Laboratories,  Livermore, CA}

%\cortext[cor1]{Corresponding Author}

%\begin{keyword}
%data-driven modeling\sep
%machine learning\sep
%closure modeling
%\end{keyword}
%\input{abstract.tex}
\begin{abstract}
We formulate a new projection-based reduced-ordered modeling technique for non-linear dynamical systems. The proposed technique, which we refer to as the Adjoint Petrov--Galerkin (APG) method, is derived by decomposing the generalized coordinates of a dynamical system into a resolved coarse-scale set and an unresolved fine-scale set. A Markovian finite memory assumption within the Mori-Zwanzig formalism is then used to develop a reduced-order representation of the coarse-scales. This procedure leads to a closed reduced-order model that displays commonalities with the adjoint stabilization method used in  finite elements. The formulation is shown to be equivalent to a Petrov--Galerkin method with a non-linear, time-varying test basis, thus sharing some similarities with the Least-Squares Petrov--Galerkin method. %Theoretical analysis is presented on \textit{a priori} error bounds and the computational cost is estimated, taking into consideration implicit time integration and linear systems solution. 
Theoretical analysis examining \textit{a priori} error bounds and computational cost is presented. Numerical experiments on the compressible Navier-Stokes equations demonstrate that the proposed method can lead to improvements in numerical accuracy, robustness, and computational efficiency over the Galerkin method on problems of practical interest. Improvements in numerical accuracy and computational efficiency over the Least-Squares Petrov--Galerkin method are observed in most cases.
\end{abstract}

\end{frontmatter}

\section{Introduction}
High-fidelity numerical simulations play a critical role in modern-day engineering and scientific investigations. The computational cost of high-fidelity or full-order models (FOMs) is, however, often prohibitively expensive. This limitation has led to the emergence of reduced-order modeling techniques. Reduced-order models (ROMs) are formulated to \textit{approximate} solutions to a FOM on a low-dimensional manifold. Common reduced-order modeling techniques include balanced truncation~\cite{balanced_truncation_moore,balanced_truncation_roberts}, Krylov subspace techniques~\cite{krylov_rom}, reduced-basis methods~\cite{Hesthaven2016}, and the proper orthogonal decomposition approach~\cite{chatterjee_pod_intro}.
Reduced-order models based on such techniques have been implemented in a wide variety of disciplines and have been effective in reducing the computational cost associated with high-fidelity numerical simulations~\cite{kerschen_mech_pod,padhi_neural_net_pod,cao_meteorology_pod}.

Projection-based reduced-order models constructed from proper orthogonal decomposition (POD) have proved to be an effective tool for model order reduction of complex systems. In the POD-ROM approach, snapshots from a high-fidelity simulation (or experiment) are used to construct an orthonormal basis spanning the solution space. A small, truncated set of these basis vectors forms the \emph{trial} basis.
%In the POD-ROM approach, snapshots from a high-fidelity simulation (or experiment) are used to construct an orthonormal {\em trial} basis.
%The identification of the low-dimensional manifold and an appropriate projection play important roles in the effectiveness of a ROM. A popular technique to identify low-dimensional manifolds is proper orthogonal decomposition
%(POD)~\cite{chatterjee_pod_intro,berkooz_turbulence_pod,holmes_turbstruct_pod}. In the POD approach, high-fidelity solution snapshots (typically generated from the solution trajectories of the FOM) are used to construct an orthonormal {\em trial} basis. 
The POD-ROM then seeks a solution within the range of the trial basis via projection. Galerkin projection, in which the FOM equations are projected onto the same trial subspace, is the simplest type of projection. The Galerkin ROM (G ROM) has been used successfully in a variety of problems. When applied to general non-self-adjoint and non-linear problems, however, theoretical analysis and numerical experiments have shown that Galerkin ROM lacks \textit{a priori} guarantees of stability, accuracy, and convergence~\cite{rowley_pod_energyproj}. This last issue is particularly challenging as it demonstrates that enriching a ROM basis does not necessarily improve the solution~\cite{huang_combustion_roms}. The development of stable and accurate reduced-order modeling techniques for complex non-linear systems is the motivation for the current work.
\begin{comment}
Research examining the stability and accuracy of ROMs is typically approached from either a stabilization viewpoint or from a closure modeling viewpoint. 
\end{comment}

A significant body of research aimed at producing accurate and stable ROMs for complex non-linear problems exists in the literature. These efforts include, but are not limited to, ``energy-based" inner products~\cite{rowley_pod_energyproj,Kalashnikova_sand2014}, symmetry transformations~\cite{sirovich_symmetry_trans}, basis adaptation~\cite{carlberg_hadaptation,adeim_peherstorfer}, $L^1$-norm minimization~\cite{l1}, projection subspace rotations~\cite{basis_rotation}, and least-squares residual minimization approaches~\cite{bui_resmin_steady,bui_unsteady,rovas_thesis,carlberg_thesis,bui_thesis,carlberg_lspg,carlberg_lspg_v_galerkin,carlberg_gnat}. The Least-Squares Petrov--Galerkin (LSPG)~\cite{carlberg_lspg} method comprises a particularly popular least-squares residual minimization approach and has been proven to be an effective tool for non-linear model reduction. Defined at the fully-discrete level (i.e., after spatial and temporal discretization), LSPG relies on least-squares minimization of the FOM residual at each time-step. While the method lacks \textit{a priori} stability guarantees for general non-linear systems, it has been shown to be effective for complex problems of interest~\cite{carlberg_gnat, carlberg_lspg_v_galerkin, huang_scitech19}. Additionally, as it is formulated as a minimization problem, physical constraints such as conservation can be naturally incorporated into the ROM formulation~\cite{carlberg_conservative_rom}. At the fully-discrete level, LSPG is sensitive to both the time integration scheme as well as the time-step. For example, in Ref.~\cite{carlberg_lspg_v_galerkin} it was shown that LSPG produces optimal results at an intermediate time-step. Another example of this sensitivity is that, when applied to explicit time integration schemes, the LSPG approach reverts to a Galerkin approach. This limits the scope of LSPG to implicit time integration schemes, which can in turn increase the cost of the ROM~\cite{carlberg_lspg_v_galerkin}\footnote{It is possible to use LSPG with an explicit time integrator by formulating the ROM for an implicit time integration scheme, and then time integrating the resulting system with an explicit integrator.}. This is particularly relevant in the case where the optimal time-step of LSPG is small, thus requiring many time-steps of an implicit solver. Despite these challenges, the LSPG approach is arguably the most robust  technique that is used for ROMs of non-linear dynamical systems.

A second school of thought addresses stability and accuracy of ROMs from a closure modeling viewpoint. This follows from the idea that instabilities and inaccuracies in ROMs can, for the most part, be attributed to the truncated  modes. While these truncated  modes may not contain a significant portion of the system energy, they can play a significant role in the dynamics of the ROM~\cite{Wang_ROM_thesis}.  This is analogous to the closure problem encountered in large eddy simulation. Research has examined the construction of mixing length~\cite{aubry_mixlength_pod}, Smagorinsky-type~\cite{Wang_ROM_thesis,Ullmann_smag,wang_smag,smag_ROM}, and variational multiscale (VMS) closures~\cite{Wang_ROM_thesis,san_iliescu_geostrophic,Bergmann_pod_vms,Stabile2019} for POD-ROMs. The VMS approach is of particular relevance to this work. Originally developed in the context of finite element methods, VMS is a formalism to derive stabilization/closure schemes for numerical simulations of multiscale problems. The VMS procedure is centered around a sum decomposition of the solution $u$ in terms of resolved/coarse-scales $\tilde{u}$ and unresolved/fine-scales ${u}^{\prime}$. The impact of the fine-scales on the evolution of the coarse-scales is then accounted for by devising an approximation to the fine-scales. This approximation is often referred to as a ``subgrid-scale'' or ``closure'' model. 

%The extension of VMS to ROMs has been investigated by a number of researchers.
Research has examined the application of both phenomenological and residual-based subgrid-scale models to POD-ROMs. In Refs.~\cite{san_iliescu_geostrophic,iliescu_pod_eddyviscosity,iliescu_vms_pod_ns}, Iliescu and co-workers examine the construction of eddy-viscosity-based ROM closures via the VMS method. These eddy-viscosity methods are directly analogous to the eddy-viscocity philosophy used in turbulence modeling. While they do not guarantee stability \textit{a priori}, these ROMs have been shown to enhance accuracy on a variety of problems in fluid dynamics. However, as eddy-viscosity methods are based on phenomenological assumptions specific to three-dimensional turbulent flows, their scope may be limited to specific types of problems. Residual-based methods, which can also be derived from VMS, constitute a more general modeling strategy. The subgrid-scale model emerging from a residual-based method typically appears as a term that is proportional to the residual of the full-order model; if the governing equations are exactly satisfied by the ROM, then the model is inactive. While residual-based methods in ROMs are not as well-developed as they are in finite element methods, they have been explored in several contexts. In Ref.~\cite{Bergmann_pod_vms}, ROMs of the Navier-Stokes equations are stabilized using residual-based methods. This stabilization is performed by solving a ROM stabilized with a method such as streamline upwind Petrov--Galerkin (SUPG) and augmenting the POD basis with additional modes computed from the residual of the Navier-Stokes equations. In Ref.~\cite{iliescu_ciazzo_residual_rom}, residual-based stabilization is developed for velocity-pressure ROMs of the incompressible Navier-Stokes equations. Both eddy-viscosity and residual-based methods have been shown to improve ROM stability and performance. The majority of existing work on residual-based stabilization (and eddy-viscosity methods) is focused on ROMs formulated from continuous projection (i.e., projecting a continuous PDE using a continuous basis). In this instance, the ROM residual is defined at the continuous level and is directly linked to the governing partial differential equation. In many applications (arguably the majority~\cite{Kalashnikova_sand2014}), however, the ROM is constructed through discrete projection (i.e., projecting the spatially discretized PDE using a discrete basis). In this instance, the ROM residual is defined at the semi-discrete level and is tied to the \textit{spatially discretized} governing equations. Residual-based methods for ROMs developed through discrete projections have, to the best of the authors' knowledge, not been investigated.
 
Another approach that displays similarities to the variational multiscale method is the Mori-Zwanzig (MZ) formalism. Originally developed by Mori~\cite{MoriTransport} and Zwanzig~\cite{ZwanzigLangevin} and reformulated by Chorin and co-workers~\cite{ChorinOptimalPrediction,ChorinOptimalPredictionMemory,Chorin_book,ProblemReduction}, the MZ formalism is a type of model order reduction framework. The framework consists of decomposing the state variables in a dynamical system into a resolved (coarse-scale) set and an unresolved (fine-scale) set. An exact reduced-order model for the resolved scales is then derived in which the impact of the unresolved scales on the resolved scales appears as a memory term. This memory term depends on the temporal history of the resolved variables. In practice, the evaluation of this memory term is not tractable. It does, however, serve as a starting point to develop closure models. As MZ is formulated systematically in a dynamical system setting, it promises to be an effective technique for developing stable and accurate ROMs of non-linear dynamical systems. A range of research examining the MZ formalism as a multiscale modeling tool exists in the community. Most notably, Stinis and co-workers~\cite{stinisEuler,stinisHighOrderEuler,Stinis-rMZ,stinis_finitememory,PriceMZ,PriceMZ2} have developed several models for approximating the memory, including finite memory and renormalized models, and examined their application to the semi-discrete systems emerging from Fourier-Galerkin and Polynomial Chaos Expansions of Burgers' equation and the Euler equations. Application of MZ-based techniques to the classic POD-ROM approach has not been undertaken.

This manuscript leverages work that the authors have performed on the use of the MZ formalism to develop closure models of partial differential equations~\cite{parishAIAA2016,parishMZ1,parish_dtau,GouasmiMZ1,parishVMS}. In addition to focusing on the development and analysis of MZ models, the authors have examined the formulation of the MZ formalism within the context of the VMS method~\cite{parishVMS}. By expressing MZ models within a VMS framework, similarities were discovered between MZ and VMS models. In particular, it was discovered that several existing MZ models are residual-based methods. 

 The contributions of this work include:
\begin{enumerate}
\item The development of a novel projection-based reduced-order modeling technique, termed the Adjoint Petrov--Galerkin (APG) method. The method leads to a ROM equation that is driven by the residual of the discretized governing equations. The approach is equivalent to a Petrov--Galerkin ROM and displays similarities to the LSPG approach.  The method can be evolved in time with explicit integrators (in contrast to LSPG). This potentially lowers the cost of the ROM.

\item Theoretical error analysis examining conditions under which the \textit{a priori} error bounds in APG may be smaller than in the Galerkin method.

\item Computational cost analysis (in FLOPS) of the proposed APG method as compared to the Galerkin and LSPG methods. This analysis shows that the APG ROM is twice as expensive as the G ROM for a given time step, for both explicit and implicit time integrators. In the implicit case, the ability of the APG ROM to make use of Jacobian-Free Newton-Krylov methods suggests that it may be more efficient than the LSPG ROM.

\item Numerical evidence on ROMs of compressible flow problems demonstrating that the proposed method is more accurate and stable than the G ROM on problems of interest. Improvements over the LSPG ROM are observed in most cases. An analysis of the computational cost shows that the APG method can lead to lower errors than the LSPG and G ROMs for the same computational cost.

\item Theoretical results and numerical evidence that provides a relationship between the time-scale in the APG ROM and the spectral radius of the right-hand side Jacobian. Numerical evidence suggests that this relationship also applies to the selection of the optimal time-step in LSPG.

\end{enumerate}

%This work presents a residual-based reduced-order modeling approach that is inspired by the variational multiscale method and Mori-Zwanzig formalism. Unlike the VMS-ROMs presented in~\cite{1,2,3,4}, the proposed approach is applied at the semi-discrete level. 

The structure of this paper is as follows: Section~\ref{sec:FOM}  outlines the full-order model of interest and its formulation in generalized coordinates. Section~\ref{sec:ROM}  outlines the reduced-order modeling approach applied at the semi-discrete level. Galerkin, Petrov--Galerkin, and VMS ROMs will be discussed. Section~\ref{sec:MZ} details the Mori-Zwanzig formalism and the construction of the Adjoint Petrov--Galerkin ROM. Section~\ref{sec:analysis} provides theoretical error analysis. Section~\ref{sec:cost} discusses the implementation and computational cost of the Adjoint Petrov--Galerkin method. Numerical results and comparisons with Galerkin and LSPG ROMs are presented in Section~\ref{sec:numerical}. Conclusions are provided in Section~\ref{sec:conclude}.

Mathematical notation in this manuscript is as follows: matrices are written as bold uppercase letters (e.g. $\vfullvec$), vectors as lowercase bold letters (e.g. $\ufullvec$), and scalars as italicized lowercase letters (e.g. $a_i$). Calligraphic script may denote vector spaces or special operators (e.g. $\MC{V}$, $\MC{L}$). Bold letters followed by parentheses indicate a matrix or vector function (e.g. $\mathbf{R}(\cdot)$, $\ufullvec (\cdot)$), and those followed by brackets indicate a linearization about the bracketed argument (e.g. $\mathbf{J}[\cdot]$).

%========================================================================
\section{Full-Order Model and Generalized Coordinates}\label{sec:FOM}
%We start by describing the full-order system and defining the relevant vector spaces and basis vectors that will be used in the formulation of the reduced-order model.
Consider a full-order model that is described by the dynamical system,
\begin{equation}\label{eq:FOM}
\frac{d }{dt}\ufullvec(t) = \mathbf{R}(\ufullvec(t)), \qquad \ufullvec(0) = \ufullvec_0, \qquad t \in [0,T], 
\end{equation}
where $T \in \mathbb{R}^+$ denotes the final time, $\ufullvec : [0,T] \rightarrow \RR{N}$ denotes the state, and $\ufullvec_0 \in \mathbb{R}^N$ the initial conditions. The function $\mathbf{R}: \mathbb{R}^N  \rightarrow \mathbb{R}^N$ with $\statedummy \mapsto \RHS(\statedummy)$ is a (possibly non-linear) function and will be referred to as the ``right-hand side" operator. Equation~\ref{eq:FOM} arises in many disciplines, including the numerical discretization of partial differential equations. In this context, $\mathbf{R}(\cdot)$ may represent a spatial discretization scheme with source terms and applicable boundary conditions.  

In many practical applications, the computational cost associated with solving Eq.~\ref{eq:FOM} is prohibitively expensive due to the high dimension of the state. The goal of a ROM is to transform the $N$-dimensional dynamical system presented in Eq.~\ref{eq:FOM} into a $K$ dimensional dynamical system, with $K \ll N$. To achieve this goal, we pursue the following agenda:
\begin{enumerate}
\item Develop a weak form of the FOM in generalized coordinates.
\item Decompose the generalized coordinates into a $K$-dimensional resolved coarse-scale set and an $N-K$ dimensional unresolved fine-scale set.
\item Develop a $K$-dimensional ROM for the coarse-scales by making approximations to the fine-scale coordinates.
\end{enumerate}
The remainder of this section will address task 1 in the above agenda.

%To develop the weak form of Eq.~\ref{eq:FOM}, we define two vector spaces,
%\begin{equation*}
%\MC{V} \trieq \mathbb{R}^N, \qquad
%\MC{W} \trieq \mathbb{R}^N.
%\end{equation*}
%The vector space $\MC{V}$ is referred to as the trial space, while the vector space $\MC{W}$ is referred to as the %test space. Note that we have defined the test and trial spaces to have the same dimension as the FOM. Dimension %reduction will be presented in subsequent sections.

To develop the weak form of Eq.~\ref{eq:FOM}, we start by defining a trial basis matrix comprising $N$ orthonormal basis vectors,
%We now define a matrix of whose columns comprise linearly independent orthonormal basis vectors that span $\MC{V}$,
\begin{equation*}
\mathbf{V} \equiv \begin{bmatrix}
\mathbf{v}_1 & \mathbf{v}_2 & \cdots & \mathbf{v}_N
\end{bmatrix},
%\qquad 
%\Vfull \equiv \text{Range}(\vfullvec) ,
\end{equation*}
where $\mathbf{v}_i \in \mathbb{R}^N$, $\mathbf{v}_i^T \mathbf{v}_j = \delta_{ij}$.
The basis vectors may be generated, for example, by the POD approach. 
We next define the \textit{trial space} as the range of the trial basis matrix,
$$\Vfull \defeq \text{Range}(\vfullvec).$$
As $\mathbf{V}$ is a full-rank $N \times N$ matrix, $\MC{V} \equiv \RR{N}$ and the state variable can be exactly described by a linear combination of these basis vectors,
%By definition, the set of basis vectors $\mathbf{v}_i$ form a complete basis for $\MC{V}$. As a result, the state variable can be exactly described by a linear combination of these basis vectors,
\begin{equation}\label{eq:genCord}
\ufullvec(t) = \sum_{i=1}^N \mathbf{v}_i a_i(t).  
\end{equation}
Following~\cite{carlberg_lspg}, we collect the basis coefficients $a_i(t)$ into $\afullvec : [0,T] \rightarrow \RR{N}$ and refer to $\afullvec$ as the \emph{generalized coordinates}. We similarly define the test basis matrix, $\Wfullvec$, whose columns comprise linearly independent basis vectors that span the \textit{test space}, $\MC{W}$,
\begin{equation*}
\Wfullvec \equiv \begin{bmatrix}
\mathbf{w}_1 & \mathbf{w}_2 & \cdots& \mathbf{w}_N
\end{bmatrix}, \qquad 
\MC{W} \defeq \text{Range}(\Wfullvec) ,
\end{equation*}
with $\mathbf{w}_i \in \mathbb{R}^N$. 

Equation~\ref{eq:FOM} can be expressed in terms of the generalized coordinates by inserting Eq.~\ref{eq:genCord} into Eq.~\ref{eq:FOM}, 
\begin{equation}\label{eq:FOM2}
\vfullvec \frac{d }{dt}\afullvec(t)= \mathbf{R}(\vfullvec \afullvec(t)).
\end{equation}
The weak form of Eq.~\ref{eq:FOM2} is obtained by taking the $L^2$ inner product with $\Wfullvec$,\footnote{The authors recognize that many types of inner products are possible in formulating a ROM. To avoid unnecessary abstraction, we focus here on the simplest case.}
\begin{equation}\label{eq:FOM3}
\Wfullvec^T  \vfullvec \frac{d }{dt}\afullvec(t)  = \Wfullvec^T  \mathbf{R}(\vfullvec \afullvec(t)).
\end{equation}
Manipulation of Eq.~\ref{eq:FOM3} yields the following dynamical system,
\begin{equation}\label{eq:FOM_generalized}
\frac{d }{dt}\afullvec(t) =  [\Wfullvec^T \vfullvec ]^{-1} \Wfullvec^T \mathbf{R}(\vfullvec \afullvec(t)), \qquad \afullvec(t=0) = \afullvec_0, \qquad t \in [0,T],
\end{equation}
where $\afullvec_0 \in \RR{N}$ with $\afullvec_0 =  [\Wfullvec^T \vfullvec ]^{-1} \Wfullvec^T \ufullvec_0.$ 
Note that Eq.~\ref{eq:FOM_generalized} is an $N$-dimensional ODE system and is simply Eq.~\ref{eq:FOM} expressed in a different coordinate system. It is further worth noting that, since $\Wfullvec$ and $\vfullvec$ are invertible (both are square matrices with linearly independent columns), one has $[\Wfullvec^T \vfullvec ]^{-1} \Wfullvec^T = \vfullvec^{-1}.$ This will not be the case for ROMs.

\section{Reduced-Order Models}\label{sec:ROM}
\subsection{Multiscale Formulation}
This subsection addresses task 2 in the aforementioned agenda. Reduced-order models seek a low-dimensional representation of the original high-fidelity model. To achieve this, we examine a multiscale formulation of Eq.~\ref{eq:FOM_generalized}. Consider sum decompositions of the trial and test space,
\begin{equation}
\Vfull = \Vcoarse \oplus \Vfine, \qquad \MC{W} = \MC{\tilde{W}} \oplus \MC{W}'.
\end{equation}
The space $\Vcoarse$ is referred to as the coarse-scale trial space, while $\Vfine$ is referred to as the fine-scale trial space. We refer to $\Wcoarse$ and $\Wfine$ in a similar fashion.
For simplicity, define $\Vcoarse$ to be the column space of the first $K$ basis vectors in $\vfullvec$ and $\Vfine$ to be the column space of the last $N-K$ basis vectors in $\vfullvec$. This approach is appropriate when the basis vectors are ordered in a hierarchical manner, as is the case with POD. Note the following properties of the decomposition:
\begin{enumerate}
\item The coarse-scale space is a subspace of $\MC{V}$, i.e., $\Vcoarse \subset \Vfull$.
\item The fine-scale space is a subspace of $\MC{V}$, i.e., $\Vfine \subset \Vfull$.
\item The fine and coarse-scale subspaces do not overlap, i.e.,  $\Vcoarse \cap \Vfine = \{ 0 \}.$
\item The fine-scale and coarse-scale subspaces are orthogonal, i.e., $\Vcoarse \perp \Vfine.$ This is due to the fact that the basis vectors that comprise $\mathbf{V}$ are orthonormal.
\end{enumerate}
For notational purposes, we make the following definitions for the trial and test spaces:
\begin{equation*}
\vfullvec \equiv \begin{bmatrix} \vcoarsevec & ; & \vfinevec \end{bmatrix}, \quad \Wfullvec \equiv \begin{bmatrix} \Wcoarsevec & ; & \Wfinevec \end{bmatrix}, 
\end{equation*}
where $[ \cdot \hspace{0.05 in}; \hspace{0.05 in} \cdot ]$ denotes the concatenation of two matrices and,
\begin{alignat*}{2}
&\vcoarsevec \equiv \begin{bmatrix}
\mathbf{v}_1 & \mathbf{v}_2 & \cdots & \mathbf{v}_K
\end{bmatrix},  && \quad \Vcoarse \defeq \text{Range}(\vcoarsevec) , \\
&\vfinevec  \equiv \begin{bmatrix}
\mathbf{v}_{K+1} & \mathbf{v}_{K+2} & \cdots &  \mathbf{v}_{N}
\end{bmatrix}, && \quad  \Vfine \defeq \text{Range}(\vfinevec). \\
&\Wcoarsevec \equiv \begin{bmatrix}
\mathbf{w}_1 & \mathbf{w}_2 & \cdots & \mathbf{w}_K
\end{bmatrix},  && \quad \Wcoarse \defeq \text{Range}(\Wcoarsevec) , \\
&\Wfinevec  \equiv \begin{bmatrix}
\mathbf{w}_{K+1} & \mathbf{w}_{K+2} & \cdots & \mathbf{w}_{N}
\end{bmatrix}, && \quad \Wfine \defeq \text{Range}(\Wfinevec) .
\end{alignat*}
The coarse and fine-scale states are defined as,
\begin{equation*}
\ucoarsevec(t) \defeq \sum_{i=1}^K \mathbf{v}_i a_i(t) \equiv \vcoarsevec \acoarsevec(t), \qquad \ufinevec(t) \defeq \sum_{i = K+1}^N \mathbf{v}_i a_i(t) \equiv \vfinevec \afinevec(t),
\end{equation*}
with $\ucoarsevec : [0,T] \rightarrow \tilde{\MC{V}}$, $\ufinevec : [0,T] \rightarrow \MC{V}', \acoarsevec: [0,T] \rightarrow \RR{K},$ and $\afinevec : [0,T] \rightarrow \RR{N-K}.$
\begin{comment}
We make similar definitions for the test space,
\begin{equation*}
\Wfullvec \trieq \begin{bmatrix} \Wcoarsevec & ; & \Wfinevec \end{bmatrix},   
\end{equation*}
where,
\begin{alignat*}{2}
&\Wcoarsevec \trieq \begin{bmatrix}
\mathbf{w}_1, \mathbf{w}_2, \hdots, \mathbf{w}_K
\end{bmatrix},  && \quad \text{Range}(\Wcoarsevec) \trieq \Wcoarse, \\
&\Wfinevec  \trieq \begin{bmatrix}
\mathbf{w}_{K+1}, \mathbf{w}_{K+2}, \hdots, \mathbf{w}_{N}
\end{bmatrix}, && \quad \text{Range}(\Wfinevec)\trieq \Wfine.
\end{alignat*}
\end{comment}
%$$\Wcoarsevec \trieq \begin{Bmatrix}
%\mathbf{w}_1, \mathbf{w}_2, \hdots, \mathbf{w}_K
%\end{Bmatrix}, \qquad \text{Range}(\Wcoarsevec) \trieq \Wcoarse,
%$$
%$$
%\Wfinevec  \trieq \begin{Bmatrix}
%\mathbf{w}_{K+1}, \mathbf{w}_{K+2}, \hdots, \mathbf{w}_{N}
%\end{Bmatrix}, \qquad \text{Range}(\Wfinevec)\trieq \Wfine.$$
These decompositions allow Eq.~\ref{eq:FOM3} to be expressed as two linearly independent systems,
\begin{equation}\label{eq:FOM_VMS_coarse}
\Wcoarsevec^T \vcoarsevec \frac{d }{dt}\acoarsevec(t) + \Wcoarsevec^T \vfinevec \frac{d }{dt}\afinevec(t) =\Wcoarsevec^T  \mathbf{R}(\vcoarsevec \acoarsevec(t) + \vfinevec \afinevec(t)),
\end{equation}
\begin{equation}\label{eq:FOM_VMS_fine}
\Wfinevec^T \vcoarsevec \frac{d }{dt} \acoarsevec(t) + \Wfinevec^T \vfinevec \frac{d }{dt}\afinevec(t) =\Wfinevec^T  \mathbf{R}(\vcoarsevec \acoarsevec(t) + \vfinevec \afinevec(t)).
\end{equation}
Equation~\ref{eq:FOM_VMS_coarse} is referred to as the coarse-scale equation, while Eq.~\ref{eq:FOM_VMS_fine} is referred to as the fine-scale equation. It is important to emphasize that the system formed by Eqs.~\ref{eq:FOM_VMS_coarse} and~\ref{eq:FOM_VMS_fine} is still an exact representation of the original FOM.

The objective of ROMs is to solve the coarse-scale equation. The challenge encountered in this objective is that the evolution of the coarse-scales depends on the fine-scales. This is a type of ``closure problem" and must be addressed to develop a closed ROM.

\subsection{Reduced-Order Models}
As noted above, the objective of a ROM is to solve the (unclosed) coarse-scale equation. We now develop ROMs of Eq.~\ref{eq:FOM} by leveraging the multiscale decomposition presented above. This section addresses task 3 in the mathematical agenda.

 The most straightforward technique to develop a ROM is to make the approximation,
\begin{equation*}\label{eq:ufine_ansatz}
\ufinevec \approx \mathbf{0}.
\end{equation*}
This allows for the coarse-scale equation to be expressed as,
\begin{equation}\label{eq:FOM_VMS_coarse_2}
\Wcoarsevec^T \vcoarsevec \frac{d }{dt}\acoarsevec(t) =\Wcoarsevec^T  \mathbf{R}(\vcoarsevec \acoarsevec(t) ).
\end{equation}
Equation~\ref{eq:FOM_VMS_coarse_2} forms a $K$-dimensional reduced-order system (with $K \ll N$) and provides the starting point for formulating several standard ROM techniques. The Galerkin and Least-Squares Petrov--Galerkin ROMs are outlined in the subsequent subsections.

\subsubsection{The Galerkin Reduced-Order Model}
Galerkin projection is a common choice for producing a reduced set of ODEs. In Galerkin projection, the test basis is taken to be equivalent to the trial basis, i.e.  $\Wcoarsevec = \vcoarsevec$. The Galerkin ROM is then,
\begin{equation}\label{eq:GROM}
\vcoarsevec^T \frac{d }{dt}\ucoarsevec(t) = \vcoarsevec^T \mathbf{R}(\ucoarsevec(t)), \qquad \ucoarsevec(0) = \ucoarsevec_0, \qquad t \in [0,T].
\end{equation}
Galerkin projection can be shown to be optimal in the sense that it minimizes the $L^2$-norm of the FOM ODE residual over $\text{Range}(\vcoarsevec)$~\cite{carlberg_lspg}. As the columns of $\vcoarsevec$ no longer spans $\Vfull$, it is possible that the initial state of the full system, $\ufullvec_0$, may differ from the initial state of the reduced system, $\ucoarsevec_0$. For simplicity, however, it is assumed here that the initial conditions lie fully in the coarse-scale trial space, i.e. $\ufullvec_0 \in \Vcoarse$ such that,
\begin{equation}\label{eq:ROM_IC}
\ucoarsevec_0 = \ufullvec_0.
\end{equation}
 Note that this issue can be formally addressed by using an affine trial space to ensure that $\ucoarsevec_0 = \ufullvec_0$.

Equation~\ref{eq:GROM} can be equivalently written for the generalized coarse-scale coordinates, $\acoarsevec$,
\begin{equation}\label{eq:GROM_modal}
\frac{d }{dt}\acoarsevec(t) = \vcoarsevec^T \mathbf{R}(\vcoarsevec \acoarsevec(t)), \qquad \acoarsevec(0) = \acoarsevec_0, \qquad t \in [0,T],
\end{equation}
where $\acoarsevec_0 \in \RR{K}$ with  $\acoarsevec_0 =\vcoarsevec^T \ufullvec_0.$
Equation~\ref{eq:GROM_modal} is a $K$-dimensional ODE system (with $K\ll N$) and is hence of lower dimension than the FOM.  Note that, similar to Eq.~\ref{eq:FOM_generalized}, the projection via $\vcoarsevec^T$ would normally be $\big[ \vcoarsevec^T \vcoarsevec \big]^{-1} \vcoarsevec$. When $\vcoarsevec$ is constructed via POD, its columns are orthonormal and $\vcoarsevec^T \vcoarsevec = \mathbf{I}$; the projector has been simplified to reflect this. Non-orthonormal basis vectors will require the full computation of $\big[ \vcoarsevec^T \vcoarsevec \big]^{-1} \vcoarsevec$.

It is important to note that, in order to develop a computationally efficient ROM, some ``hyper-reduction'' method must be devised to reduce the cost associated with evaluating the matrix-vector product, $\vcoarsevec^T \mathbf{R}(\ucoarsevec(t))$. Gappy POD~\cite{everson_sirovich_gappy} and the (discrete) empirical interpolation method~\cite{eim,deim} are two such techniques. More details on hyper-reduction are provided in Appendix~\ref{appendix:hyper}.% As the focus of this study is on the formulation of the ROM, however, these reduction techniques will not be considered in this manuscript. 

When applied to unsteady non-linear problems, the Galerkin ROM is often inaccurate and, at times, unstable. Examples of this are seen in Ref.~\cite{carlberg_lspg_v_galerkin}. These issues motivate the development of more sophisticated reduced-order modeling techniques. 

\subsubsection{Petrov--Galerkin and Least-Squares Petrov--Galerkin Reduced-Order Models}
In the Petrov--Galerkin approach, the test space is  different from the trial space. Petrov--Galerkin approaches have a rich history in the finite element community~\cite{brooks_supg,hughes_petrovgalerkin} and can enhance the stability and robustness of a numerical method. In the context of reduced-order modeling for dynamical systems, the Least-Squares Petrov--Galerkin method (LSPG)~\cite{carlberg_lspg} is a popular approach. The LSPG approach is a ROM technique that seeks to minimize the fully discrete residual (i.e., the residual after spatial and temporal discretization) at each time-step. The LSPG method can be shown to be optimal in the sense that it minimizes the $L^2$-norm of the \textit{fully discrete} residual at each time-step over $\text{Range}(\Vcoarse)$. To illustrate the LSPG method, consider the algebraic system of equations for the FOM obtained after an implicit Euler temporal discretization,
\begin{equation}\label{eq:coarse_implicit_euler_0}
\frac{\ufullvec^{n} - \ufullvec^{n-1}  }{\Delta t} - \mathbf{R}(\ufullvec^{n}) = \mathbf{0},
\end{equation}
where $\ufullvec^n \in \RR{N}$ denotes the solution at the $n^{th}$ time-step.
The FOM will exactly satisfy Eq.~\ref{eq:coarse_implicit_euler_0}. The ROM, however, will not. The LSPG method minimizes the residual of Eq.~\ref{eq:coarse_implicit_euler_0} over each time-step. For notational purposes, we define the residual vector for the implicit Euler method,
\begin{equation*}
\mathbf{r}_{\text{IE}}: (\mathbf{y};\ufullvec^{n-1}) \mapsto  \frac{\mathbf{y} - \ufullvec^{n-1}  }{\Delta t} -  \mathbf{R}(\mathbf{y}).
\end{equation*}
The LSPG method is defined as follows,
\begin{equation*}
\ufullvec^n = \underset{\mathbf{y} \in \text{Range}(\vcoarsevec)  }{\text{arg min}}|| \mathbf{A}(\mathbf{y}) \mathbf{r}_{\text{IE}}(\mathbf{y};\ufullvec^{n-1}) ||_2^2,
\end{equation*}
where $\mathbf{A}(\cdot) \in \mathbb{R}^{z \times n}$ with $z \le N$ is a weighting matrix. The standard LSPG method takes $\mathbf{A} = \mathbf{I}$. For the implicit Euler time integration scheme (as well as various other implicit schemes) the LSPG approach can be shown to have an equivalent continuous representation using a Petrov--Galerkin projection~\cite{carlberg_lspg_v_galerkin}. For example, the LSPG method for any backward differentiation formula (BDF) time integration scheme can be written as a Petrov--Galerkin ROM with the test basis,
\begin{equation*}
\mathbf{\Wcoarsevec} =\big( \mathbf{I} - \alpha \Delta t \mathbf{J}[\ucoarsevec(t)] \big) \vcoarsevec,
\end{equation*}
where $\mathbf{J}[\ucoarsevec(t)] = \frac{\partial \RHS}{\partial \statedummy}(\ucoarsevec(t))$ is the Jacobian of the right-hand side function evaluated about the coarse-scale state and $\alpha$ is a constant, specific to a given scheme (e.g. $\alpha = 1$ for implicit Euler, $\alpha = \frac{2}{3}$ for BDF2, $\alpha = \frac{6}{11}$ for BDF3, etc.).
With this test basis, the LSPG ROM can be written as,
\begin{equation}\label{eq:LSPGROM}
 \vcoarsevec^T \bigg( \frac{d }{dt}\ucoarsevec(t) -   \mathbf{R}(\ucoarsevec(t)) \bigg) = \vcoarsevec^T \mathbf{J}^T [\ucoarsevec(t)]  \alpha  \Delta t\bigg(  \frac{d }{dt}\ucoarsevec(t) - \mathbf{R}(\ucoarsevec(t)) \bigg)  , \qquad \ucoarsevec(0) = \ucoarsevec_0, \qquad t \in [0,T].
\end{equation}
In writing Eq.~\ref{eq:LSPGROM}, we have coupled all of the terms from the standard Galerkin ROM on the left-hand side, and have similarly coupled the terms introduced by the Petrov--Galerkin projection on the right-hand side. One immediately observes that the LSPG approach is a residual-based method, meaning that the stabilization added by LSPG is proportional to the residual. The LSPG method is similar to the Galerkin/Least-Squares (GLS) approach commonly employed in the finite element community~\cite{hughes_GLS,hughes0}. This can be made apparent by writing Eq.~\ref{eq:LSPGROM} as,
\begin{equation*}
 \bigg(  \mathbf{v}_i ,  \frac{d }{dt}\ucoarsevec(t) -   \mathbf{R}(\ucoarsevec(t))   \bigg)= \bigg(\mathbf{J}  [\ucoarsevec(t)] \mathbf{v}_i   , \tau \big[  \frac{d }{dt}\ucoarsevec(t) - \mathbf{R}(\ucoarsevec(t)) \big] \bigg) , \qquad i = 1,2,\hdots,K,  
\end{equation*}
where $(\mathbf{a},\mathbf{b}) = \mathbf{a}^T \mathbf{b}$ and $\tau = \alpha \Delta t$ is the stabilization parameter. Compare the above to, say, Eq. 70 and 71 in Ref.~\cite{hughes0}. A rich body of literature exists on residual-based methods, and viewing the LSPG approach in this light helps establish connections with other methods.
We highlight several important aspects of LSPG. Remarks 1 through 3 are derived by Carlberg et al. in Ref.~\cite{carlberg_lspg_v_galerkin}:
\begin{enumerate}
\item The LSPG approach is inherently tied to the temporal discretization. For different time integration schemes, the ``stabilization" added by the LSPG method will vary. For optimal accuracy, the LSPG method requires an intermediary time-step size.
\item In the limit of $\Delta t \rightarrow 0$, the LSPG approach recovers a Galerkin approach.
\item For explicit time integration schemes, the LSPG and Galerkin approach are equivalent.
\item For backwards differentiation schemes, the LSPG approach is a type of GLS stabilization for non-linear problems. 
\item While commonalities exist between LSPG and multiscale approaches, the authors believe that the LSPG method should \textit{not} be viewed as a subgrid-scale model. The reason for this is that it is unclear how Eq.~\ref{eq:LSPGROM} can be derived from Eq.~\ref{eq:FOM_VMS_coarse}. This is similar to the fact that, in Ref.~\cite{hughes0}, \textit{adjoint} stabilization is viewed as a subgrid-scale model while GLS stabilization is not.  The challenge in deriving Eq.~\ref{eq:LSPGROM} from Eq.~\ref{eq:FOM_VMS_coarse} lies primarily in the fact that the Jacobian in Eq.~\ref{eq:LSPGROM} contains a transpose operator. We thus view LSPG as mathematical stabilization rather than a subgrid-scale model.
\end{enumerate}
While the LSPG approach has enjoyed much success for constructing ROMs of non-linear problems, remarks 1, 2, 3, and 5 suggest that improvements over the LSPG method are possible. Remark 1 suggests improvements in computational speed and accuracy are possible by removing sensitivity to the time-step size. Remark 3 suggests that improvements in computational speed and flexibility are possible by formulating a method that can be used with explicit time-stepping schemes. Lastly, remark 5 suggests that improvements in accuracy are possible by formulating a method that accounts for subgrid effects.

\subsection{Mori-Zwanzig Reduced-Order Models}\label{sec:MZ}
The optimal prediction framework formulated by Chorin et al.~\cite{ChorinOptimalPrediction,ChorinOptimalPredictionMemory,Chorin_book}, which is a (significant) reformulation of the Mori-Zwanzig (MZ) formalism of statistical mechanics, is a model order reduction tool that can be used to develop representations of the impact of the fine-scales on the coarse-scale dynamics. In this section, the optimal prediction framework is used to derive a compact approximation to the impact of the fine-scale POD modes on the evolution of the coarse-scale POD modes. For completeness, the optimal prediction framework is first derived in the context of the Galerkin POD ROM. It is emphasized that the content presented in Sections~\ref{sec:liouville} and \ref{sec:projOpsLangevin} is simply a formulation of Chorin's framework, with a specific projection operator, in the context of the Galerkin POD ROM. 

%The details of Sections~\ref{sec:liouville} and~\ref{sec:projOpsLangevin} reveal the mathematical exactness with which the formalism is used, and are necessary to understand the nuances of the proposed ROM method. However, they are not necessary to broadly understand the final reduced-order model; those unfamiliar with the statistical mechanics background from which the MZ formalism originates are directed to a brief summary at the end of Section~\ref{sec:projOpsLangevin}. The formalism is admittedly complex, but curious readers are encouraged to carefully step through the derivation or seek other resources on the topic, such as Ref.~\cite{ChorinOptimalPrediction}.

We pursue the MZ approach on a Galerkin formulation of Eq.~\ref{eq:FOM_generalized}. Before describing the formalism, it is beneficial to re-write the original FOM in terms of the generalized coordinates with the solution being defined implicitly as a function of the initial conditions,
\begin{equation}\label{eq:FOM_generalized_b}
\frac{d }{dt} \afullvec (\afullvec_0,t) =  \vfullvec^T \mathbf{R}(\vfullvec \afullvec (\afullvec_0,t)), \qquad \afullvec(0) = \afullvec_0, \qquad t \in [0,T],
\end{equation}
with $\afullvec: \mathbb{R}^N \times [0,T] \rightarrow \mathbb{R}^N$, $\afullvec \in \RR{N} \otimes \RRC{N} \otimes \timeSpace$ the time-dependent generalized coordinates, $\RRC{N}$ the space of (sufficiently smooth) functions acting on $\RR{N}$, $\timeSpace$ the space of (sufficiently smooth) functions acting on $[0,T]$, and $\afullvec_0 \in \mathbb{R}^N$ the initial conditions. Here, $\afullvec(\afullvec_0,t)$ is viewed as a function that maps from the coordinates $\afullvec_0$ (i.e., the initial conditions) and time to a vector in $\mathbb{R}^N$. It is assumed that the right-hand side operator $\mathbf{R}$ is continuously differentiable on $\mathbb{R}^N$.
\subsubsection{The Liouville Equation}\label{sec:liouville}
The starting point of the MZ approach is to transform the non-linear FOM (Eq.~\ref{eq:FOM_generalized_b}) into a linear partial differential equation. 
%Let $\MC{H} \in R^N$  be an infinite domain with $N$ spatial dimensions corresponding to the phase space of Eq.~\ref{eq:FOM_generalized_b}\footnote{The notation $R^N$ is used to denote an $N$-dimensional spatial domain, while $\mathbb{R}^N$ is used to denote an $N$-dimensional vector space.}. 
Equation~\ref{eq:FOM_generalized_b} can be written equivalently as the following partial differential equation in $\PDEspace$~\cite{ChorinOptimalPredictionMemory},
\begin{equation}\label{eq:Liouville}
\frac{\partial }{\partial t}v(\afullvec_0,t) = \MC{L} v(\afullvec_0,t); \qquad
v(\mathbf{a_0},0) = g(\mathbf{a}_0),
\end{equation}
where $v:  \mathbb{R}^N  \times [0,T] \rightarrow \mathbb{R}^{N_v}$ with $v \in \RR{N_v} \otimes \RRC{N} \otimes \timeSpace$ is a set of $N_v$ observables and $g: \mathbb{R}^N  \rightarrow \mathbb{R}^{N_v}$ is a state-to-observable map. The operator $\MC{L}$ is the Liouville operator, also known as the Lie derivative, and is defined by,
\begin{align*}
\MC{L} &: \vdummy \mapsto \bigg[ \frac{\partial  }{\partial \mathbf{a_0}} \vdummy \bigg] \vfullvec^T \mathbf{R}( \vfullvec \mathbf{a_0} ),\\
       &: \RR{q} \otimes \RRC{N} \rightarrow \RR{q} \otimes \RRC{N},
\end{align*}
for arbitrary q.
Equation~\ref{eq:Liouville} is referred to as the Liouville equation and is an exact statement of the original dynamics. The Liouville equation describes the solution to Eq.~\ref{eq:FOM_generalized_b} for \textit{all} possible initial conditions. The advantage of reformulating the system in this way is that the Liouville equation is linear, allowing for the use of superposition and aiding in the removal of the fine-scales.

The solution to Eq.~\ref{eq:Liouville} can be written as,
\begin{equation*}
v(\mathbf{a_0},t) = e^{t \MC{L}}  g(\mathbf{a_0}) .
\end{equation*}
The operator $e^{t \MC{L}}$, which has been referred to as a ``propagator", evolves the solution along its trajectory in phase-space~\cite{ZwanzigBook}. The operator $e^{t \MC{L}}$ has several interesting properties. Most notably, the operator can be ``pulled" inside of a non-linear functional~\cite{ZwanzigBook},
\begin{equation*}
e^{t \MC{L}}  g(\mathbf{a}_0) =   g( e^{t \MC{L}} \mathbf{a}_0).
\end{equation*}
This is similar to the composition property inherent to Koopman operators~\cite{Koopman}. With this property, the solution to Eq.~\ref{eq:Liouville} may be written as,
\begin{equation*}
v(\mathbf{a_0},t) =  g( e^{t \MC{L}}  \mathbf{a}_0).
\end{equation*}%=  g(\mathbf{a}(\mathbf{a_0},t)) .$$
The implications of $e^{t \MC{L}}$ are significant. It demonstrates that, given trajectories $\mathbf{a}(\mathbf{a_0},t)$, the solution $v$ is known for any observable $g$. 

Noting that $\MC{L}$ and $e^{t \MC{L}}$ commute, Eq.~\ref{eq:Liouville} may be written as,
\begin{equation*}
\frac{\partial }{\partial t} v(\xfullvec,t) = e^{t \MC{L}}  \MC{L} v(\xfullvec,0).
\end{equation*}
A set of partial differential equations for the resolved generalized coordinates can be obtained by taking $g(\mathbf{a_0}) = \acoarsevec_0$,
\begin{equation}\label{eq:Liouville_sg_res}
\frac{\partial }{\partial t}  e^{t \MC{L}} \acoarsevec_0 = e^{t \MC{L}}  \MC{L}  \acoarsevec_0.
\end{equation}
The remainder of the derivation is performed for $g(\mathbf{a_0}) = \acoarsevec_0$, thus $N_v = K$. 
\subsubsection{Projection Operators and the Generalized Langevin Equation}\label{sec:projOpsLangevin}
The objective now is to remove the dependence of Eq.~\ref{eq:Liouville_sg_res} on the fine-scale variables. %For the space of functions on $\icspace$, define the inner product,
%\begin{equation}\label{eq:H_innerprod}
%    (f,g)_{\icspace} = \int_{\icspace} f(\afullvec_0)g(\afullvec_0) \delta(\afinevec_0) %d \afullvec_0.
%\end{equation}
Similar to the VMS decomposition, $\icspace$ can be decomposed into resolved and unresolved subspaces,
\begin{equation*}
\icspace = \icspacecoarse \oplus \icspacefine,
\end{equation*} 
with $\icspacecoarse$ being the space of all functions of the resolved coordinates, $\xcoarsevec$, and $\icspacefine$ the complementary space.
The associated projection operators are defined as $\mathcal{P}: \icspace \rightarrow \icspacecoarse$ and $\mathcal{Q} = I - \mathcal{P}$. Various types of projections are possible, and here we consider,
\begin{equation*}
\MC{P}f( \afullvec_0 ) = \int_{\RR{N}} f( \afullvec_0 ) \delta(\afinevec_0) d \afinevec_0,
\end{equation*}
which leads to
\begin{equation*}
\MC{P}f(\afullvec_0 )= f([\acoarsevec_0;\mathbf{0}]).
\end{equation*}
%Note the $\MC{P}$ and $\MC{Q}$ are orthogonal with respect to the inner product defined in Eq.~\ref{eq:H_innerprod}. 

The projection operators can be used to split the Liouville equation,
\begin{equation}\label{eq:Liouville_sg_split}
\frac{\partial }{\partial t}  e^{t \MC{L}} \xcoarsevec = e^{t \MC{L}}  \MC{PL} \xcoarsevec +  e^{t \MC{L}}\MC{QL}\xcoarsevec.
\end{equation}
The objective now is to remove the dependence of the right-hand side of Eq.~\ref{eq:Liouville_sg_split} on the fine-scales, $\xfinevec$ (i.e. $\MC{QL}\xcoarsevec$). This may be achieved by Duhamel's principle,
\begin{equation}\label{eq:duhamel}
e^{t \mathcal{L}} = e^{t \mathcal{Q} \mathcal{L}} + \int_0^t e^{(t - s)\mathcal{L}} \mathcal{P}\mathcal{L} e^{s \mathcal{Q} \mathcal{L}} ds.
\end{equation}
Inserting Eq.~\ref{eq:duhamel} into Eq.~\ref{eq:Liouville_sg_split}, the generalized Langevin equation is obtained,
\begin{equation}\label{eq:MZ_Identity}
\frac{\partial }{\partial t}  e^{t \MC{L}} \xcoarsevec =   \underbrace{e^{t\MC{L}}\MC{PL} \xcoarsevec}_{\text{Markovian}} +   \underbrace{e^{t\MC{QL}}\MC{QL}  \xcoarsevec}_{\text{Noise}} + 
 \underbrace{ \int_0^t e^{{(t - s)}\mathcal{L}} \mathcal{P}\mathcal{L} e^{s \mathcal{Q} \mathcal{L}} \MC{QL}\xcoarsevec ds}_{\text{Memory}}.
\end{equation}
By the definition of the initial conditions (Eq.~\ref{eq:ROM_IC}), the noise-term is zero and we obtain,
\begin{equation}\label{eq:MZ_Identity3}
\frac{\partial}{\partial t}    e^{t \MC{L}} \xcoarsevec = e^{t\MC{L}}\MC{PL}  \xcoarsevec+   \int_0^t e^{{(t - s)}\mathcal{L}} \mathcal{P}\mathcal{L} e^{s \mathcal{Q} \mathcal{L}} \MC{QL}  \xcoarsevec ds.
\end{equation}
The system described in Eq.~\ref{eq:MZ_Identity} is precise and not an approximation to the original ODE system. For notational purposes, define,
%where $\mathbf{F}: \mathbb{R}^K \times [0,T] \rightarrow \mathbb{R}^K$ and $\$
%\begin{equation*}
%\mathbf{F}(\acoarsevec,t) = e^{t\MC{QL}} \mathbf{F}_0, \qquad %\mathbf{F}_0(\acoarsevec_0,0) =  \MC{QL}\acoarsevec_0,
%\end{equation*}
%where $\mathbf{F}_0: \mathbb{R}^K \subseteq \icspacecoarse \rightarrow \mathbb{R}^K %\subseteq \icspacefine$
%\begin{equation*}
%\mathbf{F}(\xfullvec,t) \trieq e^{t\MC{QL}}\MC{QL}\xcoarsevec, \qquad \mathbf{K}(\xcoarsevec,t) \trieq \MC{PL} \mathbf{F}(\xfullvec,t).
%\end{equation*}
%The term $\mathbf{F}: \mathbb{R}^N  \times [0,T] \rightarrow \mathbb{R}^K$ with $\mathbf{F} \in \RR{K} \otimes  \icspacefine \otimes \timeSpace$ is referred to as the orthogonal dynamics, and $\mathbf{K}: \mathbb{R}^K \times [0,T] \rightarrow \mathbb{R}^K$ with $\mathbf{K} \in \RR{K} \otimes  \icspacecoarse \otimes \timeSpace$ the memory kernel. 
\begin{equation}\label{eq:kerndef}
\mathbf{K}(\xcoarsevec,t) \equiv \MC{PL}e^{t\MC{QL}}\MC{QL}\xcoarsevec.
\end{equation}
The term $\mathbf{K}: \mathbb{R}^K \times [0,T] \rightarrow \mathbb{R}^K$ with $\mathbf{K} \in \RR{K} \otimes  \icspacecoarse \otimes \timeSpace$ is referred to as the memory kernel. 

Using the identity $e^{t \MC{L}} \MC{PL}\xcoarsevec = \vcoarsevec^T \mathbf{R}(\ucoarsevec(t))$ and Definition~\eqref{eq:kerndef}, Equation~\ref{eq:MZ_Identity3} can be written in a more transparent form,
\begin{equation}\label{eq:MZ_Identity_VMS}
\vcoarsevec^T \bigg( \frac{\partial }{ \partial t}\ucoarsevec(t) - \mathbf{R}(\ucoarsevec(t)) \bigg) = \int_0^t \mathbf{K}(\acoarsevec(t-s),s) ds,
\end{equation}
Note that the time derivative is represented as a partial derivative due to the Liouville operators embedded in the memory.

The derivation up to this point has cast the original full-order model in generalized coordinates (Eq.~\ref{eq:FOM_generalized_b}) as a linear PDE. Through the use of projection operators and Duhamel's principle, an \textit{exact} equation (Eq.~\ref{eq:MZ_Identity_VMS}) for the coarse-scale dynamics \textit{only in terms of the coarse-scale variables} was then derived. The effect of the fine-scales on the coarse-scales appeared as a memory integral. This memory integral may be thought of as the closure term that is required to exactly account for the unresolved dynamics. %Finally, the coarse-scale equation was written in terms of the more fa Galerkin ROM notation, arriving at Eq.~\ref{eq:MZ_Identity_VMS}. This equation is the launching point for developing a variety of potential MZ-based ROMs.
 
\subsubsection{The $\tau$-model and the Adjoint Petrov--Galerkin Method}\label{sec:tau-model}
The direct evaluation of the memory term in Eq.~\ref{eq:MZ_Identity_VMS} is, in general, computationally intractable. To gain a reduction in computational cost, an approximation to the memory must be devised. A variety of such approximations exist, and here we outline the $\tau$-model~\cite{parish_dtau,BarberThesis}. The $\tau$-model can be interpreted as the result of assuming that the memory is driven to zero in finite time and approximating the integral with a quadrature rule. This can be written as a two-step approximation,
\begin{equation*}
\int^t_0 \mathbf{K}(\acoarsevec(t-s),s) ds \approx \int^t_{t-\tau} \mathbf{K}(\acoarsevec(t-s),s) ds \approx \tau \mathbf{K}(\acoarsevec(t),0).
\end{equation*}
Here, $\tau \in \RR{}$ is a stabilization parameter that is sometimes referred to as the ``memory length." It is typically static and user-defined, though methods of dynamically calculating it have been developed in \cite{parish_dtau}. The \textit{a priori} selection of $\tau$ and sensitivity of the model output to this selection are discussed later in this manuscript. 

The term $\mathbf{K}(\acoarsevec(t),0)$ can be shown to be~\cite{parishVMS},
\begin{equation*}
\mathbf{K}(\acoarsevec(t),0) =  \vcoarsevec^T \mathbf{J}[\ucoarsevec(t)]  \Pifine \mathbf{R}(\ucoarsevec(t)),
\end{equation*}
where $\Pifine$ is the ``orthogonal projection operator," defined as $\Pifine \equiv \big(\mathbf{I}-\vcoarsevec \vcoarsevec^T\big)$. We define the corresponding coarse-scale projection operator as $\Picoarse \equiv \vcoarsevec \vcoarsevec^T$. The coarse-scale equation with the $\tau$-model reads,
\begin{equation}\label{eq:MZ_coarse_tau_NL}
\vcoarsevec^T \bigg( \frac{d }{dt}\ucoarsevec(t) - \mathbf{R}(\ucoarsevec(t)) \bigg) = \tau \vcoarsevec^T \mathbf{J}[\ucoarsevec(t)]  \Pifine \mathbf{R}(\ucoarsevec(t)).
\end{equation}
Equation~\ref{eq:MZ_coarse_tau_NL} provides a closed equation for the evolution of the coarse-scales. The left-hand side of Eq.~\ref{eq:MZ_coarse_tau_NL} is the standard Galerkin ROM, and the right-hand side can be viewed as a subgrid-scale model. 

When compared to existing methods, the inclusion of the $\tau$-model leads to a method that is analogous to a non-linear formulation of the \textit{adjoint} stabilization technique developed in the finite element community. The ``adjoint" terminology arises from writing Eq.~\ref{eq:MZ_coarse_tau_NL} in a Petrov--Galerkin form,
\begin{equation}\label{eq:adjoint_Galerkin}
 \bigg[ \bigg( \mathbf{I} + \tau \Pifine^T \mathbf{J}^T[\ucoarsevec(t)]\bigg)  \vcoarsevec \bigg]^T  \bigg( \frac{d }{dt}\ucoarsevec(t) - \mathbf{R}(\ucoarsevec(t)) \bigg) = \mathbf{0}.
\end{equation}
It is seen that Eq.~\ref{eq:adjoint_Galerkin} involves taking the inner product of the coarse-scale ODE with a test-basis that contains the adjoint of the coarse-scale Jacobian. Unlike GLS stabilization, adjoint stabilization can be derived from the multiscale equations~\cite{hughes0}. Due to the similarity of the proposed method with adjoint stabilization techniques, as well as the LSPG terminology, the complete ROM formulation will be referred to as the Adjoint Petrov--Galerkin (APG) method. 

\subsubsection{Comparison of APG and LSPG}
%Equation~\ref{eq:MZ_coarse_tau_NL} can alternatively be interpreted as a Petrov--Galerkin ROM. Recall that a Petrov--Galerkin ROM is given by,
%\begin{equation*}
%\Wcoarsevec^T \bigg( \frac{d \ucoarsevec}{dt} - \mathbf{R}(\ucoarsevec) \bigg) = 0.
%\end{equation*}
%Consider the test basis,
%%\begin{equation}
%%\Psicoarsevec^T  =  \vcoarsevec^T\bigg( \mathbf{I} -\mathbf{J}[\ucoarsevec]\Pifine \bigg).
%%\end{equation}
%\begin{equation}\label{eq:MZ_testbasis}
%\Wcoarsevec_{MZ}  = \bigg( \mathbf{I} + \tau \Pifine^T \mathbf{J}^T[\ucoarsevec]\bigg)  \vcoarsevec .
%\end{equation}
%The Petrov--Galerkin ROM is then,
%\begin{equation}\label{eq:MZ_coarse_tau_PG}
% \vcoarsevec^T\bigg( \mathbf{I} +\tau \mathbf{J}[\ucoarsevec] \Pifine \bigg)\bigg( \frac{d \ucoarsevec}{dt} - \mathbf{R}(\ucoarsevec) \bigg) = 0.
%\end{equation}
%Noting that $\Pifine \ucoarsevec_t = \mathbf{0}$, we see that Eq.~\ref{eq:MZ_coarse_tau_PG} is equivalent to Eq.~\ref{eq:MZ_coarse_tau_NL}. The Adjoint Petrov--Galerkin method can thus be interpreted as a Petrov--Galerkin ROM.
The APG method displays similarities to LSPG. From Eq.~\ref{eq:adjoint_Galerkin}, it is seen that the test basis for the APG ROM is given by,
\begin{equation}\label{eq:MZ_testbasis}
\Wcoarsevec_{A}  = \bigg( \mathbf{I} + \tau \Pifine^T \mathbf{J}^T[\ucoarsevec]\bigg)  \vcoarsevec .
\end{equation}
Recall the LSPG test basis for backward differentiation schemes,
\begin{equation}\label{eq:LSPG_testbasis}
\Wcoarsevec_{LSPG} = \big( \mathbf{I} - \alpha \Delta t \mathbf{J}[\ucoarsevec] \big) \vcoarsevec.
\end{equation}
Comparing Eq.~\ref{eq:LSPG_testbasis} to Eq.~\ref{eq:MZ_testbasis}, we can draw several interesting comparisons between the LSPG and APG method. Both contain a time-scale: $\tau$ for APG and $\alpha \Delta t$ for LSPG. Both include Jacobians of the non-linear function $\mathbf{R}(\ucoarsevec)$. The two methods differ in the presence of the orthogonal projection operator in APG, a transpose on the Jacobian, and a sign discrepancy on the Jacobian. These last two differences are consistent with the discrepancies between GLS and adjoint stabilization methods used in the finite element community. See, for instance, Eqs. 71 and 73 in Ref~\cite{hughes0}.

\section{Analysis}\label{sec:analysis}
This section presents theoretical analyses of the Adjoint Petrov--Galerkin method. Specifically, error and eigenvalue analyses are undertaken for linear time-invariant (LTI) systems.
Section~\ref{sec:error_bound} derives \textit{a priori} error bounds for the Galerkin and Adjoint Petrov--Galerkin ROMs. Conditions under which the APG ROM may be more accurate than the Galerkin ROM are discussed. Section~\ref{sec:selecttau} outlines the selection of the parameter $\tau$ that appears in APG. 
%\begin{comment}
\subsection{A Priori Error Bounds}\label{sec:error_bound}
We now derive \textit{a priori} error bounds for the Galerkin and Adjoint Petrov--Galerkin method for LTI systems. Define $\ufullvec_F$ to be the solution to the FOM, $\ucoarsevec_G$ to be the solution to the Galerkin ROM, and $\ucoarsevec_A$ the solution to the Adjoint Petrov--Galerkin ROM. The full-order solution, Galerkin ROM, and Adjoint Petrov--Galerkin ROMs obey the following dynamical systems,
\begin{equation}\label{eq:fom_error}
\frac{d }{dt}\ufullvec_F(t) =  \mathbf{R}(\ufullvec_F(t)), \qquad \ufullvec_F(0) = \ufullvec_0,
\end{equation}
\begin{equation}\label{eq:grom_error}
\frac{d }{dt}\ucoarsevec_G(t) = \mathbb{P}_G \mathbf{R}(\ucoarsevec_G(t)), \qquad \ufullvec_G(0) = \ufullvec_0,
\end{equation}
\begin{equation}\label{eq:ag_error}
\frac{d }{dt}\ucoarsevec_A(t) = \mathbb{P}_{A} \mathbf{R}(\ucoarsevec_A(t)), \qquad \ufullvec_A(0) = \ufullvec_0,
\end{equation}
where the Galerkin and Adjoint Petrov--Galerkin projections are, respectively,
\begin{equation*}
\mathbb{P}_G = \Picoarse, \qquad \mathbb{P}_A =  \Picoarse \big[ \mathbf{I} + \tau \mathbf{J}[\ucoarsevec_A] \Pifine \big].    
\end{equation*}
The residual of the full-order model is defined as,
\begin{equation*}
\mathbf{r}_F : \ufullvec \mapsto \frac{d \ufullvec}{dt} - \mathbf{R}(\ufullvec).
\end{equation*}
We define the error in the Galerkin and Adjoint Petrov--Galerkin method as,
\begin{equation*}
\error_G \trieq \ufullvec_F - \ucoarsevec_G, \qquad \error_A \defeq \ufullvec_F - \ucoarsevec_A.   
\end{equation*}
Similarly, the coarse-scale error is defined as,
\begin{equation*}
\errorcoarse_G \trieq \Picoarse \ufullvec_F - \ucoarsevec_G, \qquad \tilde{\error}_A \defeq \Picoarse  \ufullvec_F - \ucoarsevec_A.   
\end{equation*}
In what follows, we assume Lipschitz continuity of the right-hand side function: there exists a constant $\kappa > 0$ such that $\forall \mathbf{x},\mathbf{y} \in \mathbb{R}^N$,
\begin{equation*}
\norm{ \mathbf{R}(\mathbf{x}) - \mathbf{R}(\mathbf{y})} \le \kappa \norm{ \mathbf{x} - \mathbf{y}}. 
\end{equation*}
To simplify the analysis, the Adjoint Petrov--Galerkin projection is approximated to be stationary in time. Note that the Galerkin projection is stationary in time.  For clarity, we suppress the temporal argument on the states when possible in the proofs.
\begin{theorem}
\textit{A priori} error bounds for the Galerkin and Adjoint Petrov--Galerkin ROMs are, respectively,
\begin{equation}\label{eq:g_nlbound} \norm{\error_G(t)} \le \int_0^t e^{ \norm{\mathbf{P}_G} \kappa s }\norm{\big[\mathbf{I} - \mathbb{P}_G \big]  \mathbf{R}( \ufullvec_F(t-s) ) } ds .\end{equation}
\begin{equation}\label{eq:ag_nlbound} \norm{\error_A(t)} \le \int_0^t e^{\norm{\mathbf{P}_A} \kappa s }\norm{\big[\mathbf{I} - \mathbb{P}_A \big]  \mathbf{R}(\ufullvec_F(t-s))} ds .\end{equation}
\end{theorem}

\begin{proof}
We prove only Eq.~\ref{eq:ag_nlbound} as Eq.~\ref{eq:g_nlbound} is obtained through the same arguments. Following~\cite{carlberg_lspg_v_galerkin}, start by subtracting Eq.~\ref{eq:ag_error} from Eq.~\ref{eq:fom_error}, and adding and subtracting $\mathbb{P}_A\mathbf{R} ( \ufullvec_F$),
\begin{equation*}\label{eq:ea_lti_1}
\frac{d \error_A}{dt} = \mathbf{R} (\ufullvec_F) + \mathbb{P}_A \mathbf{R} (\ufullvec_F) - \mathbb{P}_A \mathbf{R} (\ufullvec_F)  - \mathbb{P}_A \mathbf{R}(\ucoarsevec_G), \qquad \error_A(0) = \mathbf{0}.
\end{equation*}
Taking the $L^2$-norm,
\begin{equation*}\label{eq:ea_lti_2}
\norm{ \frac{d \error_A}{dt} } = \norm{ \mathbf{R} (\ufullvec_F) + \mathbb{P}_A \mathbf{R} (\ufullvec_F) - \mathbb{P}_A \mathbf{R} (\ufullvec_F)  - \mathbb{P}_A \mathbf{R}(\ucoarsevec_G)}.
\end{equation*}
Applying the triangle inequality,
\begin{equation*}\label{eq:ea_lti_3}
\norm{ \frac{d \error_A}{dt} }\le \norm{\big[\mathbf{I} - \mathbb{P}_A \big]  \mathbf{R}( \ufullvec_F) }  + \norm{  \mathbb{P}_A \big( \mathbf{R} \ufullvec_F)    -  \mathbf{R}(\ucoarsevec_G) \big) } .
\end{equation*}
Invoking the assumption of Lipschitz continuity,
\begin{equation*}\label{eq:ea1}
\norm{ \frac{d \error_A}{dt} }\le \norm{\big[\mathbf{I} - \mathbb{P}_A \big]  \mathbf{R}( \ufullvec_F)}  +\norm{ \mathbb{P}_A} \kappa  \norm{\error_A}.
\end{equation*}
Noting that $\frac{d \norm{ \error_A } }{dt} \le \norm{ \frac{d \error_A}{dt}}$ 
%\begin{align*}
%\frac{d \norm{ \error_A } }{dt}  &= \frac{1}{\norm{\error_A}} \error_A^T \frac{d \error_A}{dt} \\
%&\le  \norm{ \frac{1}{\norm{\error_A}} \error_A} \norm{ \frac{d \error_A}{dt}} \\
%&\le \norm{ \frac{d \error_A}{dt}},
%\end{align*}
we have
\footnote{ 
$$\frac{d \norm{ \error_A } }{dt} = \frac{1}{\norm{\error_A}} \error_A^T \frac{d \error_A}{dt} \le  \norm{ \frac{1}{\norm{\error_A}} \error_A} \norm{ \frac{d \error_A}{dt}} \le \norm{ \frac{d \error_A}{dt}}
$$},
\begin{equation}\label{eq:ea2}
\frac{d  \norm{\error_A }}{dt} \le \norm{\big[\mathbf{I} - \mathbb{P}_A \big]  \mathbf{R}( \ufullvec_F)}  +\norm{ \mathbb{P}_A} \kappa  \norm{\error_A}.
\end{equation}
An upper bound on the error for the Adjoint Petrov--Galerkin method is then obtained by solving Eq.~\ref{eq:ea2} for $\norm{\error_A}$, which yields,
%\begin{equation}\label{eq:ea2}
%\int_0^t  \frac{d \norm{\error_A}} {ds}  ds\le  \int_0^t \norm{\big[\mathbf{I} - \mathbb{P}_A \big]  \mathbf{R}( \ufullvec_F)}  +\norm{ \mathbb{P}_A} %\kappa  \norm{\error_A} ds.
%\end{equation}
%Solving for $\norm{\error_A}$,
\begin{equation*}
\norm{ \error_A(t) } \le \int_0^t e^{\norm{ \mathbb{P}_A } \kappa s }\norm{\big[\mathbf{I} - \mathbb{P}_A \big]  \mathbf{R} (\ufullvec_F(t-s))} ds .
\end{equation*}

 \end{proof}
Note that, for the Adjoint Petrov--Galerkin method, the error bound provided in Eq.~\ref{eq:ag_nlbound} is not truly an \textit{a priori} bound as  $\mathbb{P}_A$ is a function of $\ucoarsevec_A$.
Equation~\ref{eq:g_nlbound} (and~\ref{eq:ag_nlbound}) indicates an exponentially growing error and contains two distinct terms. The term $e^{\norm{\mathbb{P}_A} \kappa s }$ indicates the exponential growth of the error in time.  The second term of interest is $\norm{\big[\mathbf{I} - \mathbb{P}_A \big]  \mathbf{R} (\ufullvec_F(t))}$. This term corresponds to the error introduced at time $t$ due to projection.  It is important to note that the first term controls how the error will grow in time, while the second term controls how much error is added at a given time. 

Unfortunately, for general non-linear systems, \textit{a priori} error analysis provides minimal insight beyond what was just mentioned. To obtain a more intuitive understanding of the APG method, error analysis in the case that $\mathbf{R}(\cdot)$ is a linear time-invariant operator is now considered. 

\begin{theorem}
Let $\mathbf{R}(\ufullvec) = \mathbf{A}\ufullvec$ be a linear time-invariant operator. Error bounds for the coarse-scales in the Galerkin and APG ROMs, are, respectively,
\begin{equation}\label{eq:g_ltibound_inplane}
\norm{\errorcoarse_G(t) } \le  \norm{\mathbf{S}_G} \norm{\mathbf{S}_G^{-1} }\int_0^t  \norm{ e^{ \Lambda_{G} (t-s)}  } \norm{ \vcoarsevec^T  \mathbb{P}_G \mathbf{r}_F (\ucoarsevec_F(s)) } ds ,
\end{equation}
\begin{equation}\label{eq:ag_ltibound_inplane}
\norm{\errorcoarse_A(t) } \le \norm{\mathbf{S}_A} \norm{\mathbf{S}_A^{-1} } \int_0^t \norm{ e^{ \Lambda_{A}(t-s)} } \norm{\vcoarsevec^T   \mathbb{P}_A \mathbf{r}_F (\ucoarsevec_F (s)) }ds ,
\end{equation}
where $\Lambda_{G}$ is a diagonal matrix of the eigenvalues of $\vcoarsevec^T \mathbb{P}_G \mathbf{A} \vcoarsevec$, while $\mathbf{S}_G$ are the eigenvectors of $\vcoarsevec^T \mathbb{P}_G \mathbf{A} \vcoarsevec$. Similar definitions hold for $\Lambda_{A}$ and $\mathbf{S}_A$ using the APG projection. 
\end{theorem}
\begin{proof}
We prove only Eq.~\ref{eq:ag_ltibound_inplane} as Eq.~\ref{eq:g_ltibound_inplane} is obtained through the same arguments. Subtracting Eq.~\ref{eq:ag_error} from Eq.~\ref{eq:fom_error}, and adding and subtracting $\mathbb{P}_A\mathbf{A} \ufullvec_F$,
%$$
%\frac{d \error_A}{dt} = \mathbf{A} \ufullvec_F + \mathbb{P}_A \mathbf{A} \ufullvec_F - \mathbb{P}_A %\mathbf{A} \ufullvec_F  - \mathbb{P}_A \mathbf{A}\ucoarsevec_G, \qquad \mathbf{e}_A(0) = \mathbf{0}.
%$$
%Gathering terms,
$$
\frac{d \error_A}{dt} = \big[\mathbf{I} - \mathbb{P}_A \big]  \mathbf{A} \ufullvec_F  +  \mathbb{P}_A \mathbf{A} \error_A, \qquad \mathbf{e}_A(0) = \mathbf{0}.
$$
We decompose the error into coarse and fine-scale components $\error_A = \errorcoarse_A + \errorfine_A.$ Since $\ufinevec_A = \mathbf{0}$ for all time, $\errorfine_A = \ufinevec_{F}$.
An equation for the coarse-scale error is obtained by left multiplying by $\vcoarsevec^T \mathbb{P}_A$,
$$
\vcoarsevec^T \mathbb{P}_A \frac{d \errorcoarse_A}{dt} + \vcoarsevec^T \mathbb{P}_A \frac{d \errorfine_A}{dt} = \vcoarsevec^T \mathbb{P}_A \big[\mathbf{I} - \mathbb{P}_A \big]  \mathbf{A} \ufullvec_F  +  \vcoarsevec^T \mathbb{P}_A  \mathbb{P}_A \mathbf{A} \errorcoarse_A + \vcoarsevec^T \mathbb{P}_A \mathbb{P}_A \mathbf{A} \ufinevec_{F}.
$$
Now express the coarse-scale error as,
$$\errorcoarse_A = \vcoarsevec \errorcoarsegen_A,$$
where $\errorcoarsegen_A: [0,T] \rightarrow \RR{K}$ are the coarse-scale error generalized coordinates.
%Define the generalized coordinates for the coarse-scale error as,
%$$\errorcoarsegen_A  \defeq \vcoarsevec^T \errorcoarse_A$$
%with $\errorcoarsegen_A \in \mathbb{R}^K.$
Rearranging gives,
$$
\frac{d \errorcoarsegen_A}{dt}  = \vcoarsevec^T \mathbb{P}_A \big[\mathbf{I} - \mathbb{P}_A \big]  \mathbf{A} \ufullvec_F  +  \vcoarsevec^T \mathbb{P}_A  \mathbb{P}_A \mathbf{A} \errorcoarse_A + \vcoarsevec^T \mathbb{P}_A \mathbb{P}_A \bigg( \mathbf{A} \ufinevec_{F} - \frac{d \ufinevec_F}{dt} \bigg).
$$
Noting that $\mathbb{P}_A^2 = \mathbb{P}_A$, and that the first term on the right-hand side is zero, the equation reduces to,
\begin{equation}\label{eq:lti_inplane}
\frac{d \errorcoarsegen_A}{dt} = \vcoarsevec^T \mathbb{P}_A \mathbf{A} \vcoarsevec \errorcoarsegen_A + \vcoarsevec^T \mathbb{P}_A \bigg(  \mathbf{A} \ufinevec_{F} -  \frac{d \ufinevec_F}{dt}\bigg).
\end{equation}
Equation~\ref{eq:lti_inplane} is a first-order linear inhomogeneous differential equation that can be solved analytically. A standard way to do this is by performing the eigendecomposition,
%Performing triangle inequality and noting the first term is zero,
%\begin{equation}\label{eq:eg_lti}
%\frac{d \norm{ \errorcoarsegen_A }}{dt} \le \norm{ \vcoarsevec^T \mathbb{P}_A \mathbf{A} \vcoarsevec \errorcoarsegen_A }+ \norm{\vcoarsevec^T \mathbb{P}_A \mathbf{A} \ufinevec_{F}}.
%\end{equation}
%By the submultipicative property of the 2-norm,
%\begin{equation}\label{eq:eg_lti}
%\frac{d \norm{ \errorcoarsegen_A }}{dt} \le \norm{ \vcoarsevec^T \mathbb{P}_A \mathbf{A} \vcoarsevec} \norm{ \errorcoarsegen_A }+ \norm{\vcoarsevec^T \mathbb{P}_A \mathbf{A} \ufinevec_{F}}.
%\end{equation}
%Noting that $\norm{ \errorcoarsegen_A} = \norm{\vcoarsevec^T \error_A} = \norm{\error_A}$,
%\begin{equation}\label{eq:eg_lti_bound}
%\frac{d \norm{\errorcoarse_A} }{dt} \le \norm{ \vcoarsevec^T \mathbb{P}_A \mathbf{A} \vcoarsevec} \norm{ \errorcoarse_A }+ \norm{\vcoarsevec^T \mathbb{P}_A \mathbf{A} \ufinevec_{F}}.
%\end{equation}
%Equation~\ref{eq:eg_lti_bound} is a linear homogeneous first order equation and has the solution,
%\begin{equation}\label{eq:eg_lti}
%\errorcoarse(t) \le \int_0^t e^{ \norm{ \vcoarsevec^T \mathbb{P}_A \mathbf{A} \vcoarsevec}s}\norm{\vcoarsevec^T \mathbb{P}_A \mathbf{A} } \norm{\ufinevec_{F}(t-s) ds } .
%\end{equation}
\begin{equation}\label{eq:eigendecompostion}
\vcoarsevec^T \mathbb{P}_A \mathbf{A} \vcoarsevec = \mathbf{S_A}\Lambda_A \mathbf{ S_A^{-1} },
\end{equation}
where $\mathbf{S}_A \in \mathbb{R}^{K \times K}$ are the eigenvectors and $\Lambda_A$ is a diagonal matrix of eigenvalues. Left multiplying Eq.~\ref{eq:lti_inplane} by $\mathbf{S_A}^{-1}$ and inserting the eigendecomposition~\ref{eq:eigendecompostion}, Eq.~\ref{eq:lti_inplane} 
becomes,
% \begin{equation}\label{eq:eg_lti_1}
%\frac{d \errorcoarsegen_A}{dt} =\mathbf{S_A}\Lambda_A \mathbf{ S_A^{-1} } %\errorcoarsegen_A + \vcoarsevec^T \mathbb{P}_A \bigg(  \mathbf{A} \ufinevec_{F} - % \frac{d \ufinevec_F}{dt}\bigg),
%\end{equation}
%Left multiplying by $\mathbf{S_A}^{-1}$ to obtain a set of decoupled equations,
 \begin{equation}\label{eq:eg_lti_2}
\frac{d \mathbf{w}_A}{dt} =  \Lambda_A \mathbf{w}_A + \mathbf{S_A}^{-1} \vcoarsevec^T \mathbb{P}_A \bigg(  \mathbf{A} \ufinevec_{F} -  \frac{d \ufinevec_F}{dt}\bigg),
\end{equation}
where $\mathbf{w}_A = \mathbf{S}^{-1} \errorcoarsegen_A$. 
The above is a set of decoupled first order inhomogenous differential equation that have the solution,
$$
\mathbf{w}_A(t) = \int_0^t e^{ \Lambda_A s}  \mathbf{S_A}^{-1} \vcoarsevec^T \mathbb{P}_A \bigg(  \mathbf{A} \ufinevec_{F}(t-s) -  \frac{d }{dt}\ufinevec_F(t-s)\bigg) ds. 
$$
Left multiplying by $\vcoarsevec \mathbf{S_A}$ to obtain an equation for the coarse-scale error,
$$
\errorcoarse_A(t) = \int_0^t \vcoarsevec  \mathbf{S_A} e^{ \Lambda_A (t-s)}  \mathbf{S_A}^{-1} \vcoarsevec^T \mathbb{P}_A \bigg(  \mathbf{A} \ufinevec_{F}(s) -  \frac{d }{dt}\ufinevec_F(s)\bigg) ds.
$$
By definition of the FOM,
$$\frac{d \ucoarsevec_F}{dt} + \frac{d \ufinevec_F}{dt} = \mathbf{A}\ucoarsevec_F + \mathbf{A}\ufinevec_F.
$$
Thus,
$$
\errorcoarse_A(t) = \int_0^t \vcoarsevec  \mathbf{S_A} e^{ \Lambda_A (t-s)}  \mathbf{S_A}^{-1} \vcoarsevec^T \mathbb{P}_A \bigg(\frac{d }{dt}\ucoarsevec_F(s) - \mathbf{A} \ucoarsevec_F(s) \bigg) ds.
$$
Using the definition of the full-order residual,
$$
\errorcoarse_A(t) = \int_0^t \vcoarsevec  \mathbf{S_A} e^{ \Lambda_A (t-s)}  \mathbf{S_A}^{-1} \vcoarsevec^T \mathbb{P}_A \mathbf{r}_F(\ucoarsevec_F(s) )ds.
$$
Note that, although $\mathbf{r}_F(\ufullvec_F)$ is exactly zero, this is no longer guaranteed when evaluated at the projected solution $\ucoarsevec_F$. Taking the norm and using the sub-multiplicative property,
$$
\norm{ \errorcoarse_A(t) } \le \norm{\mathbf{S_A}} \norm{  \mathbf{S_A}^{-1}  }\int_0^t \norm{ e^{ \Lambda_A (t-s)}   } \norm{ \vcoarsevec^T \mathbb{P}_A \mathbf{r}_F(\ucoarsevec_F(s) ) }ds.
$$
\end{proof}
Similar to the non-linear case, Eqns~\ref{eq:g_ltibound_inplane} and~\ref{eq:ag_ltibound_inplane} contain two distinct terms. The first term corresponds to the exponential growth in time, $e^{\Lambda_A s}$. In contrast to the non-linear case, however, the arguments in the exponential contain the eigenvalues of $\vcoarsevec^T \mathbb{P}_A \mathbf{A} \vcoarsevec$. When these eigenvalues are negative, the term $e^{\Lambda_A s}$  can diminish the growth of error in time. The second term of interest is given by  $\norm{ \vcoarsevec^T \mathbb{P}_A \mathbf{r}_F(\ucoarsevec(t) ) }$. This term corresponds to the error introduced due to the unresolved scales at time $t$. This term is a statement of the error introduced due to the closure problem. 

\begin{corollary}\label{corollary:resid_bound}
If $\norm{ \vcoarsevec^T \mathbb{P}_A \mathbf{r}(\ucoarsevec_F(t)) } \le \norm{\vcoarsevec^T \mathbb{P}_G \mathbf{r}(\ucoarsevec_F(t))}$,  then the upper bound on the error introduced due to the fine-scales is less for APG than for the Galerkin Method.
\end{corollary}
\begin{proof}
By inspection, when $\norm{ \vcoarsevec^T \mathbb{P}_A \mathbf{r}_F(\ucoarsevec_F(t)) } \le \norm{\vcoarsevec^T \mathbb{P}_G \mathbf{r}_F(\ucoarsevec_F(t))}$, the error introduced in the integrand in Eqn.~\ref{eq:ag_ltibound_inplane} due to the coarse-scale residual is less than that in Eqn.~\ref{eq:g_ltibound_inplane}.
\end{proof}

\begin{theorem}\label{theorem:residualbounds}
Let $\mathbf{R}(\ufullvec) = \mathbf{A} \ufullvec$ be a linear time-invariant operator. Upper bounds of the Galerkin and Adjoint Petrov-Galerkin projections of the full-order coarse-scale residual, are, respectively,
\begin{align*}
&\norm{\vcoarsevec^T \mathbb{P}_G \mathbf{r}(\ucoarsevec_F(t)) } \le \norm{ \vcoarsevec^T\mathbf{A} \int_0^{\tau} e^{\Pifine \mathbf{A} \zeta } \Pifine \mathbf{A} \ucoarsevec_F(t - \zeta) d\zeta } +  \norm{ \vcoarsevec^T\mathbf{A} \int_{\tau}^t e^{\Pifine \mathbf{A} \zeta } \Pifine \mathbf{A} \ucoarsevec_F(t - \zeta) d\zeta }, \\
&\norm{ \vcoarsevec^T \mathbb{P}_A \mathbf{r}(\ucoarsevec_F(t)) } \le \norm{\vcoarsevec^T \mathbf{A} \int_0^{\tau} e^{\Pifine \mathbf{A} \zeta } \Pifine \mathbf{A} \ucoarsevec_F(t - \zeta) d\zeta   - \tau \vcoarsevec^T  \mathbf{A} \Pifine \mathbf{A} \ucoarsevec_F(t) } + \norm{\vcoarsevec^T  \mathbf{A} \int_{\tau}^{t} e^{\Pifine \mathbf{A} \zeta } \Pifine \mathbf{A} \ucoarsevec_F(t - \zeta) d\zeta }.
\end{align*}
\end{theorem}
\begin{proof}
The result will be proved first for the Galerkin method and then for the Adjoint Petrov-Galerkin method. By definition of the residual and the Galerkin projector,
$$
\mathbb{P}_G \mathbf{r}_F(\ucoarsevec_F) = \Picoarse \frac{d \ucoarsevec_F}{dt} - \Picoarse \mathbf{A} \ucoarsevec_F.$$
By definition of the FOM,
$$\Picoarse \bigg[ \frac{d \ucoarsevec_F}{dt} + \frac{d \ufinevec_F}{dt} - \mathbf{A}\ucoarsevec_F - \mathbf{A} \ufinevec_F \bigg] = \mathbf{0}.$$
Noting that $\Pifine \ucoarsevec_F = \mathbf{0}$ and rearranging,
\begin{equation}\label{eq:galerkin_resid}
\mathbb{P}_G \mathbf{r}_F(\ucoarsevec_F) = \Picoarse \mathbf{A} \ufinevec_F.
\end{equation}
The evolution equation of the fine-scales is defined by the FOM projected onto the fine-scale space,
$$\frac{d \ufinevec_F}{dt} = \Pifine \mathbf{A}\ucoarsevec_F + \Pifine \mathbf{A} \ufinevec_F, \qquad \ufinevec_F(0) = \mathbf{0}.$$
The above equation is a first-order nonhomogeneous linear system of differential equations. By the definition of the initial conditions, the solution to the fine-scales reduces to,
\begin{equation*}
\ufinevec(t) = \int_0^{t} e^{\Pifine \mathbf{A} (t-s) } \Pifine \mathbf{A} \ucoarsevec_F(s) ds,
\end{equation*}
where $ e^{\Pifine \mathbf{A} }$ is the matrix exponential. Introducing a change of variables, $\zeta = t-s$, this becomes,
\begin{equation}\label{eq:ufinesol}
    \ufinevec(t) = \int_0^{t} e^{\Pifine \mathbf{A} \zeta } \Pifine \mathbf{A} \ucoarsevec_F(t-\zeta) d\zeta.
\end{equation}
Substituting Eq.~\ref{eq:ufinesol} into Eq.~\ref{eq:galerkin_resid},
\begin{equation}\label{eq:galerkin_term}
\mathbb{P}_G \mathbf{r}_F(\ucoarsevec_F(t)) = \Picoarse \mathbf{A} \int_0^{t} e^{\Pifine \mathbf{A} \zeta } \Pifine \mathbf{A} \ucoarsevec_F(t-\zeta) d\zeta.
\end{equation}
Left multiplying by $\vcoarsevec^T$, noting that $\vcoarsevec^T \Picoarse = \vcoarsevec^T$, and taking the norm,
$$\norm{\vcoarsevec^T \mathbb{P}_G \mathbf{r}_F(\ucoarsevec_F(t)) } = \norm{ \vcoarsevec^T\mathbf{A} \int_0^{t} e^{\Pifine \mathbf{A} \zeta } \Pifine \mathbf{A} \ucoarsevec_F(t-\zeta) d\zeta }.$$
Breaking the integral up into two intervals and applying the triangle inequality, the desired result for Galerkin is obtained,
$$\norm{\vcoarsevec^T \mathbb{P}_G \mathbf{r}_F(\ucoarsevec_F(t)) } \le \norm{ \vcoarsevec^T\mathbf{A} \int_0^{\tau} e^{\Pifine \mathbf{A} \zeta } \Pifine \mathbf{A} \ucoarsevec_F(t-\zeta) d\zeta } +  \norm{ \vcoarsevec^T\mathbf{A} \int_{\tau}^t e^{\Pifine \mathbf{A} \zeta } \Pifine \mathbf{A} \ucoarsevec_F(t-\zeta) d\zeta }.$$
For APG,
$$\mathbb{P}_A \mathbf{r}_F(\ucoarsevec_F(t)) = \Picoarse \frac{d }{dt}\ucoarsevec_F(t) - \Picoarse \mathbf{A} \ucoarsevec_F(t)  - \tau \Picoarse \mathbf{A} \Pifine \mathbf{A} \ucoarsevec_F(t).$$
The first two terms on the right-hand side are the Galerkin term from Eq.~\ref{eq:galerkin_term}. Substituting this into the above equation results in,
$$\mathbb{P}_A \mathbf{r}_F(\ucoarsevec_F(t)) = \Picoarse \mathbf{A} \int_0^{t} e^{\Pifine \mathbf{A} \zeta } \Pifine \mathbf{A} \ucoarsevec_F(t - \zeta) d\zeta   - \tau \Picoarse \mathbf{A} \Pifine \mathbf{A} \ucoarsevec_F(t).$$
Left multiplying by $\vcoarsevec^T$ and taking the norm, 
$$\norm{ \vcoarsevec^T \mathbb{P}_A \mathbf{r}_F(\ucoarsevec_F(t)) } =\norm{\vcoarsevec^T \mathbf{A} \int_0^{t} e^{\Pifine \mathbf{A} \zeta } \Pifine \mathbf{A} \ucoarsevec_F(t - \zeta) d\zeta   - \tau \vcoarsevec^T \Picoarse \mathbf{A} \Pifine \mathbf{A} \ucoarsevec_F(t) }.$$
Breaking the integral up into two intervals, applying the triangle inequality, and noting that $\vcoarsevec^T \Picoarse = \vcoarsevec^T$, the desired result for APG is obtained,
$$\norm{ \vcoarsevec^T \mathbb{P}_A \mathbf{r}_F(\ucoarsevec_F(t)) } \le \norm{\vcoarsevec^T \mathbf{A} \int_0^{\tau} e^{\Pifine \mathbf{A} \zeta } \Pifine \mathbf{A} \ucoarsevec_F(t - \zeta) d\zeta   - \tau \vcoarsevec^T \mathbf{A} \Pifine \mathbf{A} \ucoarsevec_F(t) } + \norm{\vcoarsevec^T \mathbf{A} \int_{\tau}^{t} e^{\Pifine \mathbf{A} \zeta } \Pifine \mathbf{A} \ucoarsevec_F(t - \zeta) d\zeta }.$$

\end{proof}

Theorem~\ref{theorem:residualbounds} provides an upper bound on the error introduced due to the closure problem for both the Galerkin and APG ROMs. These error bounds were obtained by analytically solving for the fine-scales as a function of coarse-scales. This led to a memory integral involving the past history of the coarse-scales. The memory integral expressing the solution to the fine-scales was then split into two intervals: one from $[0,\tau]$ and one from $[\tau,t]$. The APG method can be interpreted as approximating the integral over the first interval with a quadrature rule and ignoring the second interval. The Galerkin ROM ignores both intervals. Next, Theorem~\ref{theorem:apg_error} shows that, for sufficiently small $\tau$, the APG approximation is more accurate than the Galerkin approximation. This result is intuitive since, for sufficiently small $\tau$, the quadrature rule used in deriving APG is accurate.

\begin{theorem}\label{theorem:apg_error}
In the limit of $\tau \rightarrow 0^+$ the error bound provided in Theorem~\ref{theorem:residualbounds} is less per unit $\tau$ in APG than in Galerkin,%bound provided in Theorem~\ref{theorem:residualbounds}
%In the limit of $\tau \rightarrow 0^+$, the bound provided in Theorem~\ref{theorem:residualbounds} on the Adjoint Petrov-Galerkin projection of the coarse-scale residual is less than or equal %to the bound provided in Theorem~\ref{theorem:residualbounds} of the Galerkin projection of the coarse-scale residual, 
\begin{multline}\label{eq:proof_residerror_conclusion}
 \lim_{\substack{\tau \rightarrow 0^+} } \frac{1}{\tau} \bigg[ \norm{\vcoarsevec^T \mathbf{A} \int_0^{\tau} e^{\Pifine \mathbf{A} \zeta } \Pifine \mathbf{A} \ucoarsevec_F(t - \zeta) d\zeta   - \tau  \vcoarsevec^T \mathbf{A} \Pifine \mathbf{A} \ucoarsevec_F(t)  } + \norm{\vcoarsevec^T \mathbf{A} \int_{\tau}^{t} e^{\Pifine \mathbf{A} \zeta } \Pifine \mathbf{A} \ucoarsevec_F(t - \zeta) d\zeta } \bigg] \le \\
\lim_{\substack{\tau \rightarrow 0^+} } \frac{1}{\tau} \bigg[ \norm{ \vcoarsevec^T\mathbf{A} \int_0^{\tau} e^{\Pifine \mathbf{A} \zeta } \Pifine \mathbf{A} \ucoarsevec_F(t - \zeta) d\zeta } +  \norm{ \vcoarsevec^T\mathbf{A} \int_{\tau}^t e^{\Pifine \mathbf{A} \zeta } \Pifine \mathbf{A} \ucoarsevec_F(t - \zeta) d\zeta } \bigg].
\end{multline}

%\begin{comment}
%\begin{equation}\label{eq:proof_residerror}
% \lim_{\substack{\tau \rightarrow 0^+} } \bigg[\frac{1}{\tau} \norm{\vcoarsevec^T \mathbf{A} \bigg[ \int_0^{\tau} e^{\Pifine \mathbf{A} \zeta } \Pifine \mathbf{A} \ucoarsevec_F(t - \zeta) d\zeta %  - \tau  \Pifine \mathbf{A} \ucoarsevec_F(t) \bigg]  } \le \\
%\lim_{\substack{\tau \rightarrow 0^+} } \bigg[ \frac{1}{\tau} \norm{ \vcoarsevec^T\mathbf{A} \int_0^{\tau} e^{\Pifine \mathbf{A} \zeta } \Pifine \mathbf{A} \ucoarsevec_F(t - \zeta) d\zeta }  %\bigg].
%\end{equation}

\begin{comment}

$$
\lim_{\substack{\tau \rightarrow 0^+} }\norm{\bigg[ \int_0^\tau e^{\Pifine \mathbf{A} s}  \Pifine \mathbf{A} \ucoarsevec_F(t-s) ds  - \tau \Picoarse \mathbf{A} \Pifine \mathbf{A} \ucoarsevec_F(t) }\le \lim_{\substack{\tau \rightarrow 0^+} } \norm{\bigg[ \int_0^\tau e^{\Pifine \mathbf{A} s}  \Pifine \mathbf{A} \ucoarsevec_F(t-s) ds } 
$$
\end{comment}

\end{theorem}

\begin{proof}

Define the difference between the APG and Galerkin error incurred per unit $\tau$ as,
\begin{multline}\label{eq:err_diff}
\mathbf{\overline{\delta}}(\tau) \defeq  \frac{1}{\tau} \bigg[ \norm{\vcoarsevec^T \mathbf{A} \int_0^{\tau} e^{\Pifine \mathbf{A} \zeta } \Pifine \mathbf{A} \ucoarsevec_F(t - \zeta) d\zeta   - \tau  \vcoarsevec^T \mathbf{A} \Pifine \mathbf{A} \ucoarsevec_F(t)  } + \norm{\vcoarsevec^T \mathbf{A} \int_{\tau}^{t} e^{\Pifine \mathbf{A} \zeta } \Pifine \mathbf{A} \ucoarsevec_F(t - \zeta) d\zeta } \bigg]  \\ -
 \frac{1}{\tau} \bigg[ \norm{ \vcoarsevec^T\mathbf{A} \int_0^{\tau} e^{\Pifine \mathbf{A} \zeta } \Pifine \mathbf{A} \ucoarsevec_F(t - \zeta) d\zeta } +  \norm{ \vcoarsevec^T\mathbf{A} \int_{\tau}^t e^{\Pifine \mathbf{A} \zeta } \Pifine \mathbf{A} \ucoarsevec_F(t - \zeta) d\zeta } \bigg].
\end{multline}
After canceling terms, we equivalently have
\begin{equation}\label{eq:delta1}
\mathbf{\overline{\delta}}(\tau) \defeq  \frac{1}{\tau}\norm{\vcoarsevec^T \mathbf{A} \int_0^{\tau} e^{\Pifine \mathbf{A} \zeta } \Pifine \mathbf{A} \ucoarsevec_F(t - \zeta) d\zeta   - \tau  \vcoarsevec^T  \mathbf{A} \Pifine \mathbf{A} \ucoarsevec_F(t)  } - \frac{1}{\tau}\norm{ \vcoarsevec^T\mathbf{A} \int_{0}^{\tau} e^{\Pifine \mathbf{A} \zeta } \Pifine \mathbf{A} \ucoarsevec_F(t - \zeta) d\zeta }.
\end{equation}
%Note that the term $\frac{1}{\tau}\int_0^{\tau} e^{\Pifine \mathbf{A} \zeta } \Pifine \mathbf{A} \ucoarsevec(t - \zeta) d\zeta$ defines the mean on the interval $[t - \tau , t]$.
Apply the rectangle rule to alternatively write the integrals in the above as,
\begin{equation}\label{eq:rec_rule}
\int_0^\tau  e^{\Pifine \mathbf{A} \zeta } \Pifine \mathbf{A} \ucoarsevec_F(t-\zeta) \zeta  = \tau  \Pifine \mathbf{A} \ucoarsevec_F(t)  + \frac{d}{d\zeta} \big(e^{\Pifine \mathbf{A}  \zeta}  \Pifine \mathbf{A} \ucoarsevec_F(t-\zeta)\big)[c] \frac{\tau^2}{2},
\end{equation}
for some $c$ in the interval $[0,\tau$]. Inserting Eq.~\ref{eq:rec_rule} into Eq.~\ref{eq:delta1},
$$\mathbf{\overline{\delta}}(\tau) =  \frac{1}{\tau}\norm{\vcoarsevec^T \mathbf{A}  \frac{d}{d\zeta} \big(e^{\Pifine \mathbf{A}  \zeta}  \Pifine \mathbf{A} \ucoarsevec_F(t-\zeta)\big)[c] \frac{\tau^2}{2}  } - \frac{1}{\tau}\norm{  \tau \vcoarsevec^T \mathbf{A} \Pifine \mathbf{A} \ucoarsevec_F(t)  + \vcoarsevec^T \mathbf{A}  \frac{d}{d\zeta} \big(e^{\Pifine \mathbf{A}  \zeta}  \Pifine \mathbf{A} \ucoarsevec_F(t-\zeta)\big)[c]\frac{\tau^2}{2} } .$$
Factoring out $\tau$, which is non-negative by definition,
$$\mathbf{ \overline{\delta}}(\tau) =  \norm{\vcoarsevec^T \mathbf{A}  \frac{d}{d\zeta} \big(e^{\Pifine \mathbf{A}  \zeta}  \Pifine \mathbf{A} \ucoarsevec_F(t-\zeta)\big)[c] \frac{\tau}{2}  } - \norm{\vcoarsevec^T \mathbf{A}   \Picoarse \mathbf{A} \Pifine \mathbf{A} \ucoarsevec_F(t)  + \vcoarsevec^T \mathbf{A}  \frac{d}{d\zeta} \big(e^{\Pifine \mathbf{A}  \zeta}  \Pifine \mathbf{A} \ucoarsevec_F(t-\zeta)\big)\frac{\tau}{2} }.$$
Taking the limit as $\tau \rightarrow 0^+$ and noting the norm is always non-negative,
$$\lim_{\substack{\tau \rightarrow 0^+}  } \overline{\delta}(\tau) =  - \norm{ \vcoarsevec^T \mathbf{A}  \Picoarse \mathbf{A} \Pifine \mathbf{A} \ucoarsevec_F(t)  } \le 0.$$
From Eq.~\ref{eq:err_diff} and the algebraic limit theorem, this implies that, 
%$$
%\lim_{\substack{\tau \rightarrow 0^+} }\frac{1}{\tau} \norm{\bigg[ \int_0^\tau e^{\Pifine \mathbf{A} s}  \Pifine \mathbf{A} \ucoarsevec_F(t-s) ds  - \tau \Picoarse \mathbf{A} \Pifine \mathbf{A} %\ucoarsevec_F(t) }\le \lim_{\substack{\tau \rightarrow 0^+} } \frac{1}{\tau} \norm{\bigg[ \int_0^\tau e^{\Pifine \mathbf{A} s}  \Pifine \mathbf{A} \ucoarsevec_F(t-s) ds } 
%$$
\begin{multline}\label{eq:proof_residerror}
 \lim_{\substack{\tau \rightarrow 0^+} } \frac{1}{\tau} \bigg[ \norm{\vcoarsevec^T \mathbf{A} \int_0^{\tau} e^{\Pifine \mathbf{A} \zeta } \Pifine \mathbf{A} \ucoarsevec_F(t - \zeta) d\zeta   - \tau  \vcoarsevec^T \Picoarse \mathbf{A} \Pifine \mathbf{A} \ucoarsevec_F(t)  } + \norm{\vcoarsevec^T \mathbf{A} \int_{\tau}^{t} e^{\Pifine \mathbf{A} \zeta } \Pifine \mathbf{A} \ucoarsevec_F(t - \zeta) d\zeta } \bigg] \le \\
\lim_{\substack{\tau \rightarrow 0^+} } \frac{1}{\tau} \bigg[ \norm{ \vcoarsevec^T\mathbf{A} \int_0^{\tau} e^{\Pifine \mathbf{A} \zeta } \Pifine \mathbf{A} \ucoarsevec_F(t - \zeta) d\zeta } +  \norm{ \vcoarsevec^T\mathbf{A} \int_{\tau}^t e^{\Pifine \mathbf{A} \zeta } \Pifine \mathbf{A} \ucoarsevec_F(t - \zeta) d\zeta } \bigg].
\end{multline}

\end{proof}
Theorem~\ref{theorem:apg_error} shows that, for sufficiently small $\tau$, an upper bound on the \textit{a priori} error introduced due to the closure problem in APG is \textit{less} than in Galerkin. This suggests that APG may be more accurate than Galerkin. The results of Theorem~\ref{theorem:apg_error} will be discussed more in Section~\ref{sec:theorem_discussion}. While Theorem~\ref{theorem:apg_error} shows that, for sufficiently small $\tau$, APG may be more accurate than Galerkin, it does not give insight into appropriate values of $\tau$. Theorem~\ref{theorem:errorbound_symmetric} and Corollary~\ref{corollary:errorbound_symmetric} seek to provide  insight into this issue.
\begin{theorem}\label{theorem:errorbound_symmetric}
%Let $\mathbf{R}(\ufullvec) = \mathbf{A}\ufullvec$ be a linear time-invariant operator. If $\mathbf{A}$ is self-adjoint with all real negative eigenvalues, then $\kappa_1 \le 0$ and  $\norm{ \vcoarsevec^T \mathbb{P}_A \mathbf{A} \vcoarsevec } \le \norm{ \vcoarsevec^T \mathbb{P}_G \mathbf{A} \vcoarsevec }$ for $\tau \le \frac{\lambda_1}{\lambda_N^2}$, where $\lambda_1$ and $\lambda_N$ are the largest and smallest eigenvalues of $\mathbf{A}$, respectively, and $\kappa_1$ is the largest eigenvalue of $\mathbf{A} + \tau \mathbf{A} \Pifine \mathbf{A}.$
Let $\mathbf{R}(\ufullvec) = \mathbf{A}\ufullvec$ be a linear time-invariant operator. If $\mathbf{A}$ is self-adjoint with all real negative eigenvalues, then $\lambda_{A_i} \ge \lambda_{G_i}$, where $\lambda_{A_1} \ge \lambda_{A_2} \ge \hdots \ge \lambda_{A_K}$ are the $K$ eigenvalues of $\vcoarsevec^T \mathbb{P}_A \mathbf{A} \vcoarsevec $ and $ \lambda_{G_1} \ge  \lambda_{G_2} \ge \hdots \ge \lambda_{G_K}$ are the $K$ eigenvalues of $\vcoarsevec^T \mathbb{P}_G \mathbf{A} \vcoarsevec$.
%and  $\norm{ \vcoarsevec^T \mathbb{P}_A \mathbf{A} \vcoarsevec } \le \norm{ \vcoarsevec^T \mathbb{P}_G \mathbf{A} \vcoarsevec }$ for $\tau \le \frac{\lambda_1}{\lambda_N^2}$, where $\lambda_1$ and $\lambda_N$ are the largest and smallest eigenvalues of $\mathbf{A}$, respectively, and $\kappa_1$ is the largest eigenvalue of $\mathbf{A} + \tau \mathbf{A} \Pifine \mathbf{A}.$

\end{theorem}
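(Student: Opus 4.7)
The plan is to reduce the comparison of the eigenvalues of $\vcoarsevec^T \mathbb{P}_A \mathbf{A} \vcoarsevec$ and $\vcoarsevec^T \mathbb{P}_G \mathbf{A} \vcoarsevec$ to a monotonicity statement for eigenvalues of a symmetric matrix under a positive semi-definite additive perturbation, and then invoke Weyl's inequality.

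First I would substitute the definitions $\mathbb{P}_G = \Picoarse = \vcoarsevec \vcoarsevec^T$ and, in the LTI case, $\mathbb{P}_A = \Picoarse\bigl[\mathbf{I} + \tau \mathbf{A} \Pifine\bigr]$, and use the orthonormality identity $\vcoarsevec^T \vcoarsevec = \mathbf{I}$ (so $\vcoarsevec^T \Picoarse = \vcoarsevec^T$) to obtain the explicit expressions
\[
\vcoarsevec^T \mathbb{P}_G \mathbf{A} \vcoarsevec = \vcoarsevec^T \mathbf{A} \vcoarsevec, \qquad
\vcoarsevec^T \mathbb{P}_A \mathbf{A} \vcoarsevec = \vcoarsevec^T \mathbf{A} \vcoarsevec + \tau\, \vcoarsevec^T \mathbf{A} \Pifine \mathbf{A} \vcoarsevec.
\]
Because $\mathbf{A} = \mathbf{A}^T$ by hypothesis and $\Pifine = \mathbf{I} - \vcoarsevec \vcoarsevec^T$ is symmetric, both reduced operators are symmetric, hence their eigenvalues are real and the ordering in the theorem statement is well defined.

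Next I would factor the perturbation as a Gram matrix. Since $\Pifine$ is not only symmetric but idempotent ($\Pifine^2 = \Pifine$, as it is an orthogonal projector), and $\mathbf{A}$ is self-adjoint,
\[
\tau\, \vcoarsevec^T \mathbf{A} \Pifine \mathbf{A} \vcoarsevec
= \tau\, \vcoarsevec^T \mathbf{A}^T \Pifine^T \Pifine \mathbf{A} \vcoarsevec
= \tau\, (\Pifine \mathbf{A} \vcoarsevec)^T (\Pifine \mathbf{A} \vcoarsevec) \succeq \mathbf{0},
\]
for any $\tau \ge 0$. Thus $\vcoarsevec^T \mathbb{P}_A \mathbf{A} \vcoarsevec$ is a positive semi-definite perturbation of $\vcoarsevec^T \mathbb{P}_G \mathbf{A} \vcoarsevec$.

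Finally I would invoke Weyl's monotonicity inequality for symmetric matrices: if $\mathbf{M}$ and $\mathbf{N}$ are symmetric and $\mathbf{N} \succeq \mathbf{0}$, then the non-increasingly ordered eigenvalues satisfy $\lambda_i(\mathbf{M} + \mathbf{N}) \ge \lambda_i(\mathbf{M})$ for each $i$. Applied to $\mathbf{M} = \vcoarsevec^T \mathbf{A} \vcoarsevec$ and $\mathbf{N} = \tau (\Pifine \mathbf{A} \vcoarsevec)^T (\Pifine \mathbf{A} \vcoarsevec)$, this immediately yields $\lambda_{A_i} \ge \lambda_{G_i}$ for $i = 1,\ldots,K$. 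I do not anticipate any serious obstacle; the only subtle ingredient is recognizing that the orthonormality of the POD basis makes $\Pifine$ a symmetric orthogonal projector, which is precisely what enables the Gram-matrix factorization that certifies positive semi-definiteness. The hypothesis that the eigenvalues of $\mathbf{A}$ are real and negative is not used in establishing the inequality itself, but is what renders the conclusion meaningful in conjunction with the growth-term bound $\norm{e^{\Lambda_A (t-s)}}$ appearing in Eq.~\ref{eq:ag_ltibound_inplane}.
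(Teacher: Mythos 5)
Your proposal is correct and takes essentially the same route as the paper: write $\vcoarsevec^T \mathbb{P}_A \mathbf{A} \vcoarsevec = \vcoarsevec^T \mathbb{P}_G \mathbf{A} \vcoarsevec + \tau\,\vcoarsevec^T \mathbf{A}\Pifine\mathbf{A}\vcoarsevec$, observe that the added term is positive semi-definite, and apply Weyl's monotonicity inequality. Your explicit Gram factorization $\tau(\Pifine\mathbf{A}\vcoarsevec)^T(\Pifine\mathbf{A}\vcoarsevec)$ and your precise statement of the Weyl hypothesis (only the \emph{perturbation} needs to be positive semi-definite, not the base matrix) are in fact slightly more careful than the paper's version, which loosely asserts that all three matrices are Hermitian positive semi-definite even though $\vcoarsevec^T\mathbf{A}\vcoarsevec$ is not.
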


\begin{proof}
Expanding the Adjoint Petrov-Galerkin operator,
$$ \vcoarsevec^T \mathbb{P}_A \mathbf{A} \vcoarsevec = \vcoarsevec^T \Picoarse \mathbf{A} \vcoarsevec + \tau \vcoarsevec^T \Picoarse \mathbf{A} \Pifine \mathbf{A} \vcoarsevec.$$
Noting that the first term on the right-hand side is just the Galerkin term,
$$ \vcoarsevec^T \mathbb{P}_A \mathbf{A} \vcoarsevec = \vcoarsevec^T \mathbb{P}_G \mathbf{A} \vcoarsevec + \tau \vcoarsevec^T \Picoarse \mathbf{A} \Pifine \mathbf{A} \vcoarsevec.$$
As all matrices are self-adjoint, the Weyl inequalities can be used to relate the eigenvalues of $\vcoarsevec^T \mathbb{P}_A \mathbf{A} \vcoarsevec$ to $\vcoarsevec^T \mathbb{P}_G\mathbf{A} \vcoarsevec$. For $K \times K$ Hermitian positive semi-definite matrices $\mathbf{M}, \mathbf{H},$ and $\mathbf{P}$, where $\mathbf{M} = \mathbf{H} + \mathbf{P}$, the Weyl inequalities state,
$$\lambda_{M_i} \ge \lambda_{H_i},$$
where $\lambda_{M_1} \ge \lambda_{M_2} \ge \hdots \ge \lambda_{M_K}$ are the $K$ eigenvalues of $\mathbf{M}$ and $\lambda_{H_1} \ge \lambda_{H_2} \ge \hdots \ge \lambda_{H_K}$ are the $K$ eigenvalues of $\mathbf{H}$. In the present context, as $\vcoarsevec^T \Picoarse \mathbf{A} \Pifine \mathbf{A} \vcoarsevec (=\vcoarsevec^T \mathbf{A} \Pifine \mathbf{A} \vcoarsevec$) is positive semi-definite, it follows from the Weyl inequalities that,
$$\lambda_{A_i} \ge \lambda_{G_i}.$$  
%Note that
\end{proof}

\begin{corollary}\label{corollary:errorbound_symmetric}
Let $\mathbf{R}(\ufullvec) = \mathbf{A}\ufullvec$ be a linear time-invariant operator. If $\mathbf{A}$ is self-adjoint with all real negative eigenvalues, then $\lambda_{A_1} \le 0$ for $\tau \le \frac{|\gamma_1|}{\gamma_N^2}$, where $\gamma_1$ and $\gamma_N$ are the largest and smallest eigenvalues of $\mathbf{A}$, respectively, and $\lambda_{A_1}$ is the largest eigenvalue of $\vcoarsevec^T \mathbb{P}_A \mathbf{A}\vcoarsevec$.

\end{corollary}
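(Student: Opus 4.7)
The plan is to reuse the decomposition established in the proof of Theorem~\ref{theorem:errorbound_symmetric}, namely
$$\vcoarsevec^T \mathbb{P}_A \mathbf{A} \vcoarsevec = \vcoarsevec^T \mathbb{P}_G \mathbf{A} \vcoarsevec + \tau \vcoarsevec^T \mathbf{A} \Pifine \mathbf{A} \vcoarsevec,$$
and then bound the largest eigenvalue of the right-hand side by Weyl's sub-additivity inequality for Hermitian matrices: $\lambda_{\max}(\mathbf{M}_1+\mathbf{M}_2) \le \lambda_{\max}(\mathbf{M}_1)+\lambda_{\max}(\mathbf{M}_2)$. The task then reduces to bounding each of the two summands separately in terms of $\gamma_1$ and $\gamma_N$.

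For the first summand, I would use the fact that $\vcoarsevec$ has orthonormal columns, so for any unit $\mathbf{z}\in\mathbb{R}^K$, $\vcoarsevec\mathbf{z}$ is a unit vector in $\mathbb{R}^N$. The Rayleigh quotient characterization then gives $\mathbf{z}^T \vcoarsevec^T \mathbf{A}\vcoarsevec\,\mathbf{z} = (\vcoarsevec\mathbf{z})^T \mathbf{A}(\vcoarsevec\mathbf{z}) \le \gamma_1$, so $\lambda_{\max}(\vcoarsevec^T\mathbb{P}_G\mathbf{A}\vcoarsevec) = \lambda_{\max}(\vcoarsevec^T\mathbf{A}\vcoarsevec) \le \gamma_1 < 0$. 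For the second summand, I would observe that because $\Pifine$ is an orthogonal projector, $\Pifine = \Pifine^T\Pifine$, so
$$\vcoarsevec^T \mathbf{A} \Pifine \mathbf{A} \vcoarsevec = (\Pifine\mathbf{A}\vcoarsevec)^T(\Pifine\mathbf{A}\vcoarsevec) \succeq \mathbf{0},$$
and its largest eigenvalue equals $\|\Pifine\mathbf{A}\vcoarsevec\|_2^2$. Using sub-multiplicativity together with $\|\Pifine\|_2\le 1$, $\|\vcoarsevec\|_2 = 1$, and $\|\mathbf{A}\|_2 = \max_i|\gamma_i| = |\gamma_N| = \gamma_N$ under the stated sign convention (all eigenvalues negative with $\gamma_N$ the most negative), this yields $\lambda_{\max}(\tau\,\vcoarsevec^T\mathbf{A}\Pifine\mathbf{A}\vcoarsevec) \le \tau\gamma_N^2$.

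Combining the two bounds via Weyl gives $\lambda_{A_1} \le \gamma_1 + \tau\gamma_N^2$, and requiring the right-hand side to be non-positive yields $\tau \le -\gamma_1/\gamma_N^2 = |\gamma_1|/\gamma_N^2$, which is the claimed bound. There is no real obstacle here beyond picking the correct eigenvalue inequalities; the only subtlety is keeping the sign conventions consistent (recalling that $\gamma_1$ is the largest, i.e., least negative, eigenvalue and $\gamma_N$ is the smallest, i.e., most negative, so that $|\gamma_1| = -\gamma_1$ and $\|\mathbf{A}\|_2 = |\gamma_N|$). The proof therefore amounts to one application of Weyl's inequality plus two elementary norm/Rayleigh-quotient estimates.
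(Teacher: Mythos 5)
Your argument is correct and reaches the paper's bound, but by a modestly different path. The paper first applies the Poincar\'e separation theorem to bound the largest eigenvalue $\lambda_{A_1}$ of the $K\times K$ matrix $\vcoarsevec^T(\mathbf{A}+\tau\mathbf{A}\Pifine\mathbf{A})\vcoarsevec$ by the largest eigenvalue $\kappa_1$ of the full $N\times N$ matrix $\mathbf{A}+\tau\mathbf{A}\Pifine\mathbf{A}$; it then applies Weyl once to get $\kappa_1\le\gamma_1+\tau\chi_1$ with $\chi_1$ the largest eigenvalue of $\mathbf{A}\Pifine\mathbf{A}$, and finally bounds $\chi_1\le\gamma_N^2$ by a second Weyl argument built on the subtractive identity $\mathbf{A}\Pifine\mathbf{A}=\mathbf{A}^2-\mathbf{A}\Picoarse\mathbf{A}$. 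You instead apply Weyl directly in the $K$-dimensional projected space, splitting $\vcoarsevec^T\mathbb{P}_A\mathbf{A}\vcoarsevec$ into the Galerkin block $\vcoarsevec^T\mathbf{A}\vcoarsevec$ (bounded by $\gamma_1$ via the Rayleigh-quotient half of Poincar\'e) and the Gram matrix $(\Pifine\mathbf{A}\vcoarsevec)^T(\Pifine\mathbf{A}\vcoarsevec)$, whose largest eigenvalue you bound by $\|\Pifine\|_2^2\|\mathbf{A}\|_2^2\|\vcoarsevec\|_2^2=\gamma_N^2$ via sub-multiplicativity. So the order of Poincar\'e and Weyl is swapped, and the quadratic term is handled by a norm estimate rather than a subtractive Weyl step; the norm-based route is arguably a bit more transparent and avoids introducing the intermediate spectrum $\chi_i$. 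In principle your projected quantity $\|\Pifine\mathbf{A}\vcoarsevec\|_2^2$ is at most $\chi_1$ and so could give a sharper bound, but both proofs ultimately settle for $\gamma_N^2$ and thus yield identical conclusions. The sign bookkeeping ($\gamma_1$ least negative, $|\gamma_N|=\|\mathbf{A}\|_2$) is handled correctly.
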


\begin{proof}

We again expand the APG projection and simplify,
$$
\vcoarsevec^T \mathbb{P}_A \mathbf{A} \vcoarsevec = \vcoarsevec^T \Picoarse (\mathbf{I} + \tau A \Pifine) \mathbf{A} \vcoarsevec = \vcoarsevec^T (\mathbf{A} + \tau \mathbf{A} \Pifine \mathbf{A}) \vcoarsevec
$$
Now, let $\lambda_{A_1} \ge \lambda_{A_2} \ge \hdots \ge \lambda_{A_K}$ be the $K$ eigenvalues of $\vcoarsevec^T (\mathbf{A} + \tau \mathbf{A} \Pifine \mathbf{A}) \vcoarsevec$, and let  $\kappa_1 \ge \kappa_2 \ge \hdots \ge \kappa_N$ be the $N$ eigenvalues of $\mathbf{A} + \tau \mathbf{A} \Pifine \mathbf{A}$. Recognizing that $\vcoarsevec$ is semi-orthogonal, we apply the Poincar\'e separation theorem, 
\begin{equation}
    \kappa_i \ge \lambda_{A_i} \ge \kappa_{N-K+i}, \quad \text{for } i = 1,2,\hdots,K
\end{equation}
To ensure that $\lambda_{A_1} \le 0$, we need only show under what conditions $\kappa_1 \le 0$. Thus, we seek bounds on $\tau$ such that $\kappa_1 \le 0$. 

As $\mathbf{A}$ and $\tau \mathbf{A}\Pifine \mathbf{A}$ are both self-adjoint, we can use the Weyl inequalities to bound the smallest eigenvalue of $\mathbf{A} + \tau \mathbf{A} \Pifine \mathbf{A}$. Let $\chi_1 \ge \chi_2 \ge \hdots \ge \chi_N$ be the $N$ eigenvalues of  $\mathbf{A} \Pifine \mathbf{A}$, thus,
\begin{equation}\label{eq:weyl_large}
\kappa_1 \le \gamma_1 + \tau \chi_1,
\end{equation}
To ensure that $\kappa_1 \le 0$, we specify that $\lambda_1 + \tau \chi_1 \le 0$. Noting that $\gamma_1 \le 0$ leads to the inequality,
\begin{equation}\label{eq:ineq_1}
    \lambda_{A_1} \le 0; \qquad \forall \; \tau \le \frac{|\gamma_1|}{\chi_1}
\end{equation}

To write the bounds on $\tau$ in Eq.~\ref{eq:ineq_1} into a more intuitive form, the $\chi_i$ can be related to $\lambda_i$ through the the Weyl inequalities. Recall that $\chi_i$ are the eigenvalues of,
$$\mathbf{A} \Pifine \mathbf{A}  =  \mathbf{A}^2 - \mathbf{A} \Picoarse \mathbf{A}.$$
As $\mathbf{A} \Picoarse \mathbf{A}$ is positive semi-definite, one has,
\begin{equation*}\label{eq:weyl3}
\chi_1 \le \gamma_N^2.
\end{equation*}
Inserting the above inequality into Eq.~\ref{eq:ineq_1} obtains the desired result,
\begin{equation}\label{eq:tau_bound}
\lambda_{A_1} \le 0; \qquad \forall \; \tau \le \frac{| \gamma_1 | }{\gamma_N^2}.
\end{equation}
This upper bound on $\tau$ is more conservative than Eq.~\ref{eq:ineq_1}, though both are equally valid.

\end{proof}

\subsubsection{Discussion}\label{sec:theorem_discussion}
Theorems~\ref{theorem:apg_error} and~\ref{theorem:errorbound_symmetric} contain three interesting results that are worth discussing. First, as discussed in Corollary~\ref{corollary:resid_bound}, Theorem~\ref{theorem:apg_error} shows that, in the limit $\tau \rightarrow 0^+$, the upper-bound provided in Theorem~\ref{theorem:residualbounds} on the error introduced at time $t$ in the APG ROM due to the fine-scales is less than that introduced in the Galerkin ROM (per unit $\tau$). This is an appealing result as APG is derived as a subgrid-scale model. Two remarks are worth making regarding this result. First, while the result was demonstrated in the limit $\tau \rightarrow 0^+$, it will hold so long as the norm of the truncation error in the quadrature approximation is less than the norm of the integral it is approximating; i.e. the approximation is doing a better job than neglecting the integral entirely. Second, the result derived in Theorem~\ref{theorem:apg_error} does not \textit{directly} translate to showing that,
\begin{equation}\label{eq:resid_discussion}
\norm{\vcoarsevec^T \mathbb{P}_A \mathbf{r}_F(\ucoarsevec_F(t)) } \le \norm{\vcoarsevec^T \mathbb{P}_G \mathbf{r}_F(\ucoarsevec_F(t)) }.
\end{equation}
This is a consequence of Theorem~\ref{theorem:residualbounds}, in which the integral that defines the fine-scale solution was split into two intervals. The APG ROM attempts to approximate the first integral, while the Galerkin ROM ignores both terms. Theorem~\ref{theorem:apg_error} showed that, in the limit $\tau \rightarrow 0^+$, the APG approximation to the first integral is better than in the case of Galerkin (i.e., the APG approximation is better than no approximation). The only time that the result provided in Theorem~\ref{theorem:apg_error} will \textit{not} translate to APG providing a better approximation to the \textit{entire} integral (and thus proving Eq.~\ref{eq:resid_discussion}), is when integration over the second interval ``cancels out" the integration over the first interval. To make this idea concrete, consider the integral,
$$\int_0^{2 \pi} \sin(x) dx = \int_0^{\pi} \sin(x) dx  + \int_{\pi}^{2 \pi} \sin(x) dx .$$
Clearly, $\int_0^{2 \pi} \sin(x) dx = 0$. If the entire integral was approximated using just the interval $0$ to $\pi$ (which is analogous to APG), then one would end up with the approximation  $\int_0^{2 \pi} \sin(x) dx \approx \int_0^{\pi} \sin(x) dx = 2$. Alternatively, if one were to ignore the integral entirely (which is analogous to Galerkin) and make the approximation $\int_0^{2 \pi} \sin(x) dx \approx 0$  (which in this example is exact), a better approximation would be obtained.

 The next interesting result is presented in Theorem~\ref{theorem:errorbound_symmetric}, where it is shown that for a self-adjoint system with negative eigenvalues, the eigenvalues associated with the APG ROM error equation are \textit{greater} than the Galerkin ROM. This implies that APG is \textit{less} dissipative than Galerkin, and means that errors may be slower to decay in time. Thus, while Theorem~\ref{theorem:apg_error} shows that the \textit{a priori} contributions to the error due to the closure problem may be smaller in the APG ROM than in the Galerkin ROM, the errors that \textit{are} incurred may be slower to decay. Finally, Corollary~\ref{corollary:errorbound_symmetric} shows that, for self-adjoint systems with negative eigenvalues, the bounds on the parameter $\tau$ such that all eigenvalues associated with the evolution of the error in APG remain negative depends on the spectral content of the Jacobian of $\mathbf{A}$. This observation has been made heuristically in Ref~\cite{parishMZ1}. Although the upper bound on $\tau$ in Eq.~\ref{eq:tau_bound} is very conservative due to the repeated use of inequalities, it provides insight into the selection and behaviour of $\tau$.

\subsection{Selection of Memory Length $\tau$}\label{sec:selecttau}
The APG method requires the specification of the parameter $\tau$. Theorem~\ref{theorem:errorbound_symmetric} showed that, for a self-adjoint linear system, bounds on the value of $\tau$ are related to the eigenvalues of the Jacobian of the full-dimensional right-hand side operator. While such bounds provide intuition into the behavior of $\tau$, they are not particularly useful in the selection of an optimal value of $\tau$ as they 1.) are conservative due to repeated use of inequalities and 2.) require the eigenvalues of the full right-hand side operator, which one does not have access to in a ROM. Further, the bounds were derived for a self-adjoint linear system, and the extension to non-linear systems is unclear. 

In practice, it is desirable to obtain an expression for $\tau$ using only the coarse-scale Jacobian, $\vcoarsevec^T \mathbf{J}[\ucoarsevec]\vcoarsevec$.
In Ref.~\cite{parishMZ1}, numerical evidence showed a strong correlation between the optimal value of $\tau$ and this coarse-scale Jacobian. Based on this numerical evidence and the analysis in the previous section, the following heuristic for selecting $\tau$ is used:
\begin{equation}\label{eq:taueq}
\tau = \frac{C}{\rho (\vcoarsevec^T \mathbf{J}[\ucoarsevec] \vcoarsevec)},
\end{equation}
where $C$ is a model parameter and $\rho(\cdot)$ indicates the spectral radius. In Ref.~\cite{parishMZ1}, $C$ was reported to be $0.2.$  In the numerical experiments presented later in this manuscript, the sensitivity of APG to the value of $\tau$ and the validity of Eq.~\ref{eq:taueq} are examined. 

Similar to the selection of $\tau$ in the APG method, the LSPG method requires the selection of an appropriate time-step~\cite{carlberg_lspg_v_galerkin}. In practice, this fact can be problematic as finding an optimal time-step for LSPG which minimizes error may result in a small time-step and, hence, an expensive simulation. The selection of the parameter $\tau$, on the other hand, does not impact the computational cost of the APG ROM.

\section{Implementation and Computational Cost of the Adjoint Petrov--Galerkin Method}\label{sec:cost}

This section details the implementation of the Adjoint Petrov--Galerkin ROM for simple time integration schemes. Algorithms for explicit and implicit time integration schemes are provided, and the approximate cost of each method in floating-point operations (FLOPs) is analyzed. Here, a FLOP refers to any floating-point addition or multiplication; no distinction is made between the computational cost of either operation. The notation used here is as follows: $N$ is the full-order number of degrees of freedom, $K$ is the number of modes retained in the POD basis, and $\omega N$ is the number of FLOPs required for one evaluation of the right-hand side, $\mathbf{R}(\ucoarsevec(t))$. For sufficiently complex problems, $\omega$ is usually on the order of $\MC{O}(10) < \omega < \MC{O}(1000)$. The analysis presented in this section does not consider hyper-reduction. The analysis can be approximately extended to hyper-reduction by replacing the full-order degrees of freedom with the dimension of the hyper-reduced right-hand side.\footnote{An accurate cost-analysis for hyper-reduced ROMs should consider over sampling of the right-hand side and the FOM stencil.}

\subsection{Explicit Time Integration Schemes}
This section explores the cost of the APG method within the scope of explicit time integration schemes. For simplicity, the analysis is carried out only for the explicit Euler scheme. The computational cost of more sophisticated time integration methods, such as Runge-Kutta and multistep schemes, is generally a  proportional scaling of the cost of the explicit Euler scheme. Algorithm~\ref{alg:alg_apg_exp} provides the step-by-step procedure for performing an explicit Euler update to the Adjoint Petrov--Galerkin ROM. Table~\ref{tab:alg_apg_exp} provides the approximate floating-point operations for the steps reported in Algorithm~\ref{alg:alg_apg_exp}. The algorithm for an explicit update to the Galerkin ROM, along with the associated FLOP counts, is provided in Algorithm~\ref{alg:alg_g_exp} and Table~\ref{tab:alg_g_exp} in Appendix~\ref{appendix:algorithms}. As noted previously, LSPG reverts to the Galerkin method for explicit schemes, and so is not detailed in this section. Table~\ref{tab:alg_apg_exp} shows that, in the case that $K \ll N$ (standard for a ROM) and $\omega \gg 1$ (sufficiently complex right-hand side), the Adjoint Petrov--Galerkin ROM is approximately twice as expensive as the Galerkin ROM.

%Before proceeding, we make several remarks with respect to the cost of the Adjoint Petrov--Galerkin method for implicit time-schemes. For non-linear reduced-order models, implicit solvers require the solution of a dense non-linear system at each time-step. The standard way to solve such a system is with Newton's method. Newton's method requires the computation of the linearized residual.  
%$$\frac{ Cost_{APG} }{ Cost_G} \approx 2.$$

\begin{algorithm}
\caption{Algorithm for an explicit Euler update for the APG ROM}
\label{alg:alg_apg_exp}
Input: $\acoarsevec^n$\;
\newline
Output: $\acoarsevec^{n+1}$\;
\newline
Steps:
\begin{enumerate}
\item Compute the state from the generalized coordinates, $\ucoarsevec^n =\vcoarsevec \acoarsevec^{n+1}$
\item Compute the right-hand side from the state, $\mathbf{R}(\ucoarsevec^n)$
\item Compute the projection of the right-hand side, $\Picoarse \mathbf{R}(\ucoarsevec^n) = \vcoarsevec \vcoarsevec^T \mathbf{R}(\ucoarsevec^n)$
\item Compute the orthogonal projection of the right-hand side, $\Pifine \mathbf{R}(\ucoarsevec^n) =  \mathbf{R}(\ucoarsevec^n) -  \Picoarse \mathbf{R}(\ucoarsevec^n)$
\item Compute the action of the Jacobian on $\Pifine \mathbf{R}(\ucoarsevec^n)$ using either of the two following strategies:
    \begin{enumerate}
    \item Finite difference approximation:
    \begin{equation*}
    \mathbf{J}[\ucoarsevec^n] \Pifine \mathbf{R}(\ucoarsevec^n) \approx \frac{1}{\epsilon} \Big[ \mathbf{R}\big(\ucoarsevec^n + \epsilon \Pifine  \mathbf{R}(\ucoarsevec^n) \big) - \mathbf{R}(\ucoarsevec^n  ) \Big], 
    \end{equation*}
    where $\epsilon$ is a small constant value, usually  $\sim \MC{O}(10^{-5})$.
    \item Exact linearization:
    \begin{equation*}
    \mathbf{J}[\ucoarsevec^n] \Pifine \mathbf{R}(\ucoarsevec^n) = \mathbf{R'}[\ucoarsevec^n](\Pifine \mathbf{R}(\ucoarsevec^n)),
    \end{equation*}
    where $\mathbf{R}'[\ucoarsevec^n]$ is right-hand side operator linearized about $\ucoarsevec^n$.
    \end{enumerate}
%$$    \mathbf{J}[\ucoarsevec] \Pifine \mathbf{R}(\ucoarsevec) = \frac{1}{\epsilon} \Big[ \mathbf{R}\big(\ucoarsevec + \epsilon \Pifine  \mathbf{R}(\ucoarsevec) \big) - \mathbf{R}(\ucoarsevec  ) \Big] + \MC{O}(\epsilon^2)$$
\item Compute the full right-hand side:  $\mathbf{R}(\ucoarsevec^n) +  \tau \mathbf{J}[\ucoarsevec^n] \Pifine \mathbf{R}(\ucoarsevec^n)$

\item Project: $\vcoarsevec^T \bigg[  \mathbf{R}(\ucoarsevec^n) +  \tau \mathbf{J}[\ucoarsevec^n] \Pifine \mathbf{R}(\ucoarsevec^n) \bigg]$
%\item Multiply the projected right-hand side by the time-step, $\Delta t \vcoarsevec^T \bigg[  \mathbf{R}(\ucoarsevec^n) +  \tau \mathbf{J}[\ucoarsevec] \Pifine \mathbf{R}(\ucoarsevec^n) \bigg]$
\item Update the state $\acoarsevec^{n+1} = \acoarsevec^n + \Delta t \vcoarsevec^T \bigg[  \mathbf{R}(\ucoarsevec^n) +  \tau\mathbf{J}[\ucoarsevec^n] \Pifine \mathbf{R}(\ucoarsevec^n) \bigg]$
\end{enumerate}
\end{algorithm}

\begin{table}
\begin{tabular}{p{7cm} p{8cm}}
\hline
Step in Algorithm~\ref{alg:alg_apg_exp}& Approximate FLOPs \\
\hline
1    & $2 N K - N$  \\
2    & $\omega N$  \\
3    & $4 N K - N - K$  \\
4    & $N $  \\
5    & $(\omega + 4)N  $  \\
6    & $2N $  \\
7    & $2NK - K $  \\
8    & $2K $  \\
%9    & $K $  \\
\hline
Total    & $8 N K + (2\omega + 5) N$ \\
Total for Galerkin Method & $4 N K + (\omega-1) N + K$ \\
\hline
\end{tabular}
\caption{Approximate floating-point operations for an explicit Euler update to the Adjoint Petrov--Galerkin method reported in Algorithm~\ref{alg:alg_apg_exp}. The total FLOP count for the Galerkin ROM with an explicit Euler update is additionally reported for comparison. A full description of the Galerkin update is provided in Appendix~\ref{appendix:algorithms}.}
\label{tab:alg_apg_exp}
\end{table}

\subsection{Implicit Time Integration Schemes}
This section evaluates the computational cost of the Galerkin, Adjoint Petrov--Galerkin, and Least-Squares Petrov--Galerkin methods for implicit time integration schemes. For non-linear systems, implicit time integration schemes require the solution of a non-linear algebraic system at each time-step. Newton's method, along with a preferred linear solver, is typically employed to solve the system. For simplicity, the analysis provided in this section is carried out for the implicit Euler time integration scheme along with Newton's method to solve the non-linear system. Before proceeding, the full-order residual, Galerkin residual, and APG residual at time-step $(n + 1)$ are denoted as,
\begin{align*}
 &\mathbf{r}(\vcoarsevec \acoarsevec^{n+1}) = \vcoarsevec \acoarsevec^{n+1} - \vcoarsevec \acoarsevec^n - \Delta t \mathbf{R}(\vcoarsevec \acoarsevec^{n+1}),\\
&\mathbf{r}_G(\acoarsevec^{n+1}) = \acoarsevec^{n+1} - \acoarsevec^n - \Delta t \vcoarsevec^T \mathbf{R}(\vcoarsevec \acoarsevec^{n+1}),\\
&\mathbf{r}_{A}(\acoarsevec^{n+1}) = \acoarsevec^{n+1} - \acoarsevec^n - \Delta t \vcoarsevec^T\bigg[\mathbf{R}(\vcoarsevec \acoarsevec^{n+1}) + \tau \mathbf{J}[\ucoarsevec]\Pifine \mathbf{R}(\vcoarsevec \acoarsevec^{n+1} ) \bigg].
\end{align*}
As the future state, $\acoarsevec^{n+1}$, is unknown, we denote an intermediate state, $\acoarsevec_k$, that is updated after every Newton iteration until some convergence criterion is met. %Newton's method seeks to minimize the first-order Taylor expansion of the residual,
Newton's method is defined by the iteration,
%$$\mathbf{r}(\acoarsevec_{k+1}) = \mathbf{r}(\acoarsevec_k) + \frac{\partial %\mathbf{r}(\acoarsevec_k)}{\partial \acoarsevec_k} \big[ \acoarsevec_{k+1} - %\acoarsevec_k\big] = 0,$$
%which leads to the linear system,
\begin{equation}\label{eq:newton_linear}
    \frac{\partial \mathbf{r}(\acoarsevec_k)}{\partial \acoarsevec_k} \big[ \acoarsevec_{k+1} - \acoarsevec_k\big] = - \mathbf{r}(\acoarsevec_k).
\end{equation}
Newton's method solves Eq.~\ref{eq:newton_linear} for the change in the state, $\acoarsevec_{k+1} - \acoarsevec_k$, for $k = 1,2,\hdots$, until the residual converges to a sufficiently-small number. For a ROM, both the assembly and solution of this linear system is the dominant cost of an implicit method.
%Solving for $ \big[ \acoarsevec_{k+1} - \acoarsevec_k\big] $, the intermediate solution may be updated and the iteration process repeated until the residual converges to a sufficiently-small number. 
%A key point worth noting is that, in the analysis that follows, it is assumed that one does not have access to the residual Jacobian, $\frac{\partial \mathbf{r}(\acoarsevec_k)}{\partial \acoarsevec_k}$. As such, the cost of the Jacobian computation is $\approx \omega N K $ FLOPs.  

Two methods are considered for the solution to the non-linear algebraic system arising from implicit time discretizations of the G and APG ROMs: Newton's method with direct Gaussian elimination and Jacobian-Free Newton-Krylov GMRES. The Gauss-Newton method with Gaussian elimination is considered for the solution to the least-squares problem arising in LSPG.

Algorithm~\ref{alg:alg_apg_imp} provides the step-by-step procedures for performing an implicit Euler update to the Adjoint Petrov--Galerkin ROM with the use of Newton's method and Gaussian elimination. Table~\ref{tab:alg_apg_imp} provides the approximate floating-point operations for the steps reported in these algorithms. Analogous results for the Galerkin and LSPG ROMs are reported in Algorithms~\ref{alg:alg_g_imp} and~\ref{alg:alg_LSPG}, and Tables~\ref{tab:alg_g_imp} and~\ref{tab:alg_LSPG} in Appendix~\ref{appendix:algorithms}.
In the limit that $K \ll N$ and $\omega \gg 1$, the total FLOP counts reported show that APG is twice as expensive as both the LSPG and Galerkin ROMs. It is observed that the dominant cost for all three methods lies in the computation of the low-dimensional residual Jacobian. Computation of the low-dimensional Jacobian requires $K$ evaluations of the unsteady residual. Depending on values of $\omega$, $N$, and $K$, this step can consist of over $50\%$ $(K \ll N)$ of the CPU time.\footnote{It is noted that the low-dimensional Jacobian can be computed in parallel.}

To avoid the cost of computing the low-dimensional Jacobian required in the linear solve at each Netwon step, the Galerkin and APG ROMs can make use of Jacobian-Free Netwon-Krylov (JFNK) methods to solve the linear system, opposed to direct methods such as Gaussian elimination. JFNK methods are iterative methods that allow one to circumvent the expense associated with computing the full low-dimensional Jacobian. Instead, JFNK methods only compute the \textit{action} of the Jacobian on a vector at each iteration of the linear solve. This can drastically decrease the cost of the implicit solve. JFNK utilizing the Generalized Minimal Residual (GMRES) method~\cite{gmres}, for example, is guaranteed to converge to the solution $\acoarsevec_k$ in at most $K$ iterations. It takes $K$ residual evaluations just to form the Jacobian required for direct methods.

The LSPG method is formulated as a non-linear least-squares problem. The use of Jacobian-free methods to solve non-linear least-squares problems is significantly more challenging. The principle issue encountered in attempting to use Jacobian-free methods for such applications it that one requires the action of the \textit{transpose} of the residual Jacobian on a vector. This quantity cannot be computed via a standard finite difference approximation or linearization. It is only recently that true Jacobian-free methods have been utilized for solving non-linear least-squares problems. In Ref~\cite{nlls_JacobianFree}, for example, automatic differentiation is utilized to compute the action of the transposed Jacobian on a vector. Due to the challenges associated with Jacobian-free methods for non-linear least-squares problems, this method is not considered here as a solution technique for LSPG.

Algorithm~\ref{alg:alg_apg_jfnk} and Table~\ref{tab:alg_apg_jfnk} report the algorithm and FLOPs required for an implicit Euler update to APG using JFNK GMRES. The term $\eta \le K$ is the number of iterations needed for convergence of the GMRES solver at each Newton iteration. For a concise presentation, the same update for the Galerkin ROM is not presented. Figure~\ref{fig:implicitcost} shows the ratio of the cost of the various implicit ROMs as compared to the Galerkin ROM solved with Gaussian elimination. The standard LSPG method is seen to be approximately the same cost of Galerkin, while APG is seen to be approximately $2$x the cost of Galerkin. The success of the JFNK methods depends on the number of GMRES iterations required for convergence. If $\eta = K$, which is the maximum number of iterations required for GMRES, the cost of JFNK methods is seen to be the same as their direct-solve counterparts. For cases where JFNK converges at a rate of $\eta < K,$ the iterative methods out-perform their direct-solve counterparts.

The analysis presented here shows that, for a given basis dimension, the Adjoint Petrov--Galerkin ROM is approximately twice the cost of the Galerkin ROM for both implicit and explicit solvers. In the implicit case, the APG ROM utilizing a direct linear solver is approximately 2x the cost of LSPG. It was highlighted, however, that APG can be solved via JFNK methods. For cases where one either doesn't have access to the full Jacobian, or the full Jacobian can't be stored, JFNK methods can significantly decrease the ROM cost. The use of JFNK methods within the LSPG approach is more challenging due to the presence of the transpose of the residual Jacobian. Lastly it is noted that, although hyper-reduction can decrease the cost of a residual evaluation, it does not entirely alleviate the cost of forming the Jacobian.

\begin{figure}
\begin{center}
\begin{subfigure}[t]{0.65\textwidth}
\includegraphics[trim={0cm 0cm 0cm 0cm},clip,width=1.\linewidth]{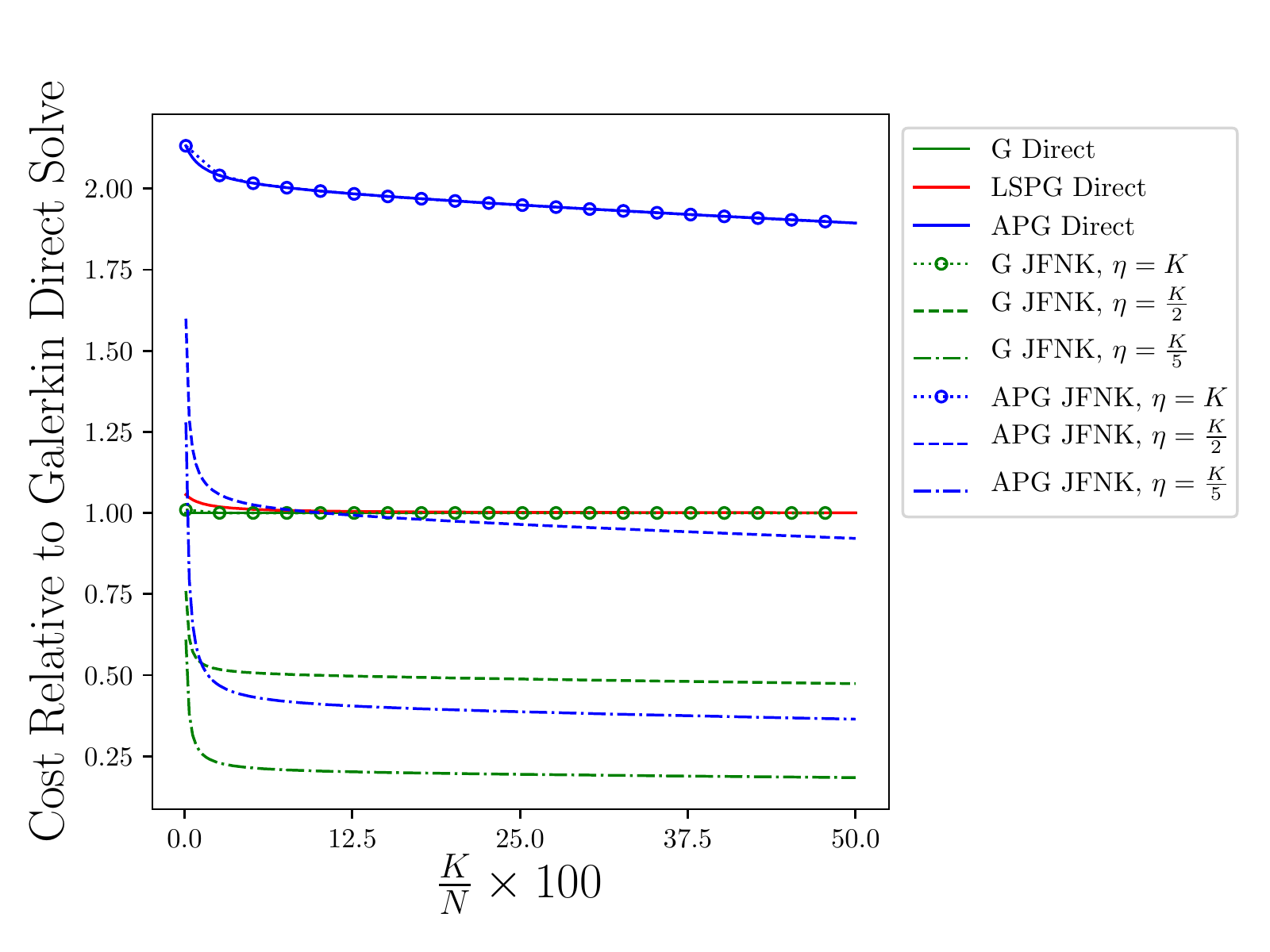}
\end{subfigure}
\end{center}
\caption{Estimates of the G, APG, and LSPG reduced-order models for an implicit Euler update. This plot is generated for values of $N=1000$ and $\omega = 50$, and $\eta = \{K,K/2,K/5\},$ where $\eta$ is the total number of iterations required for the GMRES solver at each Newton step.}
\label{fig:implicitcost}
\end{figure}

\begin{algorithm}
\caption{Algorithm for an implicit Euler update for the APG ROM using Newton's Method with Gaussian Elimination}
\label{alg:alg_apg_imp}
%\SetKwInOut{Input}{Input}\SetKwInOut{Output}{Output}
Input: $\acoarsevec^n$, residual tolerance $\xi$ \;
\newline
Output: $\acoarsevec^{n+1}$\;
\newline
Steps:
\begin{enumerate}
\item Set initial guess, $\acoarsevec_k$
\item  Loop while $\mathbf{r}^k > \xi$
\begin{enumerate}
    \item Compute the state from the generalized coordinates, $\ucoarsevec_k = \vcoarsevec \acoarsevec_k$
    \item Compute the right-hand side from the full state, $\mathbf{R}(\ucoarsevec_k)$
    \item Compute the projection of the right-hand side, $\Picoarse \mathbf{R}(\ucoarsevec^n) = \vcoarsevec \vcoarsevec^T \mathbf{R}(\ucoarsevec^n)$
    \item Compute the orthogonal projection of the right-hand side, $\Pifine \mathbf{R}(\ucoarsevec_k) =  \mathbf{R}(\ucoarsevec_k) -  \Picoarse \mathbf{R}(\ucoarsevec_k)$
    \item Compute the action of the right-hand side Jacobian on $\Pifine \mathbf{R}(\ucoarsevec_k)$, as in Alg.~\ref{alg:alg_apg_exp}.
    \item Compute the modified right-hand side, $ \mathbf{R}(\vcoarsevec \acoarsevec_k) + \tau \mathbf{J}[\ucoarsevec]\Pifine \mathbf{R}(\vcoarsevec \acoarsevec_k )$
    \item Project the modified right-hand side,  $\vcoarsevec^T\Big[\mathbf{R}(\vcoarsevec \acoarsevec_k) + \tau \mathbf{J}[\ucoarsevec]\Pifine \mathbf{R}(\vcoarsevec \acoarsevec_k ) \Big]$
    \item Compute the APG residual, $\mathbf{r}_A(\acoarsevec_k) = \acoarsevec_k - \acoarsevec^n - \Delta t \vcoarsevec^T\Big[\mathbf{R}(\vcoarsevec \acoarsevec_k) + \tau \mathbf{J}[\ucoarsevec]\Pifine \mathbf{R}(\vcoarsevec \acoarsevec_k ) \Big]$
    \item Compute the residual Jacobian, $\frac{\partial \mathbf{r}(\acoarsevec_k)}{\partial \acoarsevec_k}$
    \item Solve the linear system via Gaussian Elimination: $\frac{\partial \mathbf{r}(\acoarsevec_k)}{\partial \acoarsevec_k} \Delta  \acoarsevec = - \mathbf{r}(\acoarsevec_k)$
    \item Update the state: $\acoarsevec_{k+1} = \acoarsevec_k + \Delta \acoarsevec$
    \item $k = k + 1$
\end{enumerate}
\item Set final state, $\acoarsevec^{n+1} = \acoarsevec_k$
\end{enumerate}
\end{algorithm}

\begin{table}[]
\centering
\begin{tabular}{p{7cm} p{8cm}}
\hline
Step in Algorithm~\ref{alg:alg_apg_imp}& Approximate FLOPs \\
\hline
2a    & $2 N K - N $ \\
2b    & $ \omega N $ \\
2c    & $4 N K - N - K$  \\
2d    & $ N $  \\
2e    & $ (\omega + 4) N $ \\
2f    & $ 2N $ \\
2g    & $ 2NK - K $ \\
2h    & $ 3K $ \\
2i    & $ (2\omega + 5) NK + K^2 + 8NK^2 $ \\
2j    & $ K^3 $ \\
2k    & $ K $ \\
\hline
Total & $ (2\omega + 5)N + 2K + (2\omega + 13) NK + K^2 + 8NK^2 + K^3 $ \\
Galerkin ROM FLOP count & $ (\omega - 1)N + 3K + (\omega + 3)NK + 2K^2 + 4NK^2 + K^3 $ \\
LSPG ROM FLOP count& $ (\omega + 2)N + (\omega + 6) NK - K^2 + 4NK^2 + K^3 $ \\

\end{tabular}
\caption{Approximate floating-point operations for one Newton iteration for the implicit Euler update to the Adjoint Petrov--Galerkin method reported in Algorithm~\ref{alg:alg_apg_imp}. FLOP counts for the Galerkin ROM and LSPG ROM with an implicit Euler update are additionally reported for comparison. A full description of the Galerkin and LSPG ROM updates are provided in Appendix~\ref{appendix:algorithms}}
\label{tab:alg_apg_imp}
\end{table}

%============= JFNK =========================
\begin{algorithm}
\caption{Algorithm for an implicit Euler update for the APG ROM using JFNK GMRES}
\label{alg:alg_apg_jfnk}
%\SetKwInOut{Input}{Input}\SetKwInOut{Output}{Output}
Input: $\acoarsevec^n$, residual tolerance $\xi$ \;
\newline
Output: $\acoarsevec^{n+1}$\;
\newline
Steps:
\begin{enumerate}
\item Set initial guess, $\acoarsevec_k$
\item  Loop while $\mathbf{r}^k > \xi$
\begin{enumerate}

    \refstepcounter{enumii}\item[$(a\text{--}h)$] Compute steps 2a through 2h in Algorithm~\ref{alg:alg_apg_imp}
    \setcounter{enumii}{8}
    \item Solve the linear system,  $\frac{\partial \mathbf{r}(\acoarsevec_k)}{\partial \acoarsevec_k} \Delta \acoarsevec_k = \mathbf{r}_A(\acoarsevec_k)$ using Jacobian-Free GMRES
    \item Update the state: $\acoarsevec_{k+1} = \acoarsevec_k + \Delta \acoarsevec$
    \item $k = k + 1$
\end{enumerate}
\item Set final state, $\acoarsevec^{n+1} = \acoarsevec_k$
\end{enumerate}
\end{algorithm}

\begin{table}[]
\centering
\begin{tabular}{p{6cm} p{9cm}}
\hline
Step in Algorithm~\ref{alg:alg_apg_jfnk}& Approximate FLOPs \\
\hline
2a    & $2 N K - N $ \\
2b    & $ \omega N $ \\
2c    & $4 N K - N - K$  \\
2d    & $ N $  \\
2e    & $ (\omega + 4) N $ \\
2f    & $ 2N $ \\
2g    & $ 2NK - K $ \\
2h    & $ 3K $ \\
2i    & $ (2\omega + 5) N \eta + K \eta  + 8N K \eta  + \eta^2 K $\\
2k    & $ K $ \\
\hline
Total & $ \big((2\eta + 2) \omega + 5\eta + 5) \big)N + (\eta^2 + \eta + 2)K + (8\eta + 8) NK $ \\
\end{tabular}
\caption{Approximate floating-point operations for one Newton iteration for the implicit Euler update to the Adjoint Petrov--Galerkin method using Jacobian-Free GMRES reported in Algorithm~\ref{alg:alg_apg_jfnk}.}
\label{tab:alg_apg_jfnk}
\end{table}

\section{Numerical Examples}\label{sec:numerical}

Applications of the APG method are presented  for ROMs of compressible flows: the 1D Sod shock tube problem and 2D viscous flow over a cylinder. In both problems, the test bases are chosen via POD. The shock tube problem highlights the improved stability and accuracy of the APG method over the standard Galerkin ROM, as well as improved performance over the LSPG method. The impact of the choice of $\tau$ (APG) and $\Delta t$ (LSPG) time-scales are also explored. The cylinder flow experiment examines a more complex problem and assesses the predictive capability of APG in comparison with Galerkin and LSPG ROMs. The effect of the choice of $\tau$ on simulation accuracy is further explored.  

\subsection{Example 1: Sod Shock Tube with reflection}
The first case considered is the Sod shock tube, described in more detail in \cite{sod}. The experiment simulates the instantaneous bursting of a diaphragm separating a closed chamber of high-density, high-pressure gas from a closed chamber of low-density, low pressure gas. This generates a strong shock, a contact discontinuity, and an expansion wave, which reflect off the shock tube walls at either end and interact with each other in complex ways. The system is described by the one-dimensional compressible Euler equations with the initial conditions,
\begin{comment}
\begin{equation}\label{eq:euler_1D}
    \frac{\partial \ufullvec}{\partial t} + \frac{\partial \mathbf{f}}{\partial x} = 0, \quad
    \ufullvec = 
    \begin{Bmatrix} \rho \\ \rho u \\ \rho E \end{Bmatrix}, \quad 
    \mathbf{f} = \begin{Bmatrix} \rho u \\ \rho u^2 + p \\  u(\rho E + p) \end{Bmatrix}.
\end{equation}
The problem setup is given by the initial conditions,
\end{comment} 

\begin{equation*}
\rho = 
\begin{cases} 
      1 & x\leq 0.5 \\
      0.125 & x > 0.5 
   \end{cases},
\qquad
p = 
\begin{cases} 
      1 & x\leq 0.5 \\
      0.1 & x > 0.5 
   \end{cases},
\qquad
u = 
\begin{cases} 
      0 & x\leq 0.5 \\
      0 & x > 0.5 
   \end{cases},
\end{equation*}  
with $x \in [0,1]$. Impermeable wall boundary conditions are enforced at x = 0 and x = 1.

\subsubsection{Full-Order Model}
The 1D compressible Euler equations are solved using a finite volume method and explicit time integration. The domain is partitioned into 1,000 cells of uniform width. The finite volume method uses the first-order Roe flux~\cite{roescheme} at the cell interfaces. A strong stability-preserving RK3 scheme~\cite{SSP_RK3} is used for time integration. The solution is evolved for $t \in [0.0,1.0]$ with a time-step of $\Delta t = 0.0005$, ensuring CFL$\leq 0.75$ for the duration of the simulation. The solution is saved every other time-step, resulting in 1,000 solutions snapshots for each conserved variable.

\subsubsection{Solution of the Reduced-Order Model}
Using the FOM data snapshots, trial bases for the ROMs are constructed via the proper orthogonal decomposition (POD) approach. A separate basis is constructed for each conserved variable. The complete basis construction procedure is detailed in Appendix~\ref{appendix:basisconstruction}. Once a coarse-scale trial basis $\vcoarsevec$ is built, a variety of ROMs are evaluated according to the following formulations:

\begin{enumerate}
\item Galerkin ROM:
\begin{equation*}\label{eq:galerkin_ROM}
\vcoarsevec^T \bigg( \frac{d \ucoarsevec}{dt} - \mathbf{R}(\ucoarsevec) \bigg) = 0, \qquad t \in [0,1].
\end{equation*}
%\textit{Remark: Note the use of familiar notation. Here, $\ucoarsevec = \vcoarsevec \vcoarsevec^T \ufullvec$, and $\mathbf{R}(\ucoarsevec)$ represents the Roe flux discretization of $-\frac{\partial \mathbf{f}}{\partial x}$ with applicable boundary conditions.}

\item Adjoint Petrov--Galerkin ROM:
\begin{equation*}\label{eq:MZPG_ROM}
\vcoarsevec^T\bigg(\mathbf{I} + \tau  \mathbf{J}[\ucoarsevec] \Pifine \bigg)  \bigg( \frac{d \ucoarsevec}{dt} - \mathbf{R}(\ucoarsevec) \bigg) = 0 , \qquad t \in [0,1].
\end{equation*}
\textit{Remark: The Adjoint Petrov--Galerkin ROM requires specification of $\tau$.}

\item Least-Squares Petrov--Galerkin ROM (Implicit Euler Time Integration):
%\begin{equation*}\label{eq:LSPG_ROM_IE}
%\vcoarsevec^T\bigg(\mathbf{I} - \Delta t  \mathbf{J}^T[\ucoarsevec^n] \bigg)  \bigg( %\frac{\ucoarsevec^n - \ucoarsevec^{n-1}}{\Delta t} - \mathbf{R}(\ucoarsevec^n) \bigg) = 0
%\end{equation*}
\begin{equation*}
\ufullvec^n = \underset{\mathbf{y} \in \text{Range}(\vcoarsevec)  }{\text{arg min}}\norm{ \frac{\mathbf{y} - \ucoarsevec^{n-1}}{\Delta t} - \mathbf{R}(\mathbf{y}) }^2, \qquad \text{for } n = 1,2,\hdots , \text{ceil}\big(\frac{1}{\Delta t}\big).
\end{equation*}
\textit{Remark: The LSPG approach is strictly coupled to the time integration scheme and time-step.}

%\item Least-Squares Petrov--Galerkin ROM (BDF2 Time Integration):
%\begin{equation*}\label{eq:LSPG_ROM_BDF2}
%\vcoarsevec^T\bigg(\mathbf{I} - \frac{2}{3}\Delta t  \mathbf{J}^T[\ucoarsevec^n] \bigg)  %\bigg( \frac{3\ucoarsevec^n - 4\ucoarsevec^{n-1} + \ucoarsevec^{n-2}}{2 \Delta t} - %\mathbf{R}(\ucoarsevec^n) \bigg) = 0
%\end{equation*}

\end{enumerate}

\subsubsection{Numerical Results}
The first case considered uses $50$ basis vectors each for the conserved variables $\rho, \rho u,$ and $\rho E$. The total dimension of the reduced model is thus $K = 150$. Roughly 99.9--99.99\% of the POD energy is captured by this 150-mode basis. In fact, 99\% of the energy is contained in the first 5-12 modes of each conserved variable. %This implies that ROMs containing more than 30 basis vectors would adequately reconstruct the solution, or at least be stable. This conjecture will be shown to be untrue in general.

The Adjoint Petrov--Galerkin ROM requires specification of the memory length $\tau$. Similarly, LSPG requires the selection of an appropriate time-step. The sensitivity of both methods to this selection will be discussed later in this section. %We initially choose an appropriate time-step and $\tau$ through user selection.
The simulation parameters are provided in Table~\ref{tab:sod_tab1}.

Density profiles at $t = 0.25$ and $t = 1.0$ for explicit Galerkin and APG ROMs, along with an implicit LSPG ROM, are displayed in Fig.~\ref{fig:sod_density}. All three ROMs are capable of reproducing the shock tube density profile in Fig.~\ref{fig:sod_density_0p25}; a normal shock propagates to the right and is followed closely behind by a contact discontinuity, while an expansion wave propagates to the left. All three methods exhibit oscillations at $x = 0.5$, the location of the imaginary burst diaphragm, and near the shock at $x = 0.95$. At $t = 1.0$, when the shock has reflected from the right wall and interacted with the contact discontinuity, much stronger oscillations are present, particularly near the reflected shock at $x = 0.45$. These oscillations are reminiscent of Gibbs phenomenon, and are an indicator of the inability to accurately reconstruct sharp gradients. The Galerkin ROM exhibits the largest oscillations of the ROMs considered, while LSPG exhibits the smallest.

\begin{table}
\centering
\begin{tabular}{ l  l  l  l  l}\hline
  ROM Type & Time Scheme & $\Delta t$ & $\tau$ & $\int ||e||_2 dt$\\ \hline
  Galerkin  & SSP-RK3    & 0.0005 & N/A     & 1.5752 \\
  Galerkin  & Imp. Euler & 0.0005 & N/A     & 1.0344 \\
  APG       & SSP-RK3    & 0.0005 & 0.00043 & 1.0637 \\
  APG       & Imp. Euler & 0.0005 & 0.00043 & 0.8057 \\
  APG       & Imp. Euler & 0.001  & 0.00043 & 0.7983 \\
  LSPG      & Imp. Euler & 0.0005 & N/A     & 1.1668 \\
  LSPG      & Imp. Euler & 0.001  & N/A     & 1.4917 \\ \hline
\end{tabular}
\caption{Computational details for Sod shock tube ROM cases, $K = 150$}
\label{tab:sod_tab1}
\end{table}

\begin{comment}
\begin{figure}
    \centering
    \begin{minipage}{0.49\linewidth}
    \includegraphics[trim={0cm 0cm 0cm 0cm},clip,width=1.\linewidth]{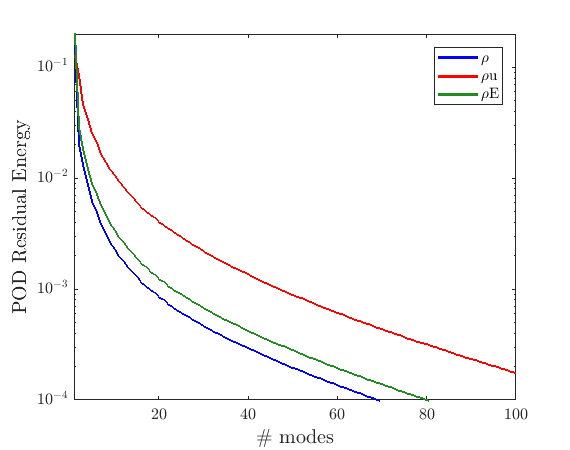}
    \caption{Sod shock tube POD energy spectrum}
    \label{fig:pod_spectrum}
    \end{minipage}\hfill
    \begin{minipage}{0.49\linewidth}
    \includegraphics[trim={0cm 0cm 0cm 0cm},clip,width=1.\linewidth]{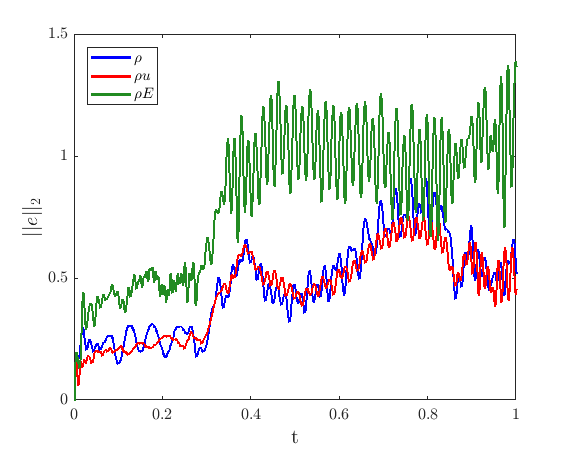}
    \caption{Conserved variable error profiles, APG w/ SSP-RK3 time integration, $K = 150$, $\Delta t = 0.0005$}
    \label{fig:sod_error_consVars}
    \end{minipage}\hfill
\end{figure}
\end{comment}

\begin{figure}
\begin{center}
\begin{subfigure}[t]{0.49\textwidth}
\includegraphics[trim={0cm 0cm 0cm 0cm},clip,width=1.\linewidth]{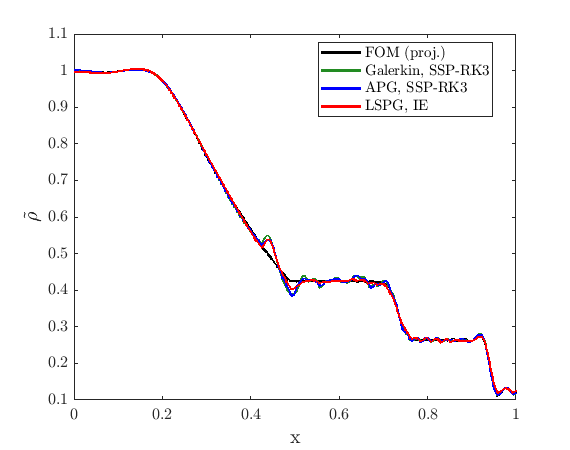}
\caption{$t=0.25$.}
\label{fig:sod_density_0p25}
\end{subfigure}
\begin{subfigure}[t]{0.49\textwidth}
\includegraphics[trim={0cm 0cm 0cm 0cm},clip,width=1.\linewidth]{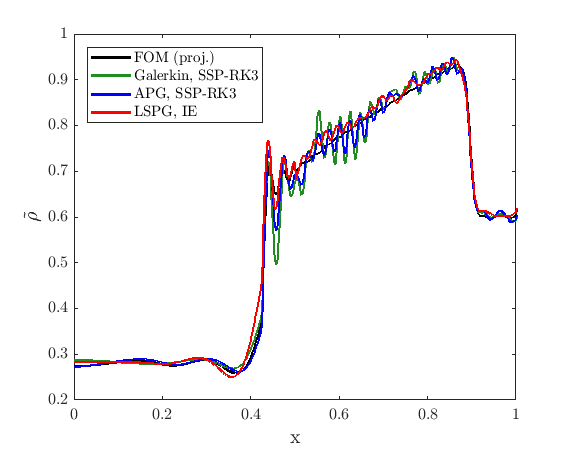}
\caption{$t=1$.}
\label{fig:sod_density_1p0}
\end{subfigure}
\end{center}
\caption{Density profiles for the Sod shock tube with $K = 150$, $\Delta t = 0.0005$.}
\label{fig:sod_density}
\end{figure}

Figure~\ref{fig:sod_error} shows the evolution of the error for all of the ROMs listed in Table~\ref{tab:sod_tab1}. The $L^2$-norm of the error is computed as,
\begin{equation*}
||e||_2 = \sqrt{ \sum_{i=1}^{1000} \Big[(\tilde{\rho}_{i,ROM} - \tilde{\rho}_{i,FOM} )^2 + (\widetilde{\rho u}_{i,ROM} - \widetilde{\rho u}_{i,FOM} )^2  + (\widetilde{\rho E}_{i,ROM} - \widetilde{\rho E}_{i,FOM})^2 \Big] }.
\end{equation*}
 Here, the subscript $i$ denotes each finite volume cell. The FOM values used for error calculations are projections of the FOM data onto $\Vcoarse$, e.g. $\tilde{\rho}_{FOM} = \Picoarse \rho_{FOM}$. This error measure provides a fair upper bound on the accuracy of the ROMs, as the quality of the ROM is generally dictated by the richness of the trial basis and the projection of the FOM data is the maximum accuracy that can be reasonably hoped for.

In Figure~\ref{fig:sod_error}, it is seen that the APG ROM exhibits improved accuracy over the Galerkin ROM. The LSPG ROM for $\Delta t = 0.0005$ performs slightly better than the explicit Galerkin ROM, and worse than the implicit Galerkin ROM. Increasing the time-step to $\Delta t = 0.001$ results in a significant increase in error for the LSPG ROM. This is due to the fact that the performance of LSPG is influenced by the time-step. For a trial basis containing much of the residual POD energy, LSPG will generally require a very small time-step to improve accuracy; this sensitivity will be explored later. Lastly, it is observed that the APG ROM is \textit{not} significantly affected by the time-step. The APG ROM with $\Delta t = 0.001$ shows moderately increased error prior to $t = 0.3$ and similar error afterwards when compared against the $\Delta t = 0.0005$ APG ROM case. 

\begin{figure}
\begin{center}
\begin{subfigure}[t]{0.49\textwidth}
\includegraphics[trim={0cm 0cm 0cm 0cm},clip,width=1.\linewidth]{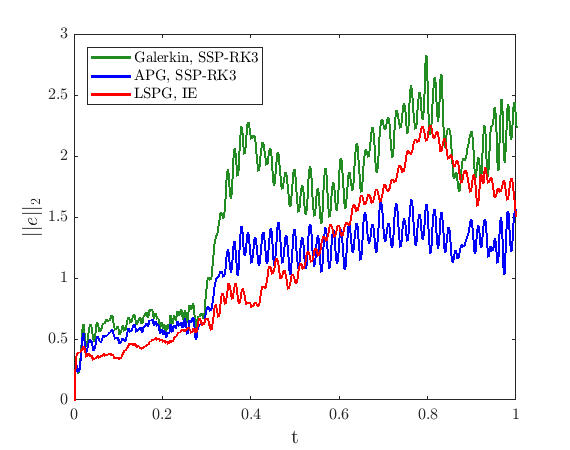}
\caption{Explicit Galerkin/APG, implicit LSPG, $\Delta t = 0.0005$}
\label{fig:sod_err_explicit}
\end{subfigure}
\begin{subfigure}[t]{0.49\textwidth}
\includegraphics[trim={0cm 0cm 0cm 0cm},clip,width=1.\linewidth]{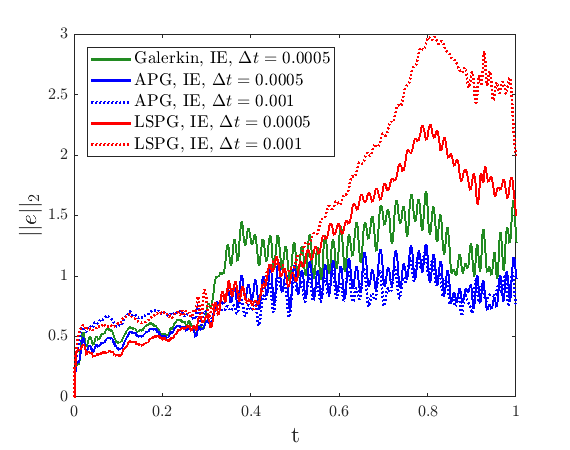}
\caption{All implicit, various $\Delta t$}
\label{fig:sod_err_implicit}
\end{subfigure}
\end{center}
\caption{$L^2$-norm error profiles for the Sod shock tube with $150$ basis vectors.}
\label{fig:sod_error}
\end{figure}

Figure~\ref{fig:sod_mode_study} studies the effect of the number of modes retained in the trial basis on the stability and accuracy, over the range $K = 60,75,90,\ldots,180$. Missing data points indicate an unstable solution. Values of $\tau$ for the APG ROMs are again selected by user choice. The most striking feature of these plots is the fact that even though the explicit Galerkin ROM is unstable for $K \leq 135$ and the implicit Galerkin ROM is unstable for $K \leq 75$, the APG and LSPG ROMs are stable for all cases. Furthermore, the APG and LSPG ROMs are capable of achieving stability with a time-step twice as large as that of the Galerkin ROM. The cost of the APG and LSPG ROMs are effectively halved, but they are still able to stabilize the simulation. Interestingly, the Galerkin and APG ROMs both exhibit abrupt peaks in error at $K = 120$, while the LSPG ROMs do not. The exact cause of this is unknown, but displays that a monotonic decrease in error with enrichment of the trial space is not guaranteed. 

Several interesting comparisons between APG and LSPG arise from Figure~\ref{fig:sod_mode_study}. First, with the exception of the $K = 120$ case, Fig.~\ref{fig:sod_err_explicit} shows that the APG ROM with explicit time integration exhibits accuracy comparable to that of the LSPG ROM with implicit time integration. As can be seen in comparing Tables~\ref{tab:alg_apg_exp} and~\ref{tab:alg_LSPG}, the cost of APG with explicit time integration is significantly lower than the cost of LSPG. This is an attractive feature of APG, as it is able to use inexpensive explicit time integration while LSPG is restricted to implicit methods. Additionally, we draw attention to the poor performance of LSPG at high $K$ for a moderate time-step in Fig.~\ref{fig:sod_modeSens_implicit}. Increasing the time-step to $\Delta t = 0.001$ to decrease simulation cost only exacerbates this issue; as the trial space is enriched, LSPG requires a smaller time-step to yield accurate results. If we wish to improve the LSPG solution for $K = 150$, we must decrease the time-step below that of the FOM. The accuracy of the APG ROM does not change when the time-step is doubled from $\Delta t = 0.0005$ to $\Delta t = 0.001$. This halves the cost of the APG ROM with no significant drawbacks.  

\begin{figure}
\begin{center}
\begin{subfigure}[t]{0.49\textwidth}
\includegraphics[trim={0cm 0cm 0cm 0cm},clip,width=1.\linewidth]{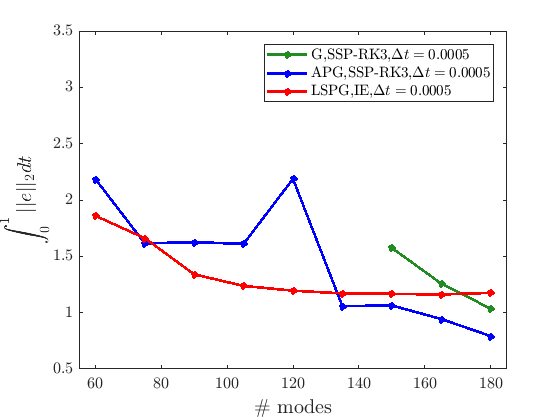}
\caption{Explicit Galerkin/APG, implicit LSPG, fixed $\Delta t$}
\label{fig:sod_modeSens_explicit}
\end{subfigure}
\begin{subfigure}[t]{0.49\textwidth}
\includegraphics[trim={0cm 0cm 0cm 0cm},clip,width=1.\linewidth]{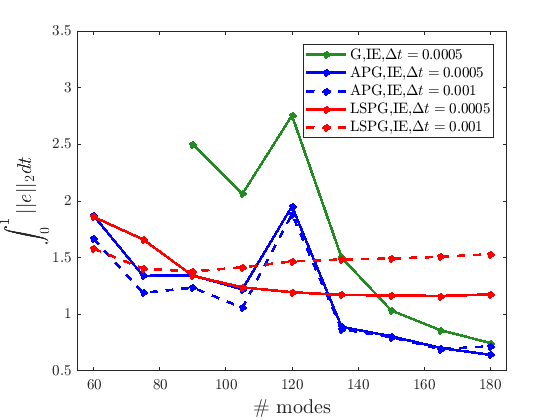}
\caption{All implicit, various $\Delta t$}
\label{fig:sod_modeSens_implicit}
\end{subfigure}
\end{center}
\caption{Integrated error mode sensitivity study for the Sod shock tube.}
\label{fig:sod_mode_study}
\end{figure}

\subsubsection{Optimal Memory Length Investigations}
As mentioned previously, the success of LSPG is tied to the physical time-step and the time integration scheme, while the parameter $\tau$ in the APG method may be chosen independently from these factors. In minimizing ROM error, finding an optimal value of $\tau$ for the APG ROM may permit the choice of a much larger time-step than the optimal LSPG time-step. Further, the APG method may be applied with explicit time integration schemes, which are generally much less expensive than the implicit methods which LSPG is restricted to. To demonstrate this, the APG ROM and LSPG ROM with $K=150$ are simulated for a variety of time scales ($\tau$ for APG and $\Delta t$ for LSPG). 

Figure~\ref{fig:sod_memlength_sensitivity} shows the integrated error of the ROMs versus the relevant time scale. For this case, the optimal value of $\Delta t$ for LSPG is less than $0.0001$ and is not shown. The optimal value of $\tau$ is not greatly affected by the choice of time integration scheme (implicit or explicit) or time-step. Furthermore, because $\tau$ can be chosen independently from $\Delta t$ for APG, the APG ROM can produce low error at a much larger time-step ($\Delta t = 0.001$) than the optimal time-step for the LSPG ROM. This highlights the fact that the ``optimal" LSPG ROM may be computationally expensive due to a small time-step, whereas the ``optimal" APG ROM requires only the specification of $\tau$ and can use, potentially, much larger time-steps than LSPG. It has to be mentioned, however, that the choice of $\tau$  has an impact on performance --- selections of $\tau$ larger than those plotted in Fig.~\ref{fig:sod_memlength_sensitivity} caused the ROM to lose stability. 

Also discussed previously, computing an optimal value of $\tau$ \textit{a priori} may be linked to the spectral radius of the coarse-scale Jacobian, (i.e. $\rho \big( \vcoarsevec^T \mathbf{J}[\ucoarsevec_0]\vcoarsevec \big)$, not to be confused with the physical density $\rho$). We consider the APG ROM for basis sizes of  $K=30,60,90,150,180,240$. For each case, an ``optimal" $\tau$ is found by minimizing the misfit between the ROM solution and the projected FOM solution. The misfit is defined as follows,
\begin{equation}\label{eq:misfit}
\mathcal{J}(\tau) = \sum_{i=1}^{200} ||e(\tau,t = i10\Delta t)||_2.
\end{equation}
Equation~\ref{eq:misfit} corresponds to summing the $L^2$-norm of the error at every $10^{th}$ time-step. Figure~\ref{fig:sod_tau_specRad} shows the resulting optimal $\tau$ for each case plotted against the inverse of the spectral radius of the coarse-scale Jacobian evaluated at $t=0$. 
%A linear relationship between $\tau$ and the inverse of the spectral radius is observed. A linear fit gives the relationship,
%\begin{equation}\label{eq:taufit}
%\tau = 0.466\frac{1}{\rho} - 0.0013.
%\end{equation}
%The Pearson correlation coefficient is $0.993$ with a $p$ value of $6.5 \times 10^{-5}$. 
The strong linear correlation suggests that a near-optimal value of $\tau$ may be chosen by evaluating the spectral radius, $\rho \big( \vcoarsevec^T \mathbf{J}[\ucoarsevec_0]\vcoarsevec \big)$ and using the above linear relationship to $\tau$. 

Two points are emphasized here:
\begin{enumerate}
\item The spectral radius plays an important role in both implicit and explicit time integrators and is often the determining factor in the choice of the time-step. Theoretical analysis on the stability of explicit methods (and convergence of implicit methods) shows a similar dependence to the spectral radius. Choosing the memory length to be $\tau = \Delta t$ is one simple heuristic that may be used.
\item %The heuristic presented in Eq.~\ref{eq:taufit} is intended to provide an additional guide in the selection of $\tau$. 
While a linear relationship between $\tau$ and the spectral radius of the coarse-scale Jacobian has been observed in every problem the authors have examined, the slope of the fit is somewhat problem dependent. For the purpose of reduced-order modeling, however, this is only a minor inconvenience as an appropriate 
value of $\tau$ can be selected by assessing the performance of the ROM on the training set, i.e., on the simulation used to construct the POD basis. 
\item Finally, more complex methods may be used to compute $\tau$. A method to dynamically compute $\tau$ based on Germano's identity, for instance, was proposed in~\cite{parish_dtau} in the context of the simulation of turbulent flows with Fourier-Galerkin methods. Extension of this technique to projection-based ROMs and the development of additional techniques to select $\tau$ will be the subject of future work.
\end{enumerate}

\begin{figure}
    \centering
    \begin{minipage}{0.49\linewidth}
    \includegraphics[trim={0cm 0cm 0cm 0cm},clip,width=1.\linewidth]{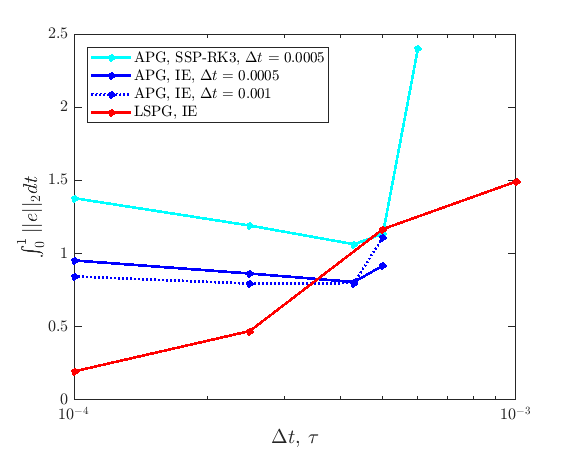}
    \caption{Error as a function of time scale, $K = 150$.}
    \label{fig:sod_memlength_sensitivity}
    \end{minipage}\hfill
    \begin{minipage}{0.49\linewidth}
    \includegraphics[trim={0cm 0cm 0cm 0cm},clip,width=1.\linewidth]{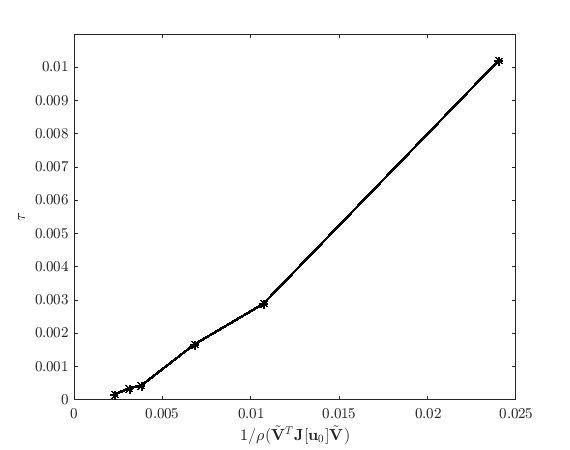}
    \caption{Optimal $\tau$ as a function of the spectral radius evaluated at $t=0$.}
    \label{fig:sod_tau_specRad}
    \end{minipage}\hfill
\end{figure}

\subsection{Example 2: Flow Over Cylinder}\label{sec:cylinder}
The second case considered is viscous compressible flow over a circular cylinder. The flow is described by the two-dimensional compressible Navier-Stokes equations. A Newtonian fluid and a calorically perfect gas are assumed.
\subsubsection{Full-Order Model}\label{sec:cylinder_fom}
The compressible Navier-Stokes equations are solved using a discontinuous Galerkin (DG) method and explicit time integration. Spatial discretization with the discontinuous Galerkin method leads to a semi-discrete system of the form,
\begin{equation*}
 \frac{d \ufullvec}{dt} = \mathbf{M}^{-1}\mathbf{f}(\ufullvec), \qquad \ufullvec(t=0) = \ufullvec_0,
\end{equation*}
where $\mathbf{M}\in \mathbb{R}^{N \times N}$ is a block diagonal mass matrix and $\mathbf{f}(\ufullvec) \in \mathbb{R}^N$ is a vector containing surface and volume integrals. Thus, using the notation defined in Eq.~\ref{eq:FOM}, the right-hand side operator for the DG discretization is defined as,
$$\mathbf{R}(\ufullvec) = \mathbf{M}^{-1}\mathbf{f}(\ucoarsevec).$$

For the flow over cylinder problem considered in this section, a single block domain is constructed in polar coordinates by uniformly discretizing in $\theta$ and by discretizing in the radial direction by,
$$r_{i+1} = r_i + r_i (R_g - 1),$$
where $R_g$ is a stretching factor and is defined by,
$$R_g = r_{max}^{1/N_r}.$$

The DG method utilizes the Roe flux at the cell interfaces and uses the first form of Bassi and Rebay~\cite{BR1} for the viscous fluxes. Temporal integration is again performed using a strong stability preserving RK3 method. Far-field boundary conditions and linear elements are used. Details of the FOM are presented in Table~\ref{tab:cylinder}.

\begin{table}
\centering
\begin{tabular}{|c | c | c | c | c | c | c | c | c | c |}\hline
 $r_{max}$ & $N_r$ & $N_{\theta}$ & $p_r$ & $p_{\theta}$ & $\Delta t$  & Mach & $a_{\infty}$ & $p_{\infty}$ & $T_{\infty}$ \\ \hline
 60 & 80 & 80 &  3 & 3  & $5e-3$ & 0.2 & $1.0$ & $1.0$ & $\gamma^{-1}$ \\\hline
\end{tabular}
\caption{Details used for flow over cylinder problem. In the above table, $N_r$ and $N_{\theta}$ are the number of cells in the radial and $\theta$ direction, respectively. Similarly, $p_r$ and $p_{\theta}$ are the polynomial orders in the radial and $\theta$ direction. Lastly, $a_{\infty}$, $p_{\infty}$, and $T_{\infty}$ are the free-stream speed of sound, pressure, and temperature.}
\label{tab:cylinder}
\end{table}

%\subsubsection{Construction of the ROM basis}

\subsubsection{Solution of the Full-Order Model and Construction of the ROM Trial Space}\label{sec:cyl_romsteps}
Flow over a cylinder at Re=$100,200,$ and $300$, where Re=$ \rho_{\infty} U_{\infty} D / \mu$ is the Reynolds number, are considered. These Reynolds numbers give rise to the well studied von K\'arm\'an vortex street. Figure~\ref{fig:vonkarman} shows the FOM solution at Re=100 for several time instances to illustrate the vortex street. %Details on the full-order model setup, flow conditions, and mesh were provided in Section~\ref{sec:cylinder_fom}.

The FOM is used to construct the trial spaces used in the ROM simulations. The process used to construct these trial spaces is as follows:
\begin{enumerate}
    \item Initialize FOM simulations at Reynold's numbers of Re=$100,200,$ and $300.$ The Reynold's number is controlled by raising or lowering the viscosity.
    \item Time-integrate the FOM at each Reynolds number until a statistically steady-state is reached.
    \item Once the flow has statistically converged to a steady state, reset the time coordinate to be $t=0$, and solve the FOM for $t \in [0,100]$.
    \item Take snapshots of the FOM solution obtained from Step 3 at every $t=0.5$ time units over a time-window of $t \in [0,100]$ time units, for a total of $200$ snapshots at each Reynolds number. This time window corresponds to roughly two cycles of the vortex street, with $100$ snapshots per cycle.
    \item  Assemble the snapshots from each case into one global snapshot matrix of dimension $N \times 600$. This snapshot matrix is used to construct the trial subspace through POD. Note that only one set of basis functions for all conserved variables is constructed. 
    \item Construct trial spaces of dimension $N\times 11$, $N\times 43$, and $N \times 87$. These subspace dimensions correspond to an energy criterion of $99, 99.9,$ and $99.99\%$. The different trial spaces are summarized in Table~\ref{tab:rom_basis_1}.
\end{enumerate}
\begin{figure}
\begin{center}
\begin{subfigure}[t]{0.05\textwidth}
\includegraphics[trim={0cm 0cm 0cm 0cm},clip,width=1.\linewidth]{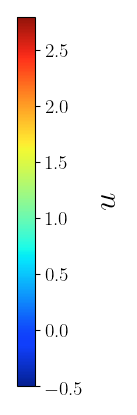}
\label{fig:vonkarman0}
\end{subfigure}
\begin{subfigure}[t]{0.3\textwidth}
\includegraphics[trim={0cm 0cm 0cm 0cm},clip,width=1.\linewidth]{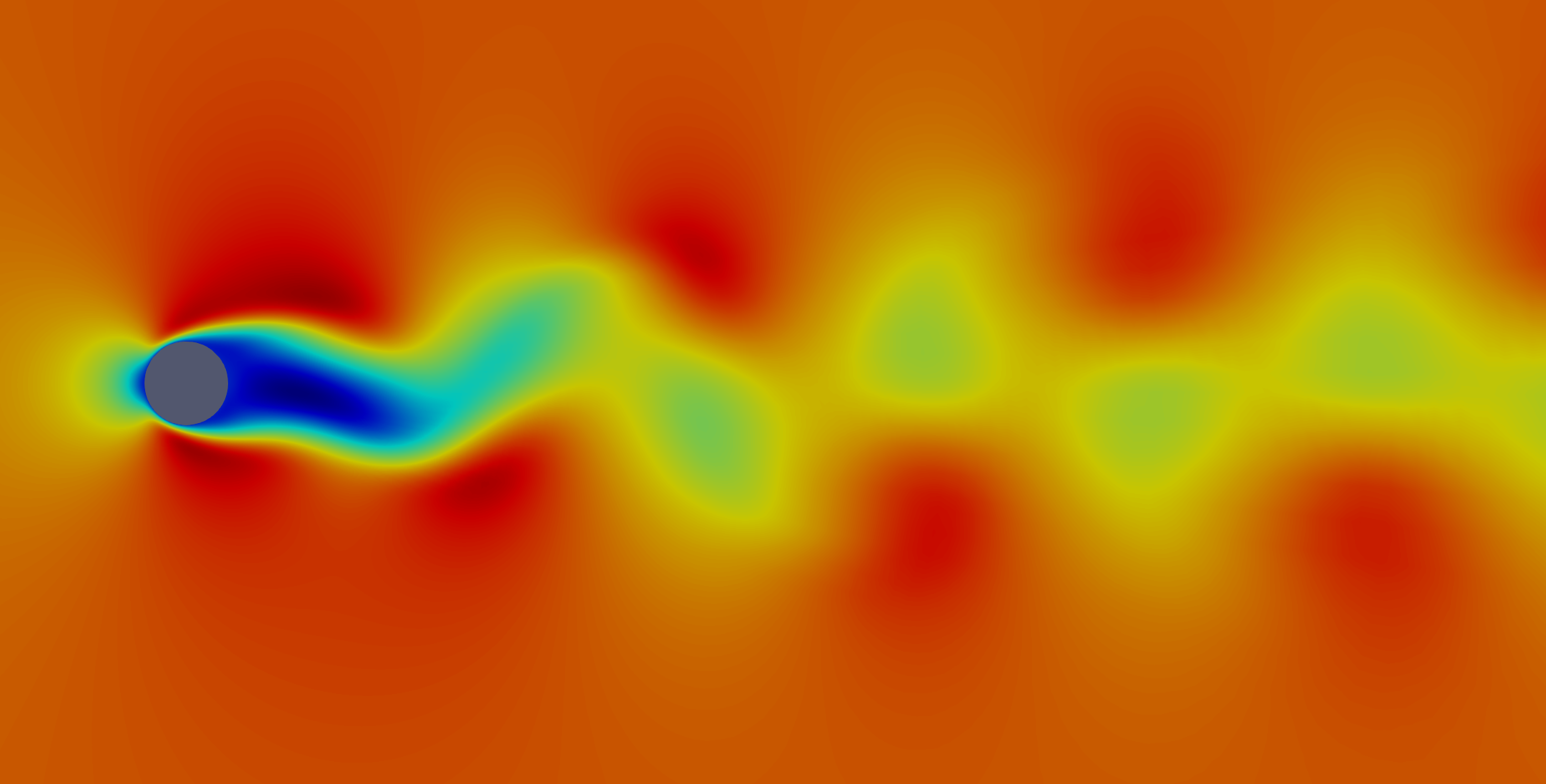}
\caption{$x$-velocity at $t=0.0$}
\label{fig:vonkarman1}
\end{subfigure}
\begin{subfigure}[t]{0.3\textwidth}
\includegraphics[trim={0cm 0cm 0cm 0cm},clip,width=1.\linewidth]{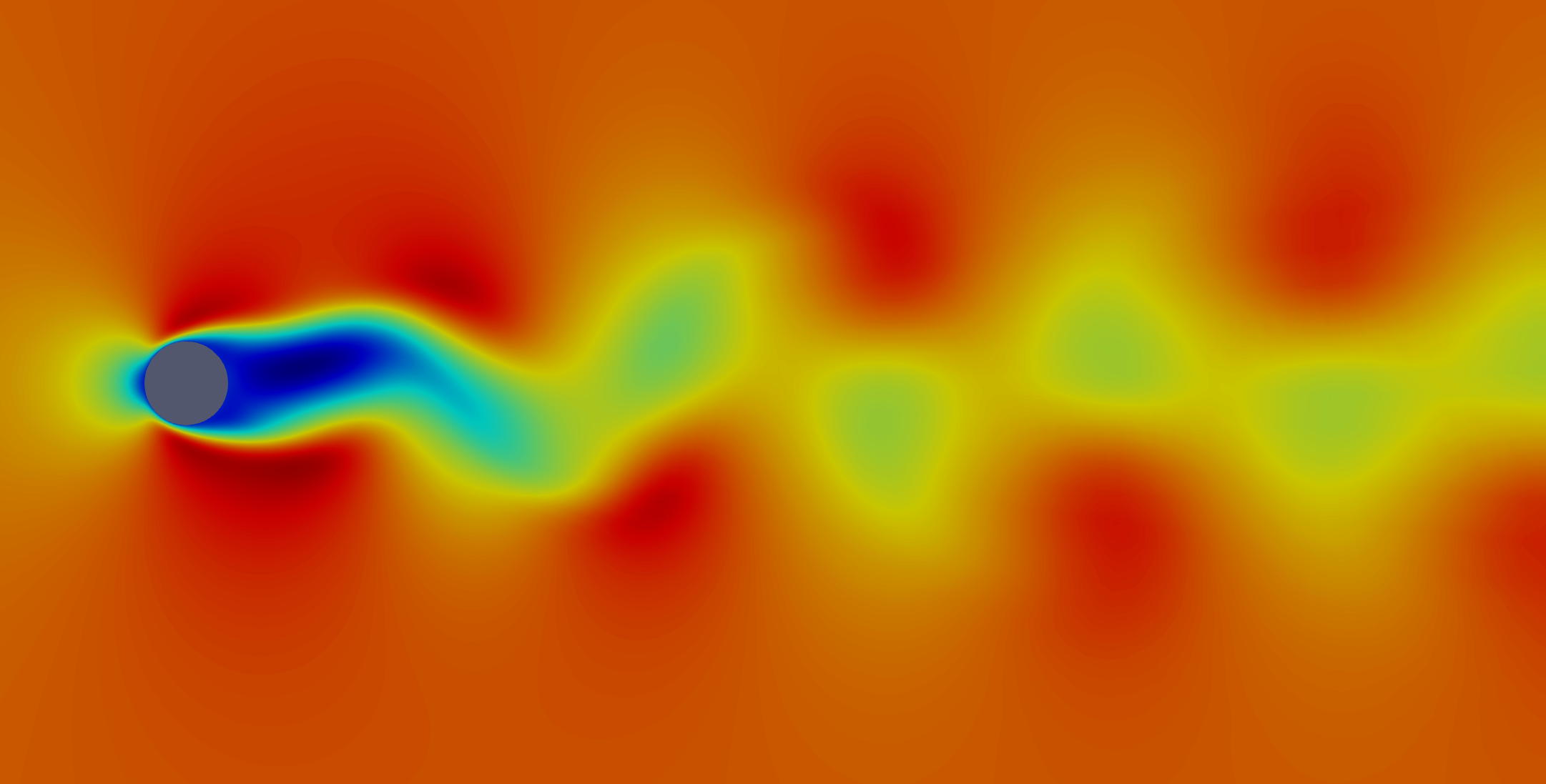}
\caption{$x$-velocity at $t=25.0$}
\end{subfigure}
\begin{subfigure}[t]{0.3\textwidth}
\includegraphics[trim={0cm 0cm 0cm 0cm},clip,width=1.\linewidth]{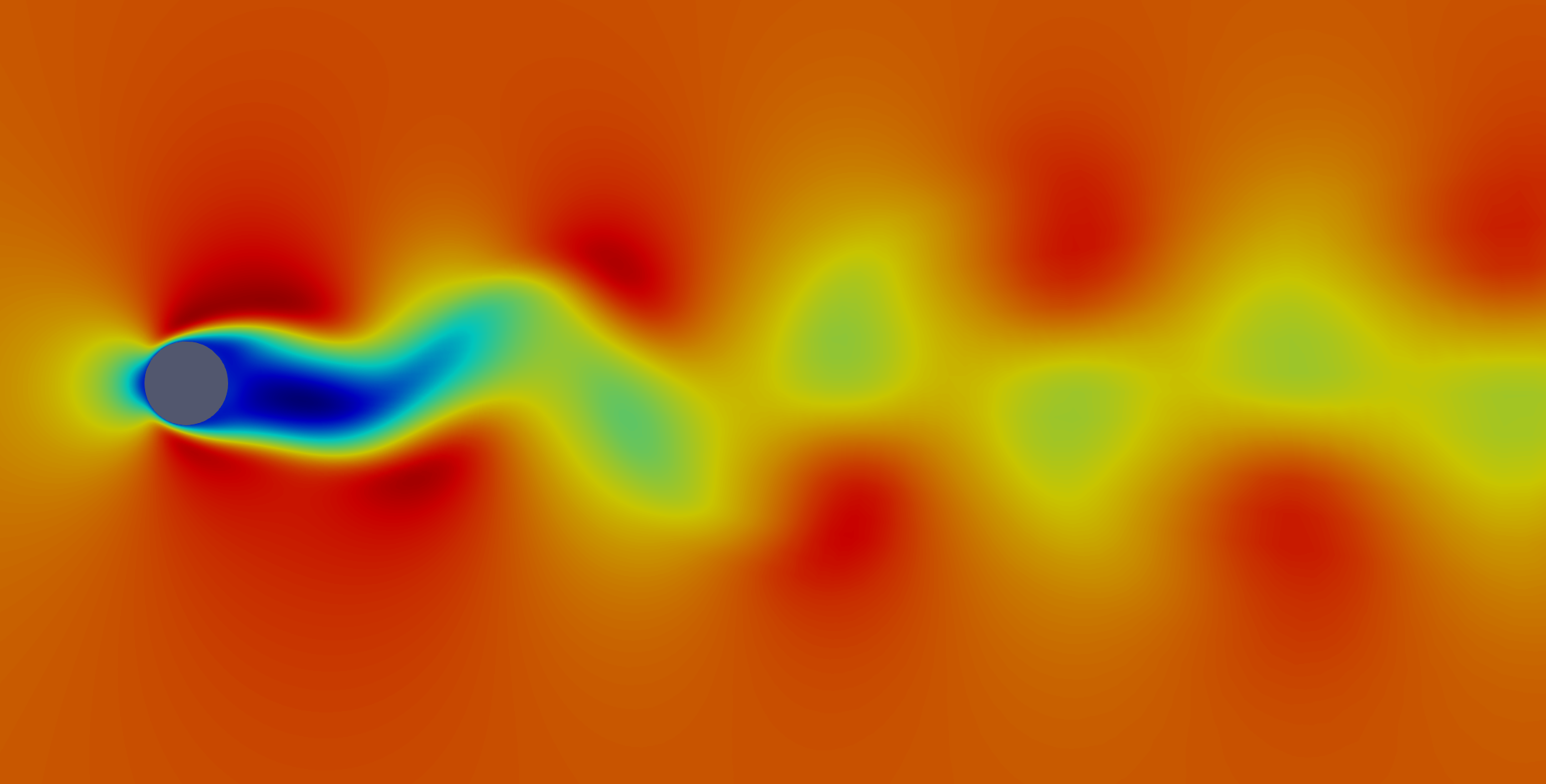}
\caption{$x$-velocity at $t=50.0$}
\label{fig:vonkarman3}
\end{subfigure}
\end{center}
\caption{Evolution of the von K\'arm\'an vortex street at Re=100.}
\label{fig:vonkarman}
\end{figure}

\subsubsection{Solution of the Reduced-Order Models}
The G ROM, APG ROM, and LSPG ROMs are considered. Details on their implementation are as follows:
\begin{enumerate}
\item Galerkin ROM: The Galerkin ROM is evolved in time using both explicit and implicit time integrators. In the explicit case, a strong stability RK3 method is used. In the implicit case, Crank-Nicolson time integration is used. The non-linear algebraic system is solved using SciPy's Jacobian-Free Netwon-Krylov solver. LGMRES is employed as the linear solver. The convergence tolerance for the max-norm of the residual is set at the default ftol=6e-6. 

\item Adjoint Petrov-Galerkin ROM: The APG ROM is evolved in time using the same time integrators as the Galerkin ROM. The extra term appearing in the APG model is computed via finite difference\footnote{It is noted that, when used in conjunction with a JFNK solver that utilizes finite difference to approximate the action of the Jacobian on a vector, approximating the extra RHS term in APG via finite difference leads to computing the finite difference approximation of a finite difference approximation. While not observed in the examples presented here, this can have a detrimental effect on accuracy and/or convergence.} with a step size of $\epsilon = $1e-5. Unless noted otherwise, the memory length $\tau$ is selected to be $\tau = \frac{0.2}{\rho(\vcoarsevec^T \mathbf{J}[\ucoarsevec(0)] \vcoarsevec ) } .$ The impact of $\tau$ on the numerical results is considered in the subsequent sections. 

\item LSPG ROM: The LSPG ROM is formulated from an implicit Crank-Nicolson temporal discretization. The resulting non-linear least-squares problem is solved using SciPy's least-squares solver with the `dogbox' method~\cite{scipy_leastsquares_dogbox}. The tolerance on the change to the cost function is set at ftol=1e-8. The tolerance on the change to the generalized coordinates is set at xtol=1e-8. The SciPy least-squares solver is comparable in speed to our own least-squares solver that utilizes the Gauss-Newton method with a thin QR factorization to solve the least-squares problem. The SciPy solver, however, was observed to be more robust in driving down the residual than the basic Gauss-Newton method with QR factorization, presumably due to SciPy's inclusion of trust-regions, and hence results are reported with the SciPy solver.
\end{enumerate}
All ROMs are initialized with the solution of the Re=$100$ FOM at time $t=0$, the $x$-velocity of which is shown in Figure~\ref{fig:vonkarman1}.

%For cases that consider hyper-reduction of the right-hand side, an additional snapshot matrix of the right-hand side is generated. This additional snapshot matrix is generated in the same fashion as the trial space snapshot matrix. Hyper-reduction for G ROM and APG ROM is achieved through the Gappy POD method using the sample mesh concept~\cite{carlber_samplemesh}. Hyper-reduction for LSPG is achieved through collocation using the same sample mesh.\footnote{It is noted that collocated LSPG out-performed the GNAT method for this example, and thus GNAT is not considered.} The sample points are selected through a QR factorization of the right-hand side snapshot matrix~\cite{qdeim}. These sample points are then augmented such that they contain every conserved variable and quadrature point at the selected cells. Details on hyper-reduction and its implementation in our discontinuous Galerkin code are provided in Appendix~\ref{appendix:QDEIM}.  

\begin{table}[]
\begin{center}
\begin{tabular}{c c c c}
\hline
Basis \# & Trial Basis Dimension ($K$) &  Energy Criteria  & $\tau$ (Adjoint Petrov-Galerkin) \\
\hline
1    & $ 11$ & $99\%$  & $1.0$ \\
2    & $ 42$ & $99.9\%$  & $0.3$   \\
3    & $ 86$ & $99.99\%$  & $0.1$  \\
\hline
\end{tabular}
\caption{Summary of the various basis dimensions used for Example 2}
\label{tab:rom_basis_1}
\end{center}
\end{table}

\subsubsection{Reconstruction of Re=100 Case}\label{sec:re100_rec}
Reduced-order models of the Re=100 case are first considered. This case was explicitly used in the construction of the POD basis and tests the ability of the ROM to reconstruct previously ``seen" dynamics. Unless otherwise noted, the default time-step for all ROMs is taken to be $\Delta t = 0.5$. The values of $\tau$ used in the APG ROMs are selected from the spectral radius heuristic and are given in Table~\ref{tab:rom_basis_1}. Figures~\ref{subfig:re100a} and~\ref{subfig:re100b} show the lift coefficient as well as the mean squared error (MSE) of the full-field ROM solutions for the G ROM, APG ROM, and LSPG ROMs for Basis \#2, while Figure~\ref{subfig:re100c} shows the integrated MSE for $t \in [0,200]$ for Basis \#1, 2, and 3. Figure~\ref{subfig:re100d} shows the integrated error as a function of relative CPU time for the various ROMs. The relative CPU time is defined with respect to the FOM, which is integrated with an explicit time-step 100 times lower than the ROMs. The lift coefficients predicted by all three ROMs are seen to overlay the FOM results. The mean squared error shows that, for a given trial basis dimension, the APG ROM is more accurate than both the Galerkin and LSPG ROMs. This is the case for both explicit and implicit time integrators. As shown in Figure~\ref{subfig:re100c}, the APG ROM converges at a similar rate to the G ROM as the dimension of the trial space grows. For Basis \#2 and \#3, the implicit time-marching schemes are slightly less accurate than the explicit time-marching schemes. Finally, Figure~\ref{subfig:re100d} shows that, for a given CPU time, the G ROM with explicit time-marching produces the least error. The APG ROM with explicit time-marching is the second-best performing method. In the implicit case, both the G and APG ROMs lead to lower error at a given CPU time than LSPG. This decrease in cost is due to the fact that the G and APG ROMs utilize Jacobian-Free Netwon-Krylov solvers. As discussed in Section~\ref{sec:cost}, it is much more challenging for LSPG to utilize Jacobian-Free methods. Due to the increased cost associated with implicit solvers, only explicit time integration is used for the G ROM and APG ROM beyond this point. 
\begin{figure}
\begin{center}
\begin{subfigure}[t]{0.48\textwidth}
\includegraphics[trim={0cm 0cm 0cm 0cm},clip,width=1.\linewidth]{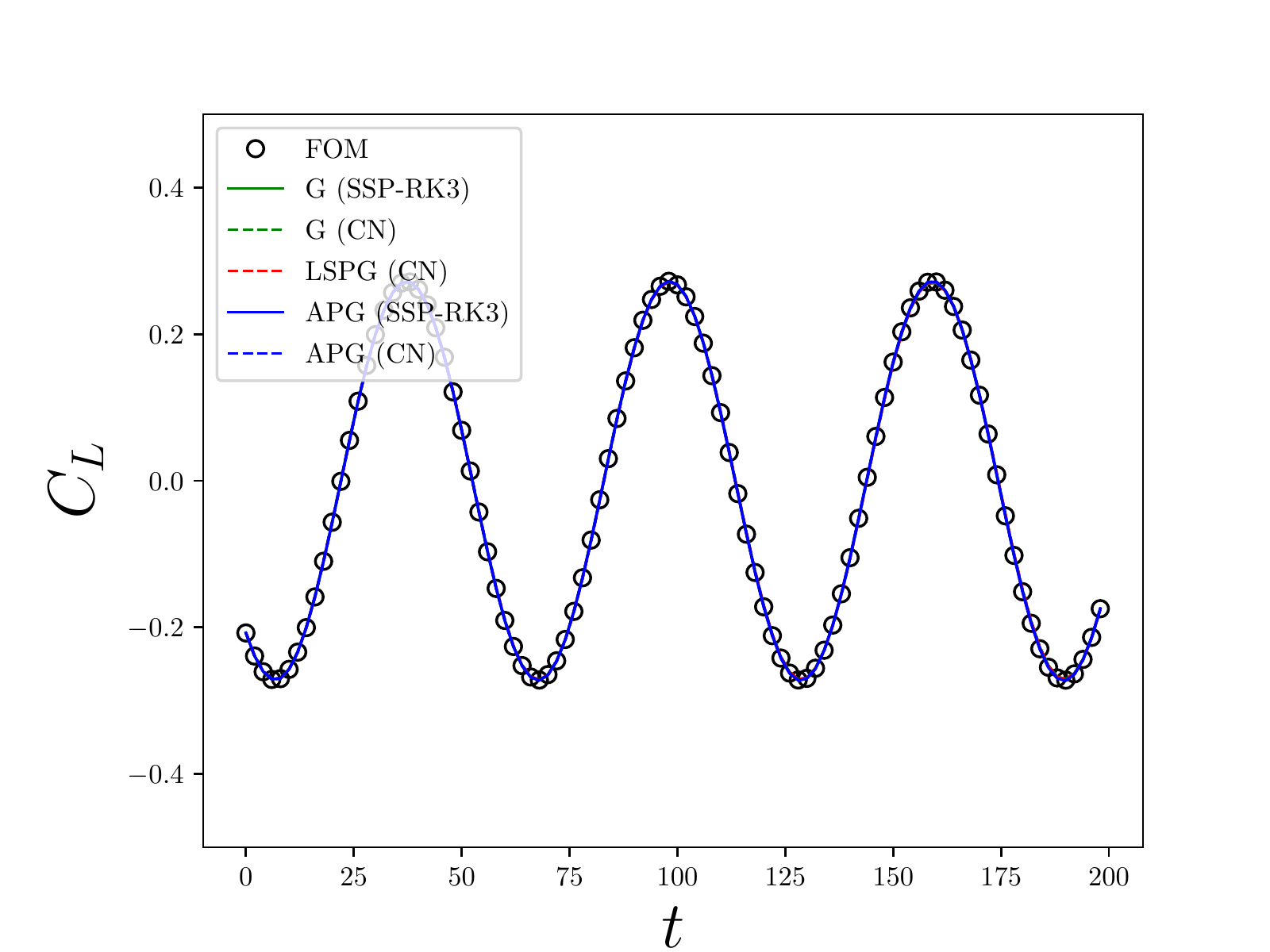}
\caption{Lift Coefficient as a function of time for Basis \# 2.}
\label{subfig:re100a}
\end{subfigure}
\begin{subfigure}[t]{0.48\textwidth}
\includegraphics[trim={0cm 0cm 0cm 0cm},clip,width=1.\linewidth]{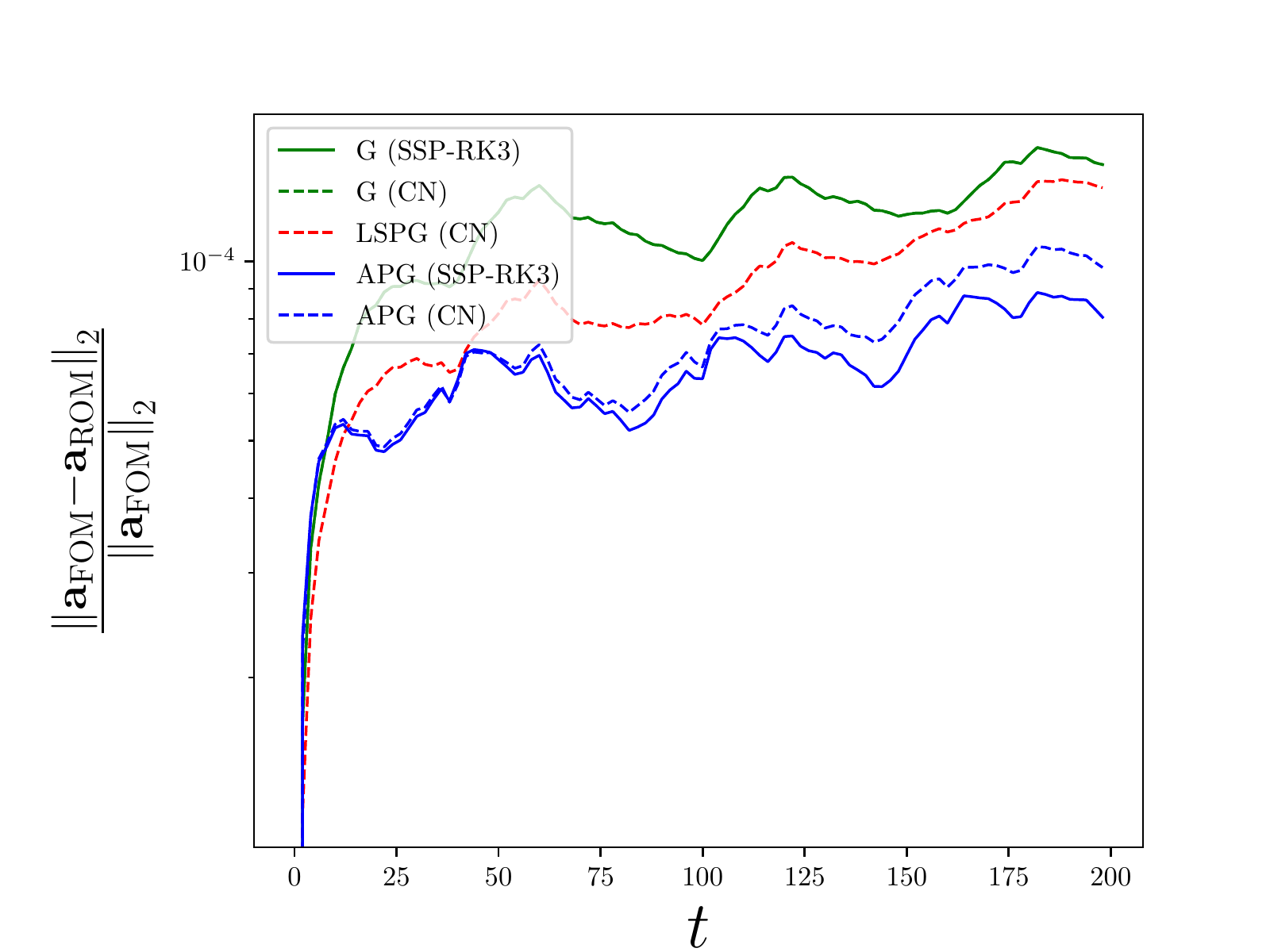}
\caption{Normalized error as a function of time for Basis \# 2.}
\label{subfig:re100b}
\end{subfigure}
\begin{subfigure}[t]{0.48\textwidth}
\includegraphics[trim={0cm 0cm 0cm 0cm},clip,width=1.\linewidth]{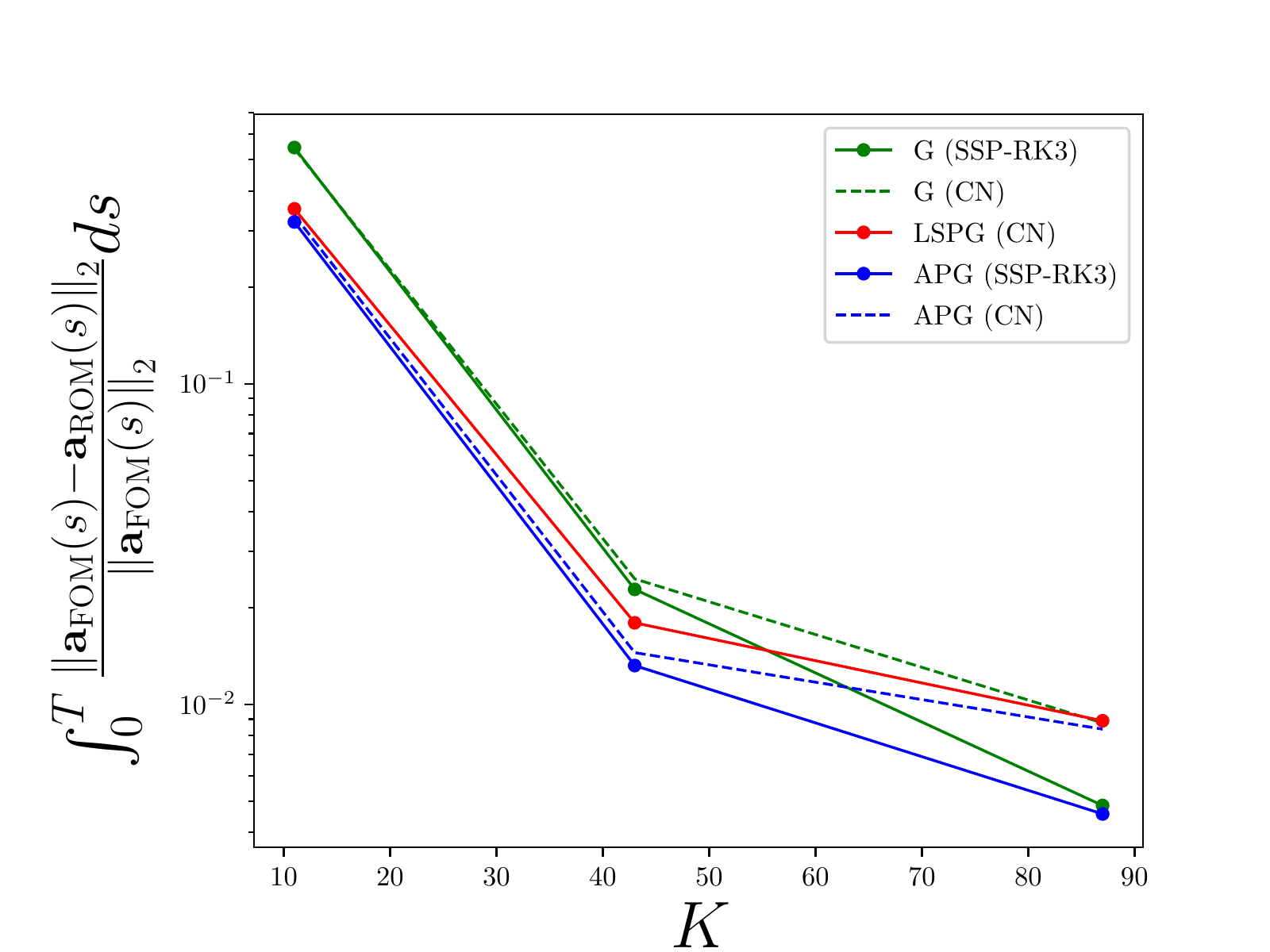}
\caption{Integrated normalized error vs trial subspace dimension.}
\label{subfig:re100c}
\end{subfigure}
\begin{subfigure}[t]{0.48\textwidth}
\includegraphics[trim={0cm 0cm 0cm 0cm},clip,width=1.\linewidth]{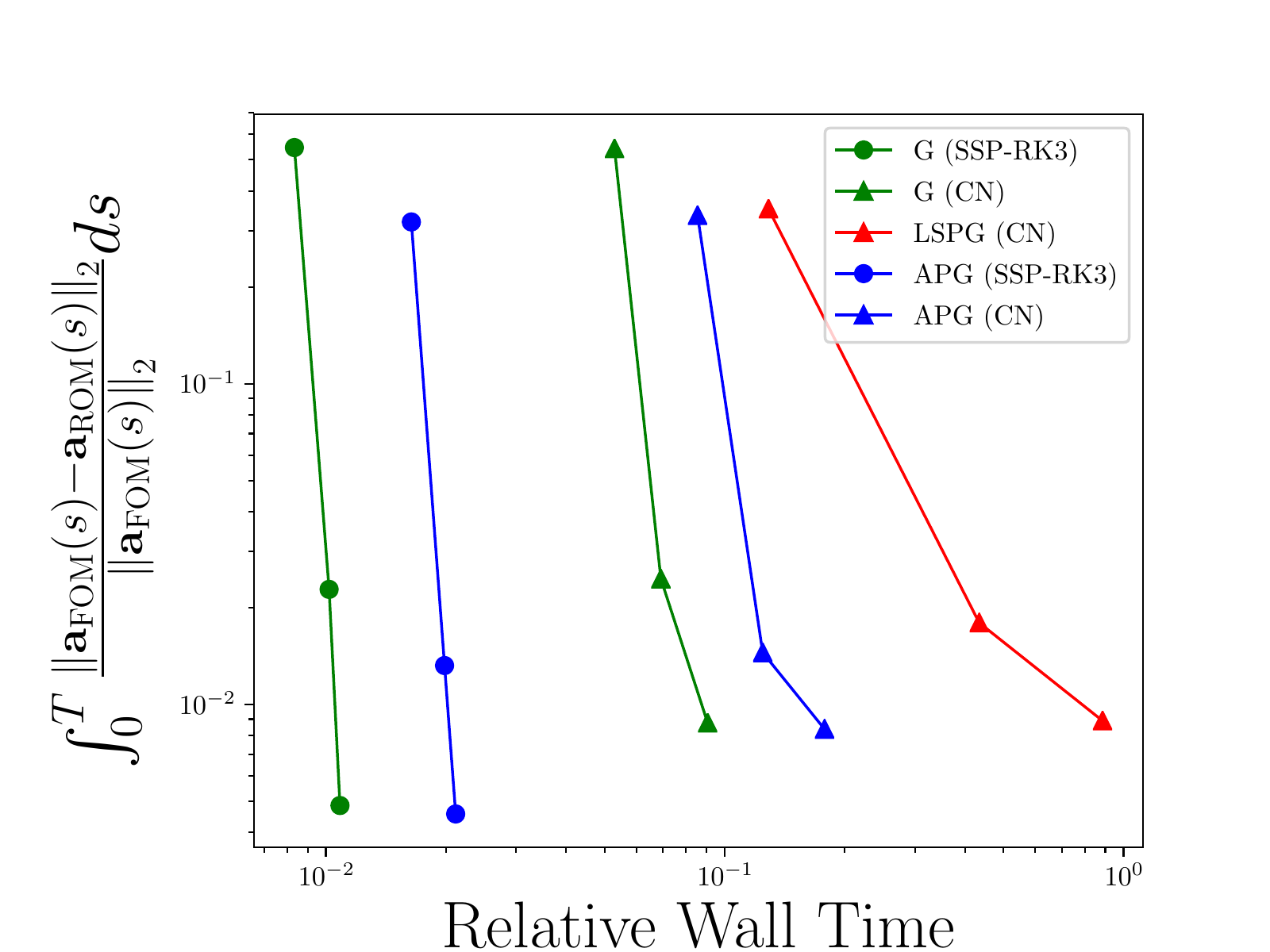}
\caption{Integrated normalized error vs relative CPU time.}
\label{subfig:re100d}
\end{subfigure}
\end{center}
\caption{ROM results for flow over cylinder at Re=100. The lift coefficient is defined as $C_L = \frac{2L}{\rho U_{\infty}^2 D}$, with $L$ being the integrated force on the cylinder perpendicular to the free-stream velocity vector.}
\label{fig:re100}
\end{figure}

Next, we investigate the sensitivity of the different ROMs to the time-step size. Reduced-order models of the Re=$100$ case using Basis \#2 are solved using time-steps of $\Delta t = \big[0.1,0.2,0.5,1]$. Note that the largest time-step considered is $200$ times larger than the FOM time-step, thus reducing the temporal dimensionality of the problem by 200 times. The mean-squared error of each ROM solution is shown in Figure~\ref{fig:re100_dtvary}. The G and APG ROMs are stable for all time-steps considered. Further, it is seen that varying the time-step has a minimal effect on the accuracy of the G and APG ROMs. In contrast, the accuracy of LSPG deteriorates if the time-step grows too large. This is due to the fact that, as shown in Ref.~\cite{carlberg_lspg_v_galerkin}, the stabilization added by LSPG depends on the time-step size. Optimal accuracy of the LSPG method requires an intermediate time-step. The ability of the APG and G ROMs to take large time-steps without a significant degradation in accuracy allows for significant computational savings. This advantage is further amplified when large time-steps can be taken with an explicit solver, as is the case here. This will be discussed in more detail in Section~\ref{sec:cylhyper}.
\begin{figure}
\begin{center}
\begin{subfigure}[t]{0.49\textwidth}
\includegraphics[trim={0cm 0cm 0cm 0cm},clip,width=1.\linewidth]{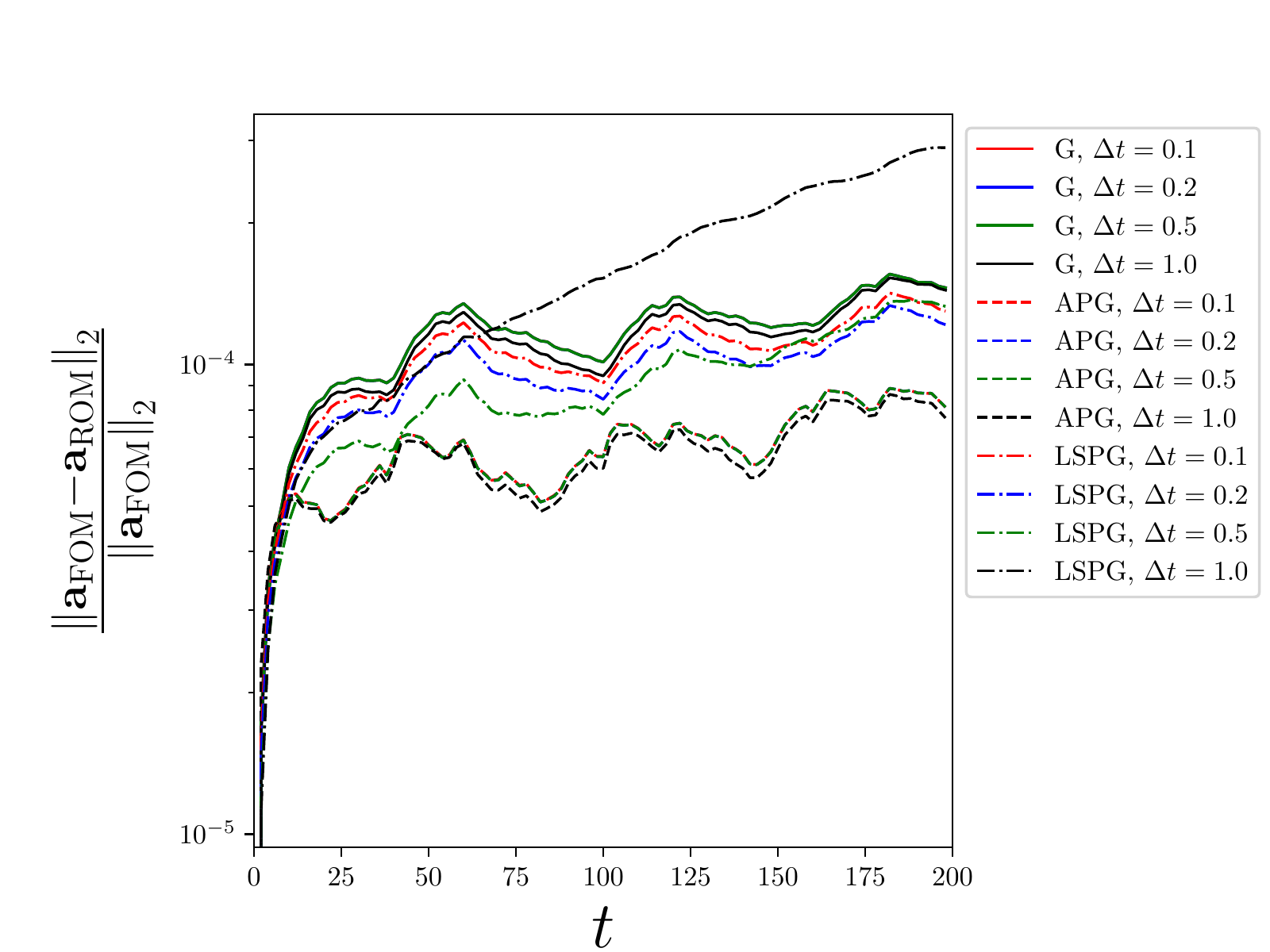}
\caption{Normalized error as a function of time.}
\end{subfigure}
\begin{subfigure}[t]{0.49\textwidth}
\includegraphics[trim={0cm 0cm 0cm 0cm},clip,width=1.\linewidth]{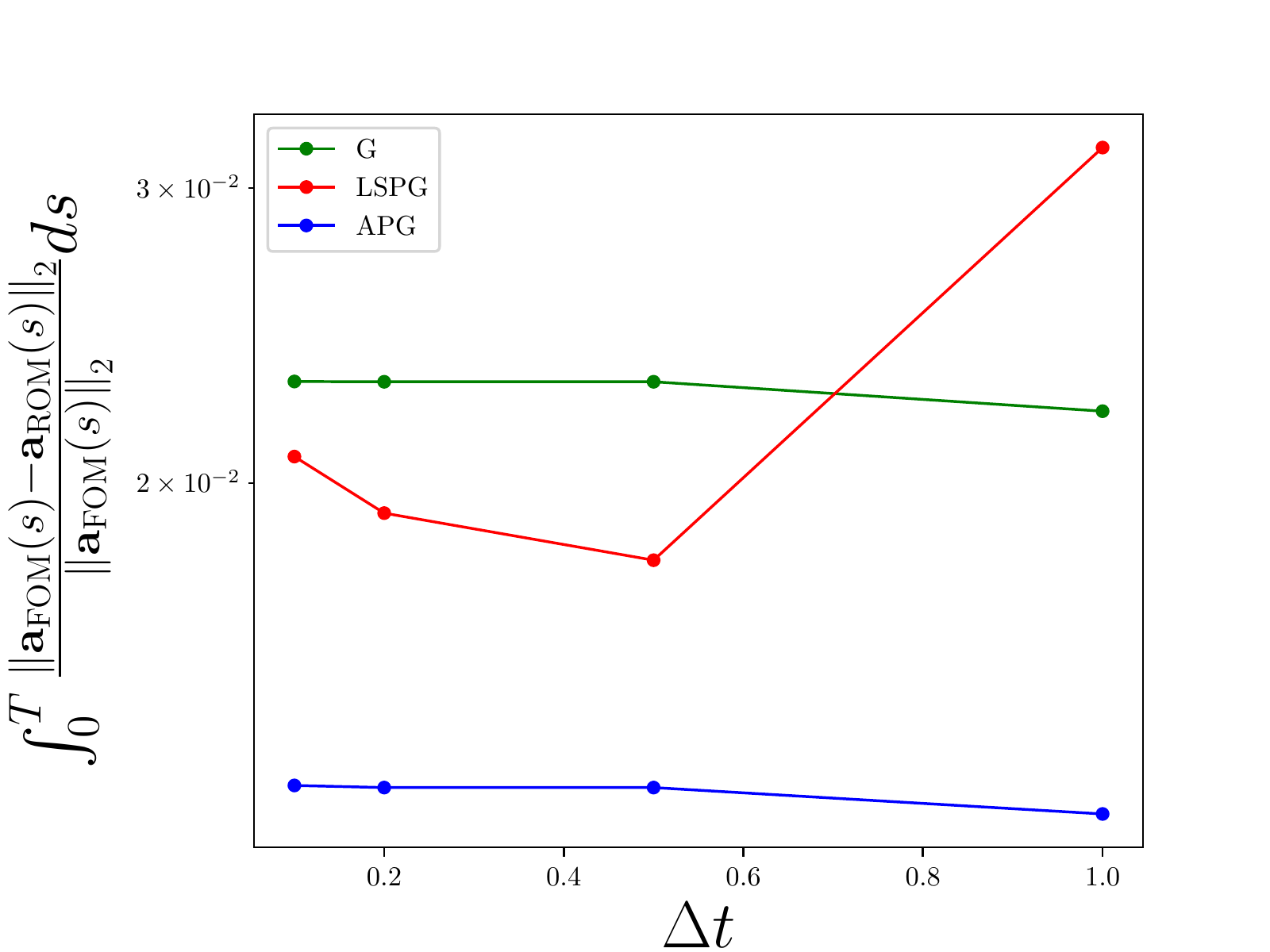}
\caption{Integrated normalized error as a function of the time-step.}
\end{subfigure}
\end{center}
\caption{Results for time-step study of flow over cylinder at Re$=100$.}
\label{fig:re100_dtvary}
\end{figure}

Lastly, we numerically investigate the sensitivity of APG to the parameter $\tau$ by running simulations for $\tau=[0.001,0.01,0.1,0.3,0.5,1.]$. All simulations are run at $\Delta t = 0.5$. The results of the simulations are shown in Figure~\ref{fig:re100_tau}. It is seen that, for all values of $\tau$, the APG ROM produces a better solution than the G ROM. The lowest error is observed for an intermediate value of $\tau$, in which case the APG ROM leads to over a $50\%$ reduction in error from the G ROM. As $\tau$ approaches zero, the APG ROM solution approaches the Galerkin ROM solution. Convergence plots for LSPG as a function of $\Delta t$ are additionally shown in Figure~\ref{subfig:re100b_tau}. It is seen that the optimal time-step in LSPG is similar to the optimal value of $\tau$ in APG.

\begin{figure}
\begin{center}
\begin{subfigure}[t]{0.45\textwidth}
\includegraphics[trim={0cm 0cm 0cm 0cm},clip,width=1.\linewidth]{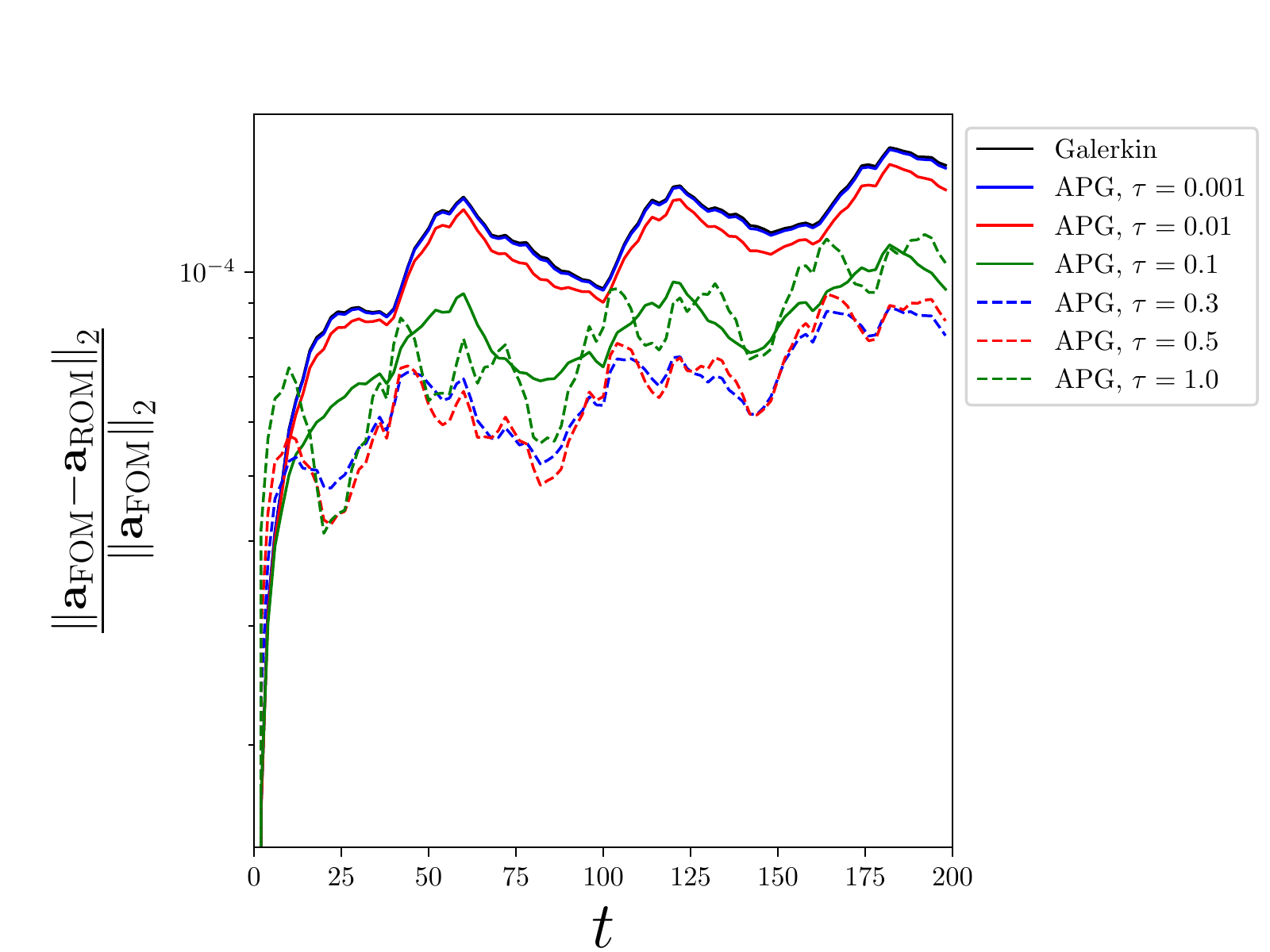}
\caption{Normalized error as a function of time}
\label{subfig:re100a_tau}
\end{subfigure}
\begin{subfigure}[t]{0.45\textwidth}
\includegraphics[trim={0cm 0cm 0cm 0cm},clip,width=1.\linewidth]{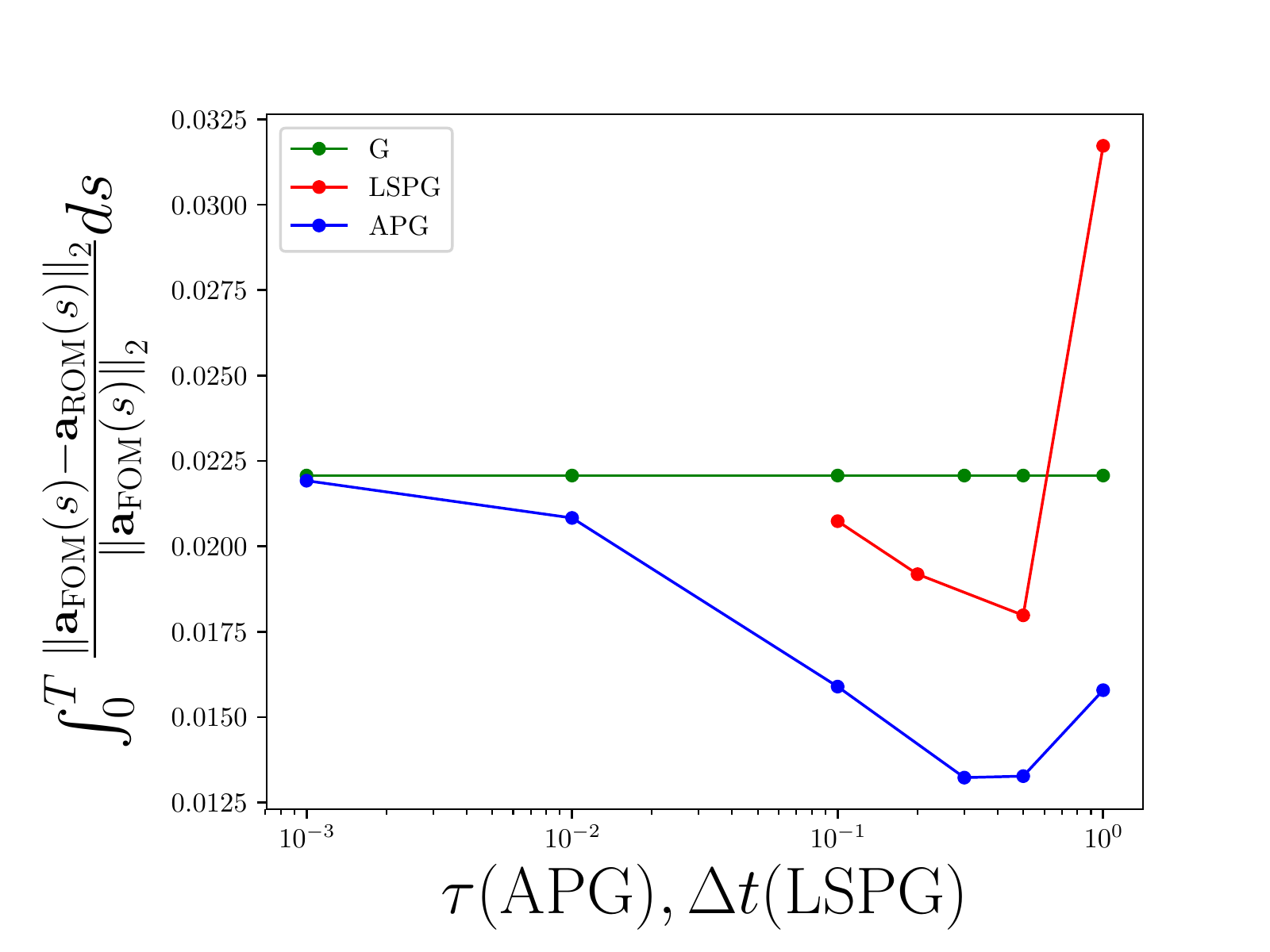}
\caption{Integrated normalized error as a function of $\tau$. Results for how the time-step $\Delta t$ impacts LSPG are included for reference.}
\label{subfig:re100b_tau}
\end{subfigure}
\end{center}
\caption{Summary of numerical results investigating the impact of the parameter $\tau$ on the performance of the Adjoint Petrov-Galerkin method.}
\label{fig:re100_tau}
\end{figure}

\subsubsection{Parametric Study of Reynolds Number Dependence}
Next, the ability of the different ROMs to interpolate between between different Reynolds numbers is studied. Simulations at Reynolds numbers of Re=100,150,200,250, and 300 with Basis $\#2$ and $\#3$ are performed.  All cases are initialized from the Re=100 simulation. Note that the trial spaces in the ROMs were constructed from statistically steady-state FOM simulations of Re=100,200,300. The Reynolds number is modified by changing the viscosity. %To assess the long-time behavior of the different methods, all cases are integrated until $t = 10,000$. 

Figure~\ref{fig:ROM_summary} summarizes the amplitude of the lift coefficient signal as well as the shedding frequency for the various methods. The values reported in Figure~\ref{fig:ROM_summary} are computed from the last 150s of the simulations\footnote{Not all G ROMs reached a statistically steady state over the time window considered}. The Galerkin ROM is seen to do poorly in predicting the lift coefficient amplitude for both Basis $\#2$ and Basis $\#3$. Unlike in the Re=100 case, enhancing the basis dimension does not improve the performance of the ROMs. Both the LSPG and APG ROMs are seen to offer much improved predictions over the Galerkin and ROM. This result is promising, as the ultimate goal of reduced-order modeling is to provide predictions in new regimes.

The results presented in this example highlight the shortcomings of the Galerkin ROM. To obtain results that are even qualitatively correct for the Re=\{150,200,250,300\} cases, either APG or LSPG must be used. As reported in Figure~\ref{subfig:re100d}, explicit APG is over an order of magnitude faster than LSPG, and implicit APG with a JFNK solver is anywhere from 2x to 5x faster than LSPG. Therefore, APG is the best-performing method for this example. 
% For $Re = 150,200$ and $250$, the G ROM asymptotically approached a steady-state solution (i.e. no vortex shedding). An example of this is shown for the $Re=200$ case in Figure~\ref{fig:re200_longtime}. The G ROM is additionally not able to accurately predict the $Re=300$ or $Re=500$ cases in spite of the fact that these cases were included in the training set used to construct the POD basis vectors. Further, the G ROM is unstable for the $Re=500$ case. It is interesting to note that, despite poorly predicting the amplitude of the lift coefficient, the G ROM does a reasonable job predicting the shedding frequency.

%The performance of the LSPG ROM is comparable to the G ROM. The predictions in the lift coefficient amplitude for the $Re = 150,200$ and $250$ cases asymptotically decay to zero. The LSPG ROM is not able to accurately reproduce any of the cases in the training set. The LSPG ROM did, however, stabilize the $Re=500$ case. Similar to the G ROM, the LSPG method also does a reasonable job predicting the shedding frequency.

%Reduced-order models using the MZ-$\tau$-model outperform the other methods. For all cases, the predictions were observed to reach a periodic steady state within a few cycles of the vortex street. The lift coefficient amplitude is characterized reasonably well across all Reynolds numbers. The method is additionally able to accurately reproduce the cases used to construct the POD basis  ($Re=100,300,$ and $500$).  The predicted shedding frequency is qualitatively correct, although discrepancies are seen for cases outside of the training set.

 \begin{figure}
\begin{center}
\begin{subfigure}[t]{0.49\textwidth}
\includegraphics[trim={0cm 0cm 0cm 0cm},clip,width=1.\linewidth]{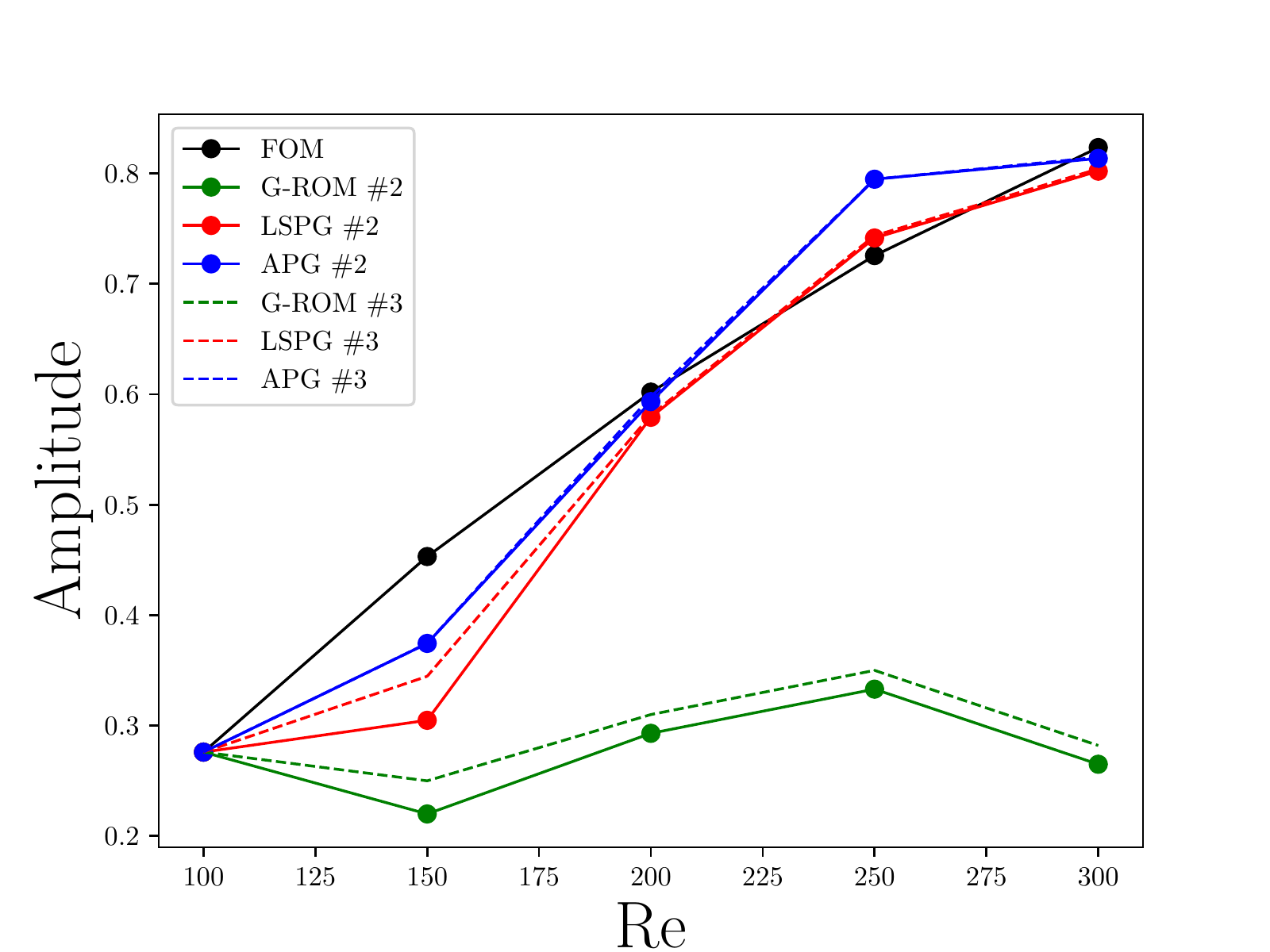}
\caption{Prediction for lift coefficient amplitudes}
\end{subfigure}
\begin{subfigure}[t]{0.49\textwidth}
\includegraphics[trim={0cm 0cm 0cm 0cm},clip,width=1.\linewidth]{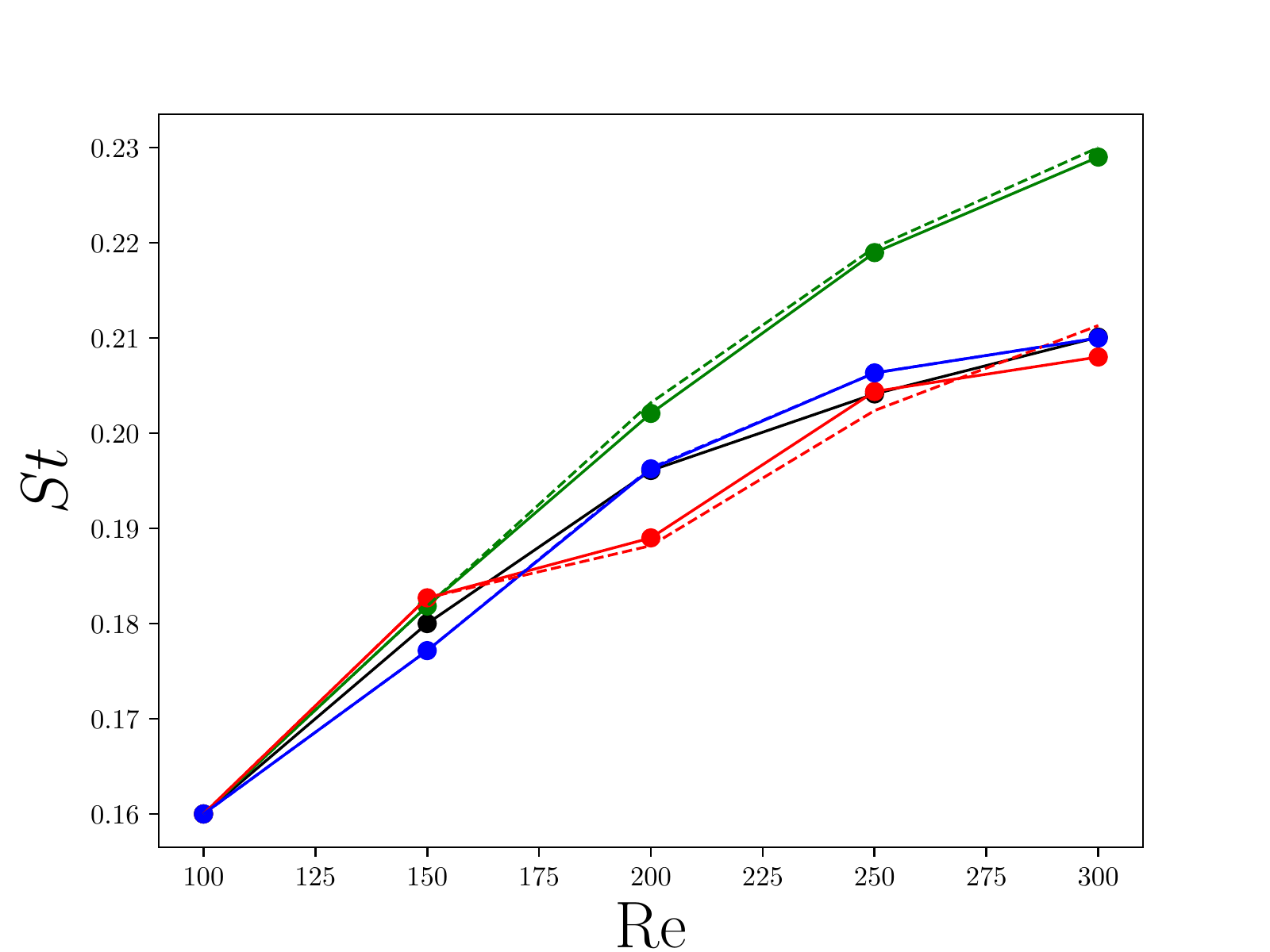}
\caption{Prediction for shedding frequency}
\end{subfigure}

\begin{subfigure}[t]{0.49\textwidth}
\includegraphics[trim={0cm 0cm 0cm 0cm},clip,width=1.\linewidth]{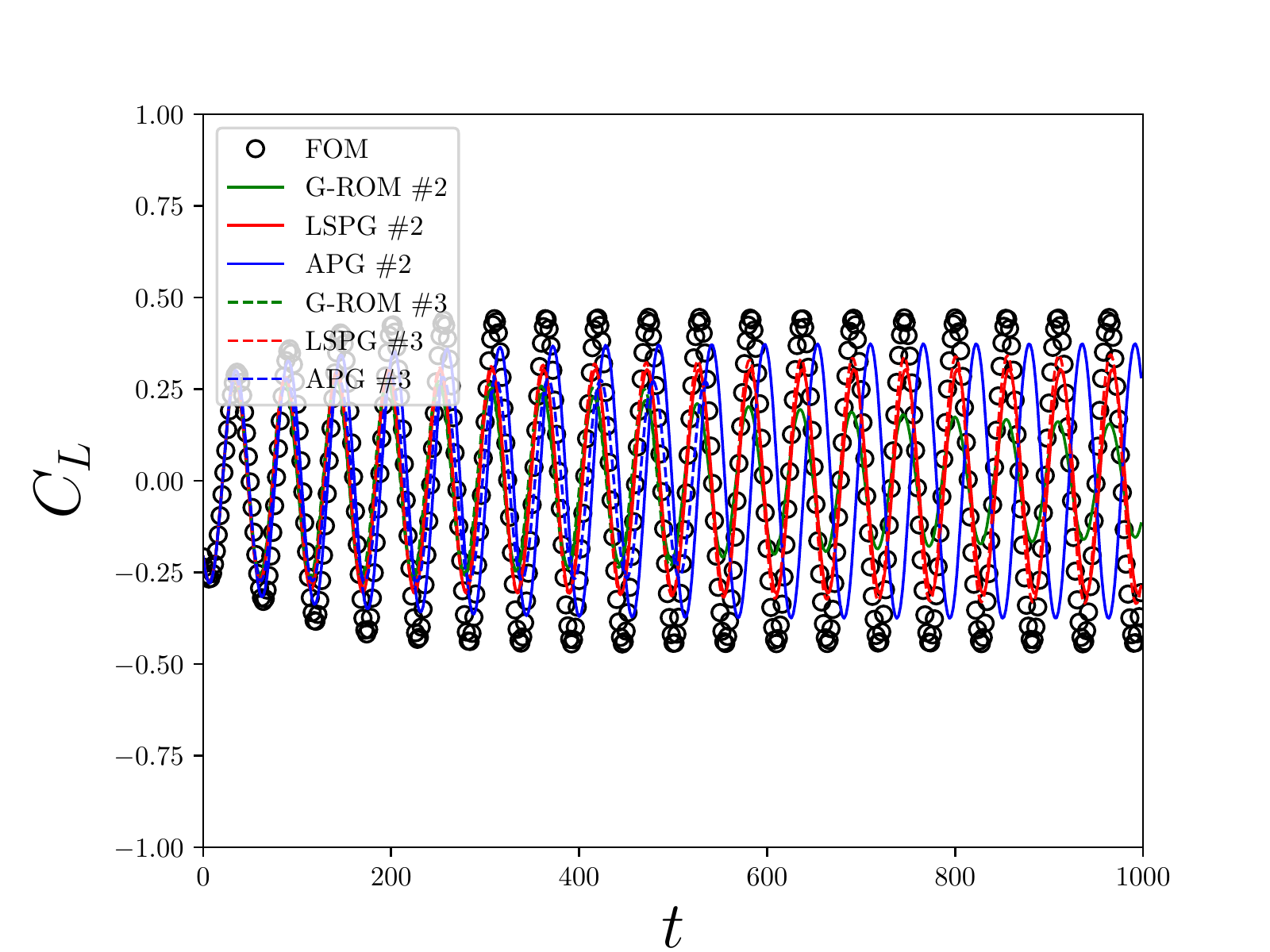}
\caption{Re=$150$}
\end{subfigure}
\begin{subfigure}[t]{0.49\textwidth}
\includegraphics[trim={0cm 0cm 0cm 0cm},clip,width=1.\linewidth]{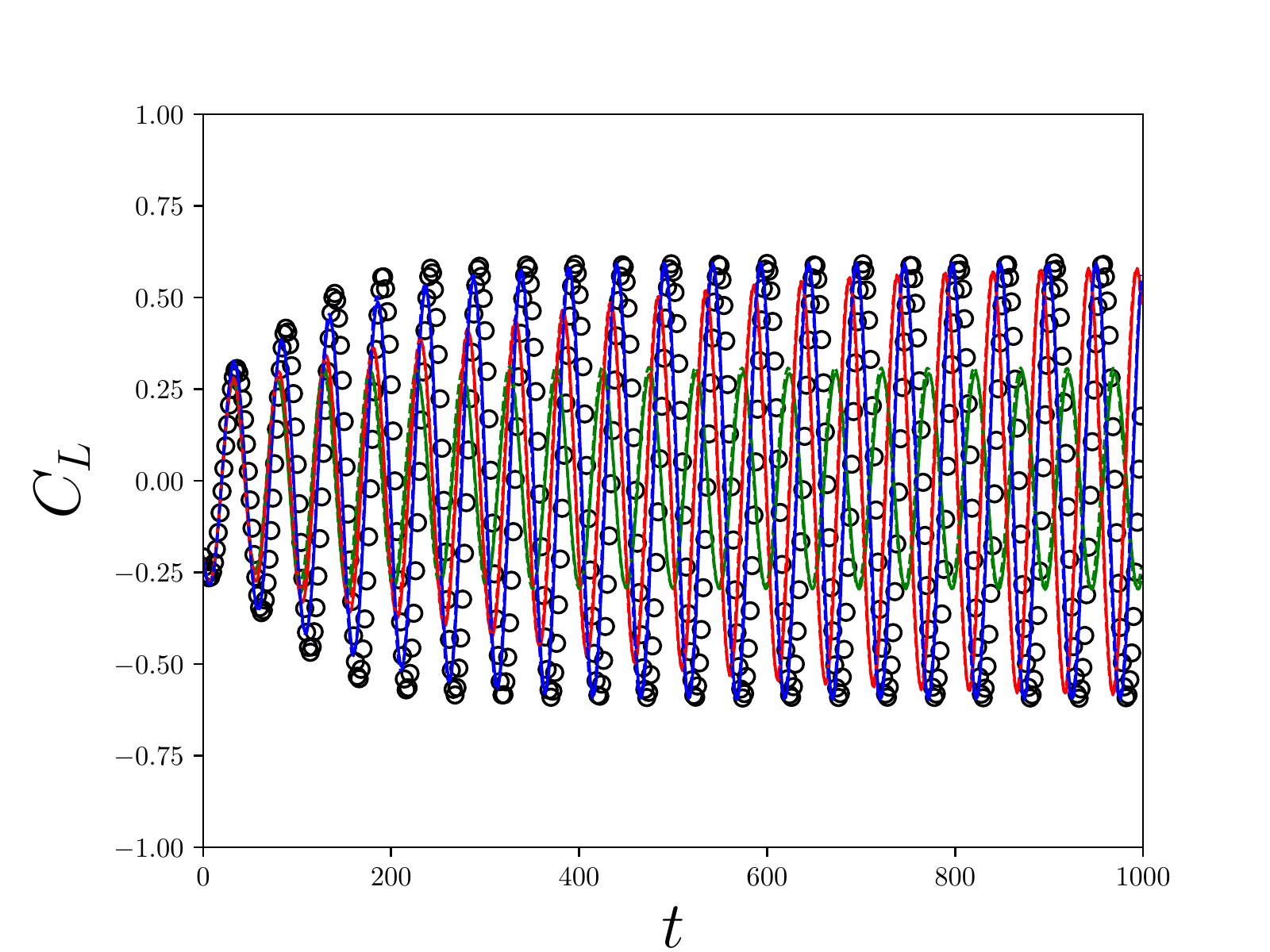}
\caption{Re=$200$}
\end{subfigure}
\begin{subfigure}[t]{0.49\textwidth}
\includegraphics[trim={0cm 0cm 0cm 0cm},clip,width=1.\linewidth]{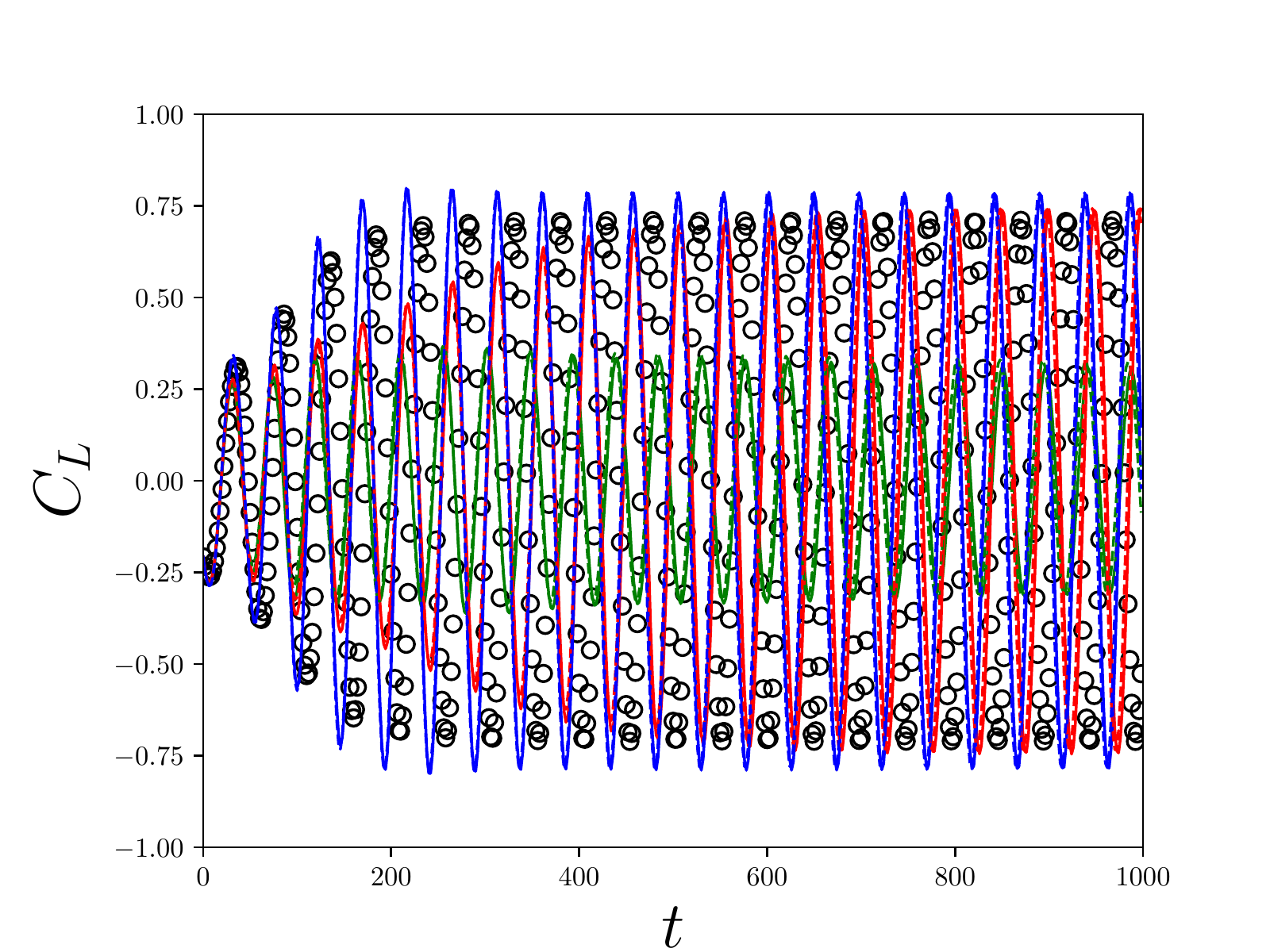}
\caption{Re=$250$}
\end{subfigure}
\begin{subfigure}[t]{0.49\textwidth}
\includegraphics[trim={0cm 0cm 0cm 0cm},clip,width=1.\linewidth]{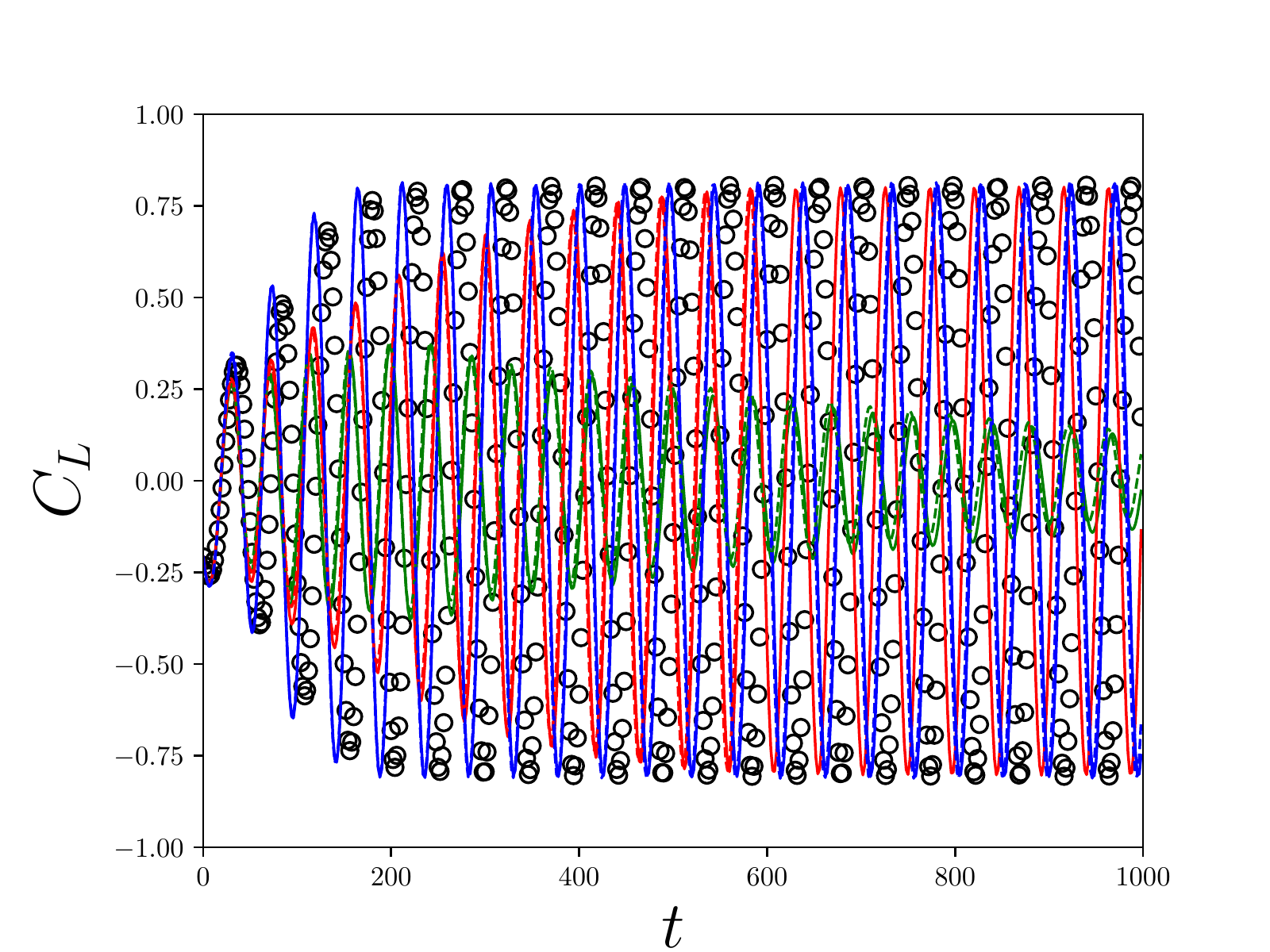}
\caption{Re=$300$}
\end{subfigure}

\end{center}
\caption{Summary of ROM simulations}
\label{fig:ROM_summary}
\end{figure}

\subsubsection{Flow Over Cylinder at Re=$100$ with Hyper-Reduction}\label{sec:cylhyper}
The last example considered is again flow over a cylinder, but this time the reduced-order models are augmented with hyper-reduction. The purpose of this example is to examine the performance of the different reduced-order models when fully equipped with state-of-the-art reduction techniques. For hyper-reduction, an additional snapshot matrix of the right-hand side is generated. This additional snapshot matrix is generated by following steps one through five provided in Section~\ref{sec:cyl_romsteps}. Hyper-reduction for the G ROM and the APG ROM is achieved through the Gappy POD method~\cite{everson_sirovich_gappy}. Hyper-reduction for LSPG is achieved through collocation using the same sampling points.\footnote{It is noted that collocated LSPG out-performed the GNAT method for this example, and thus GNAT is not considered.} When augmented with hyper-reduction, the trial basis dimension ($K$), right-hand side basis dimension ($r$), and number of sample points $(N_s)$ can impact the performance of the ROMs. Table~\ref{tab:rom_basis_details} summarizes the various permutations of $K$, $r$, and $N_s$ considered in this example. The sample points are selected through a QR factorization of the right-hand side snapshot matrix~\cite{qdeim_drmac}. These sample points are then augmented such that they contain every conserved variable and quadrature point at the selected cells. The sample mesh corresponding to Basis numbers 4,5, and 6 in Table~\ref{tab:rom_basis_details} is shown in Figure~\ref{fig:re100_sample}. Details on hyper-reduction and its implementation in our discontinuous Galerkin code are provided in Appendix~\ref{appendix:hyper}.  

\begin{table}[]
\begin{tabular}{c c c c c}
\hline
Basis \# & Trial Basis Dimension ($K$) &  RHS Basis Dimension ($r$)  & Sample Points ($N_S$) & Maximum Stable $\Delta t$\\
\hline
%1    & $ 11$ & $288000$ &  $288000$ & 4.0\\
%2    & $ 42$ & $288000$ &  $288000$ & 2.0\\
%3    & $ 86$ & $288000$ &  $288000$ & 1.0\\
1    & $ 11$ & $103$ & $4230$ & 4.0\\
2    & $ 42$ & $103$ & $4230$ & 2.0\\
3    & $ 86$ & $103$ & $4230$ & 1.0\\
4    & $ 11$ & $268$ & $8460$ & 4.0\\
5    & $ 42$ & $268$ & $8460$ & 2.0\\
6    & $ 86$ & $268$ & $8460$ & 1.0\\
\hline
\end{tabular}
\caption{Summary of the various reduced-order models evaluated on the flow over cylinder problem. The maximum stable $\Delta t$ for each basis is reported for the SSP-RK3 explicit time-marching scheme and was empirically determined.}
\label{tab:rom_basis_details}
\end{table}

%In this section, we perform the same study as in Section~\ref{sec:re100_rec}, but we now augment the ROMs with hyper-reduction and assess the computational efficiency of different methods. Hyper-reduction is achieved through the Gappy POD method with quadrature points selected by QR factorization of the right-hand side basis~\cite{qdeim}. Details on hyper-reduction and its implementation for the discontinuous Galerkin method are provided in Appendix~\ref{appendix:QDEIM}. 

Flow at Re=$100$ is considered. All simulations are performed at the maximum stable time-step for a given basis dimension, as summarized in Table~\ref{tab:rom_basis_details}. Figure~\ref{fig:re100_qdeim_pareto} shows the integrated error as a function of relative wall time for the various ROM techniques and basis numbers. All methods  show significant computational speedup, with the G and APG ROMs producing wall-times up to 5000 times faster than the FOM while retaining a reasonable MSE. It is noted that the majority of this speed up is attributed to the increase in time-step size. %Of the cases considered, the overall Pareto front includes three Galerkin ROM solutions and one Adjoint Petrov-Galerkin solution. 
For a given level of accuracy, LSPG is significantly more expensive than the G and APG ROMs. The reason for this expense is three-fold. First, LSPG is inherently implicit. For a given time-step size, an implicit step is more expensive than an explicit step. Second, LSPG requires an intermediate time-step for optimal accuracy. In this example, this intermediate time-step is small enough that the computational gains that could be obtained with an implicit method are negated. The third reason is that, for each Gauss-Newton iteration, LSPG requires the computation of the action of the Jacobian on the trial space basis, $\vcoarsevec$. This expense can become significant for large basis dimensions as it requires the computation of a dense Jacobian. It is noted that it may be possible to achieve computational speed-ups in our implementation of LSPG through the development of a least-squares solver more tailored to LSPG, sparse Jacobian updates, or Jacobian approximation techniques.

%It is noted that the observed decrease in computational cost is due largely to the ability to take much larger time-steps in the ROMs than in the FOM. The G ROM is seen to be slightly more efficient than the APG ROM in this case. This is due to the increased expense of the APG ROM. Both the G ROM and APG ROM significantly outperform the LSPG method. This discrepancy in computational expense is due to the fact that LSPG requires both an implicit time marching scheme and, for accuracy, a sufficiently low time step. In contrast, both the G ROM and APG ROM are integrated with explicit time marching schemes at large time steps. These results suggest that the APG ROM is a practical method for problems utilizing hyper-reduction.  %The basis for the right-hand side is constructed using the POD approach with an energy tolerance of $97\%$. 

\begin{figure}
\begin{center}
\begin{subfigure}[t]{0.48\textwidth}
\includegraphics[trim={4cm 0cm 4cm 0cm},clip,width=1.\linewidth]{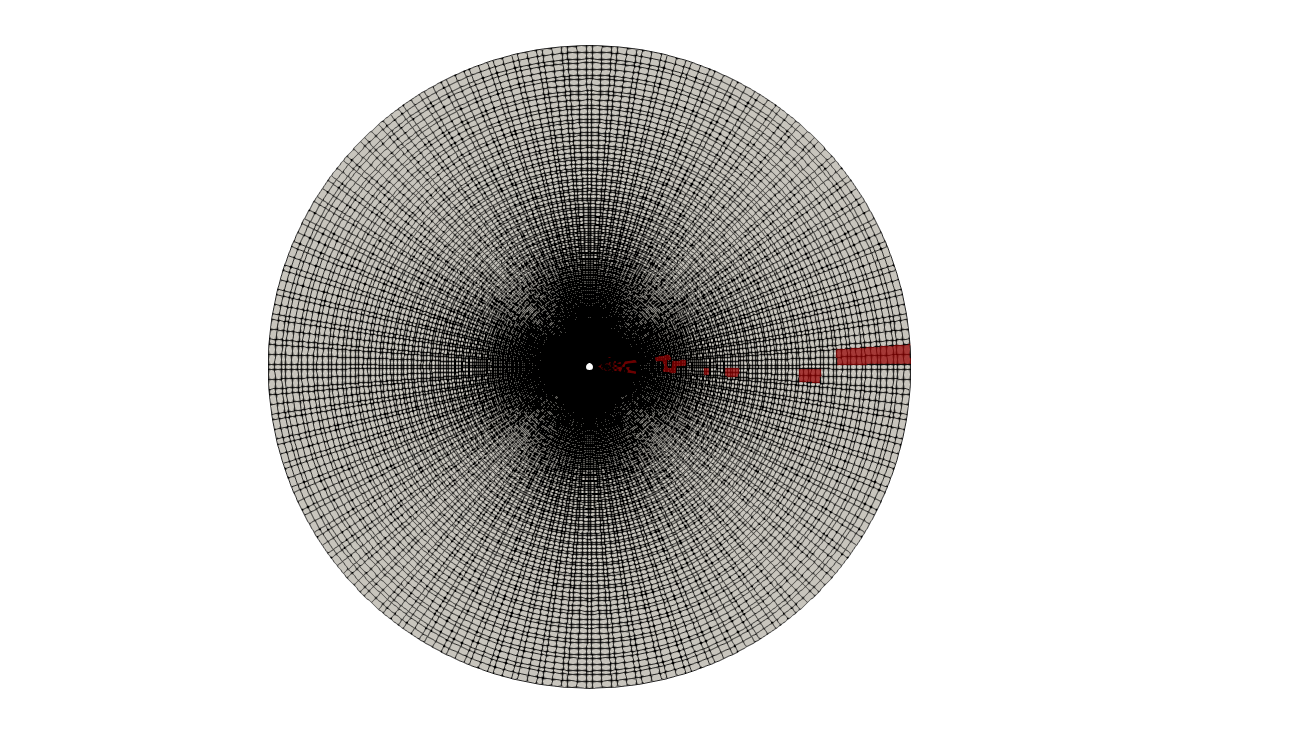}
\caption{Full sample mesh}
\label{subfig:re100_sampling_full}
\end{subfigure}
\begin{subfigure}[t]{0.48\textwidth}
\includegraphics[trim={0cm 0cm 4cm 0cm},clip,width=1.\linewidth]{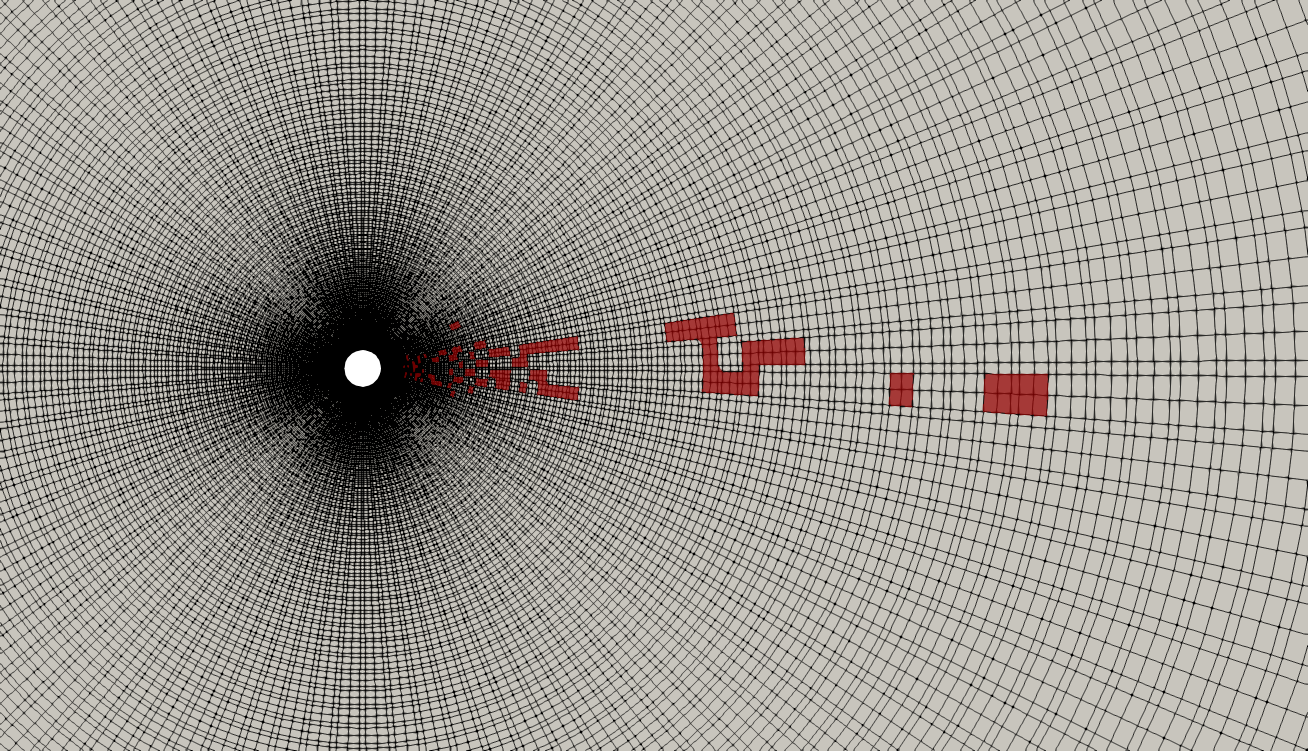}
\caption{Close up of wake}
\label{subfig:re100_sampling_zoom}
\end{subfigure}
\end{center}
\caption{Mesh used for hyper-reduction. Cells colored in red are the sampled cells. Note that all conserved variables and quadrature points are computed within a cell.}
\label{fig:re100_sample}
\end{figure}

\begin{figure}
\begin{center}
\begin{subfigure}[t]{0.49\textwidth}
\includegraphics[trim={0cm 0cm 0cm 0cm},clip,width=1.\linewidth]{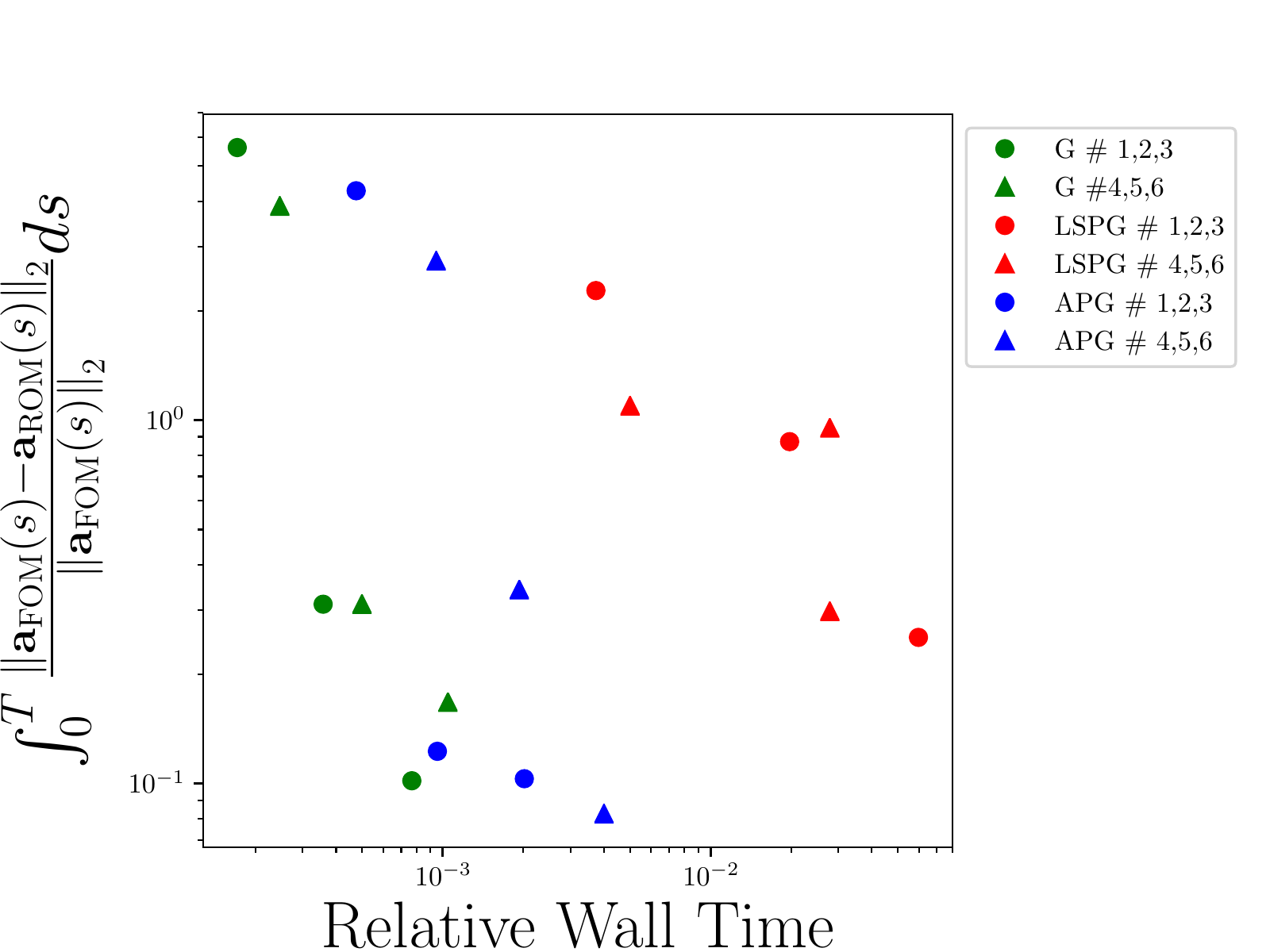}
\caption{Wall time vs normalized error for hyper-reduced ROMs.}
\end{subfigure}
\begin{subfigure}[t]{0.49\textwidth}
\includegraphics[trim={0cm 0cm 0cm 0cm},clip,width=1.\linewidth]{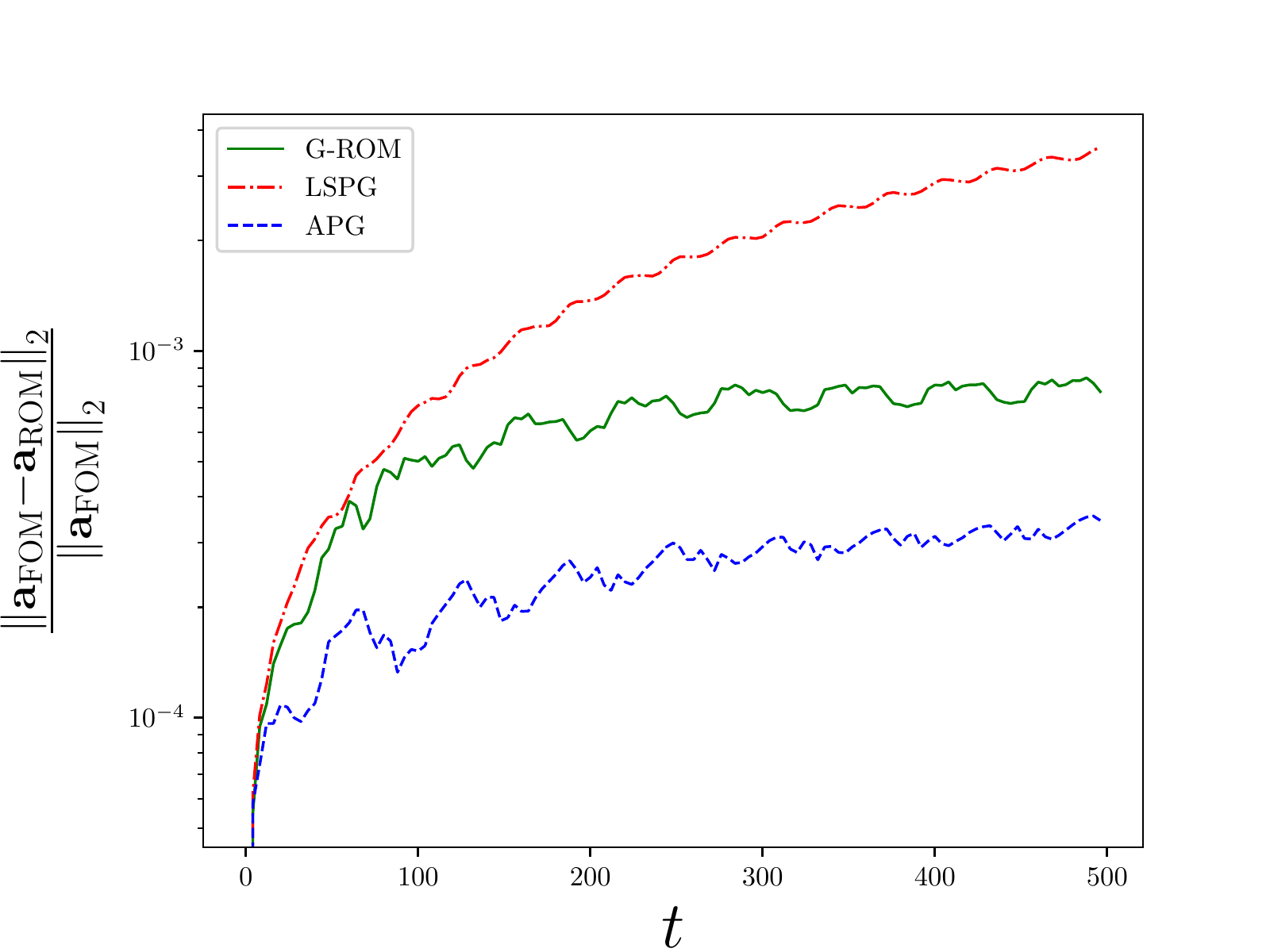}
\caption{Normalized error as a function of time for Basis \#5}
\end{subfigure}
\begin{subfigure}[t]{0.49\textwidth}
\includegraphics[trim={0cm 0cm 0cm 0cm},clip,width=1.\linewidth]{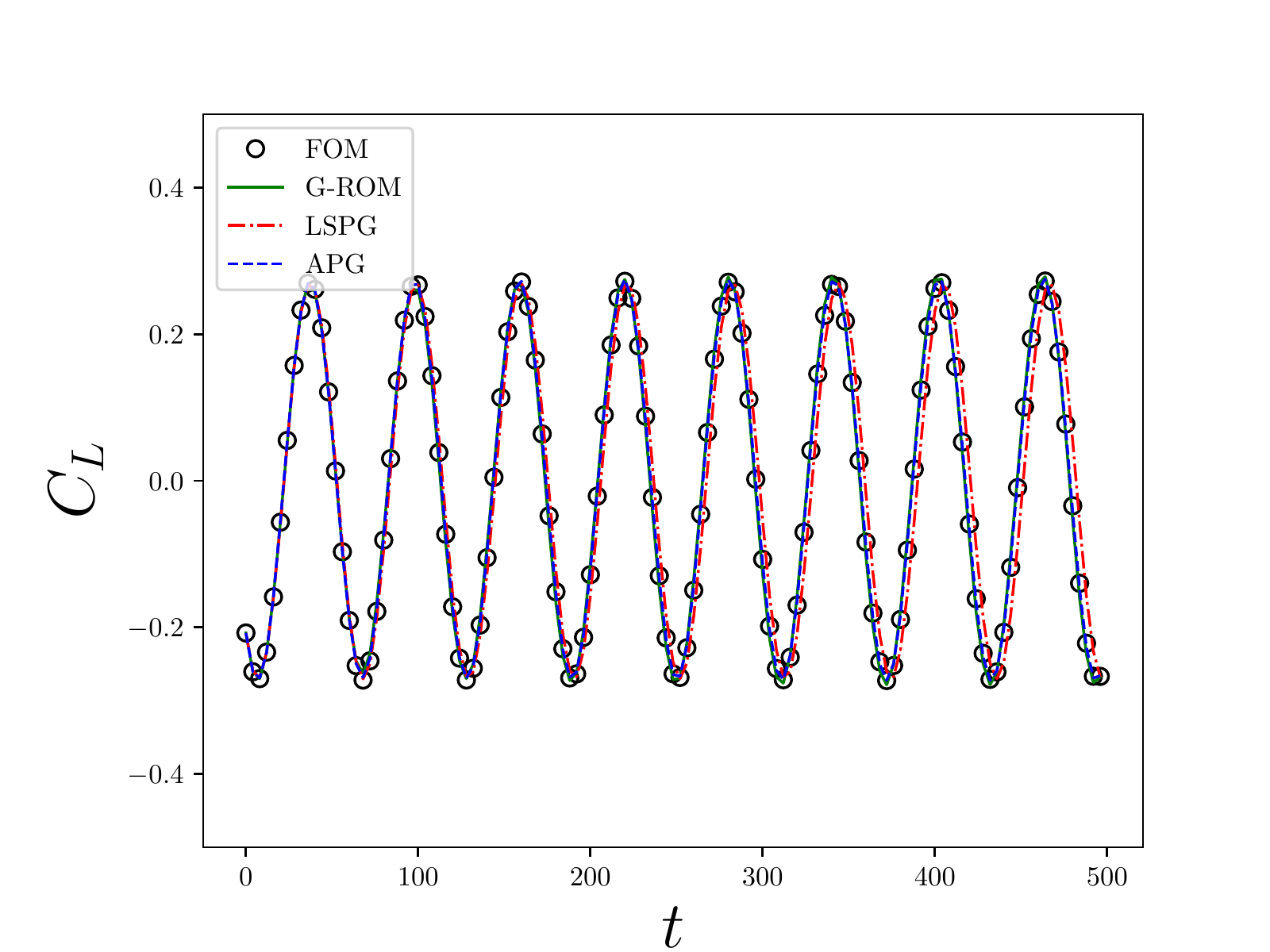}
\caption{Lift coefficient as a function of time for Basis \# 5.}
\end{subfigure}
\begin{subfigure}[t]{0.49\textwidth}
\includegraphics[trim={0cm 0cm 0cm 0cm},clip,width=1.\linewidth]{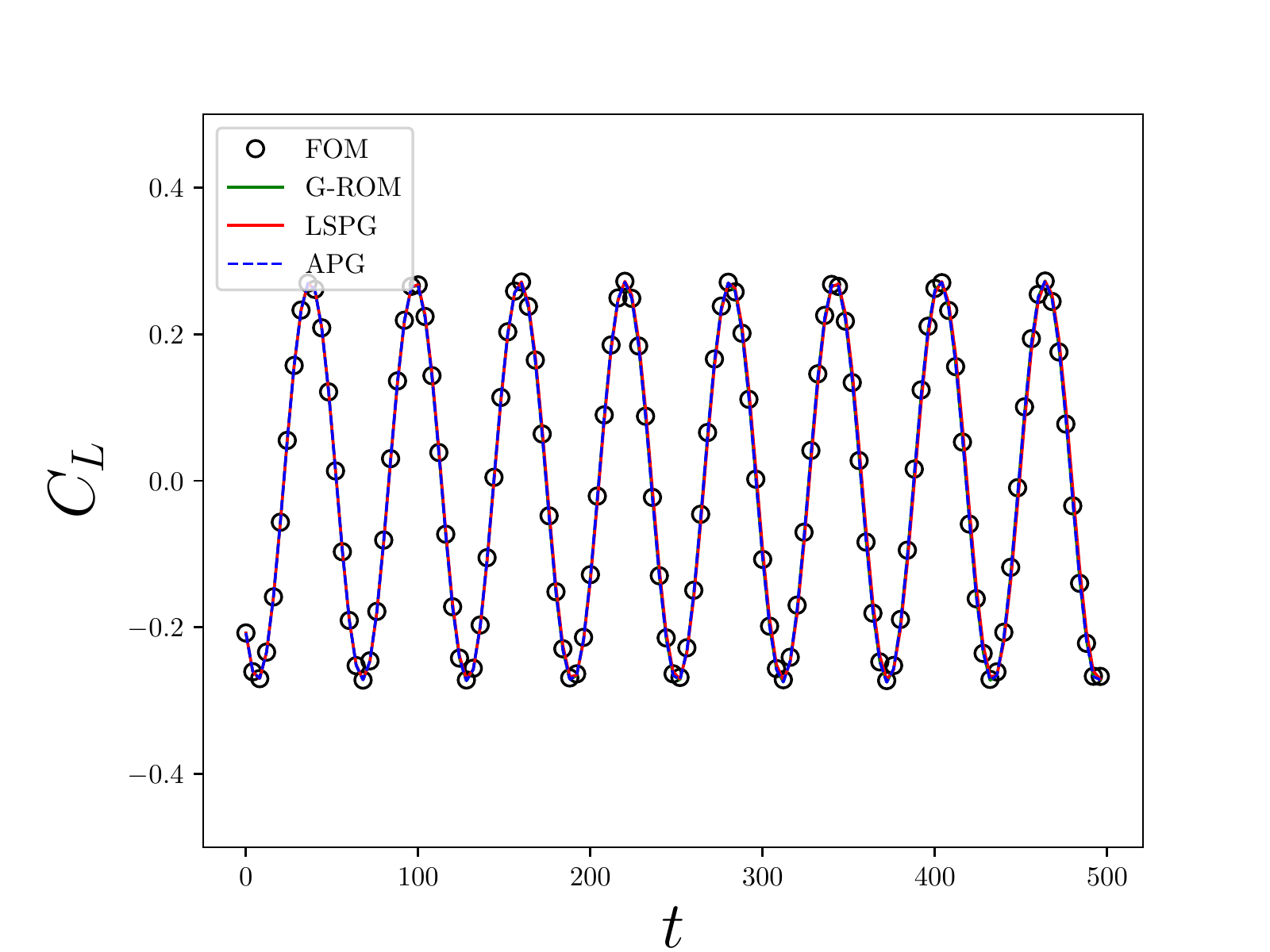}
\caption{Lift coefficient as a function of time for Basis \# 6.}
\end{subfigure}

\end{center}
\caption{Results for hyper-reduced ROMs at Re$=100$.}
\label{fig:re100_qdeim_pareto}
\end{figure}

\section{Conclusion}\label{sec:conclude}
This work introduced the Adjoint Petrov--Galerkin method for non-linear model reduction. Derived from the variational multiscale method and Mori-Zwanzig formalism, the Adjoint Petrov--Galerkin method is a Petrov--Galerkin projection technique with a non-linear time-varying test basis. The method is designed to be applied at the semi-discrete level, i.e., after spatial discretization of a partial differential equation, and is compatible with both implicit and explicit time integration schemes. The method displays commonalities with the adjoint-stabilization method used in the finite element community as well as the Least-Squares Petrov--Galerkin approach used in non-linear model-order reduction. Theoretical error analysis was presented that showed conditions under which the Adjoint Petrov--Galerkin ROM may have lower \textit{a priori} error bounds than the Galerkin ROM. The theoretical cost of the Adjoint Petrov--Galerkin method was considered for both explicit and implicit schemes, where it was shown to be approximately twice that of the Galerkin method. In the case of implicit time integration schemes, the Adjoint Petrov--Galerkin ROM was shown to be capable of being more efficient than Least-Squares Petrov--Galerkin when the non-linear system is solved via Jacobian-Free Newton-Krylov methods. 

Numerical experiments with the Adjoint Petrov--Galerkin, Galerkin, and Least-Squares Petrov--Galerkin method were presented for the Sod shock tube problem and viscous compressible flow over a cylinder parameterized by the Reynolds number. In all examples, the Adjoint Petrov--Galerkin method provided more accurate predictions than the Galerkin method for a fixed basis dimension. Improvements over the Least-Squares Petrov--Galerkin method were observed in most cases. In particular, the Adjoint Petrov--Galerkin method was shown to provide relatively accurate predictions for the cylinder flow at Reynolds numbers outside of the training set used to construct the POD basis. The Galerkin method, with both an equivalent and an enriched trial space, failed to produce accurate results in these cases. Additionally, numerical evidence showed a correlation between the spectral radius of the reduced Jacobian and the optimal value of the stabilization parameter appearing in the Adjoint Petrov--Galerkin method.

When augmented with hyper-reduction, the Adjoint Petrov--Galerkin ROM was shown to be capable of producing accurate predictions within the POD training set with computational speedups up to 5000 times compared to the full-order models. This speed-up is a result of hyper-reduction of the right-hand side, as well as the ability to use explicit time integration schemes at large time-steps. A study of the Pareto front for simulation error versus relative wall time showed that, for the compressible cylinder problem, the Adjoint Petrov--Galerkin ROM is competitive with the Galerkin ROM, and more efficient than the LSPG ROM for the problems considered.

\section{Acknowledgements}\label{sec:acknowledge}
The authors acknowledge support from the US Air Force Office of Scientific Research through the Center of Excellence Grant FA9550-17-1-0195 (Tech. Monitors: Mitat Birkan \& Fariba Fahroo) and the project LES Modeling of Non-local effects using Statistical Coarse-graining (Tech. Monitors: Jean-Luc Cambier \& Fariba Fahroo). E. Parish acknowledges an appointment to the Sandia National Laboratories John von Neumann fellowship. This paper describes objective technical results and analysis. Any subjective
views or opinions that might be expressed in the paper do not necessarily
represent the views of the U.S. Department of Energy or the United States
Government Sandia National Laboratories is a multimission laboratory managed
and operated by National Technology and Engineering Solutions of Sandia, LLC.,
a wholly owned subsidiary of Honeywell International, Inc., for the U.S.
Department of Energy’s National Nuclear Security Administration under contract
DE-NA-0003525.

\begin{appendices}

\section{Hyper-reduction for the Adjoint Petrov--Galerkin Reduced-Order Model}\label{appendix:hyper}

In the numerical solution of non-linear dynamical systems, the evaluation of the non-linear right-hand side term usually accounts for a large portion (if not the majority) of the computational cost. Equation~\ref{eq:GROM_modal} shows that standard projection-based ROMs are incapable of reducing this cost, as the evaluation of $\mathbf{R}(\vcoarsevec \acoarsevec)$ still scales with the number of degrees of freedom $N$. If this issue is not addressed, and there is no reduction in temporal dimensionality, the ROM will typically be \emph{more} expensive than the FOM due to additional matrix-vector products from projection onto the reduced-order space. Techniques for overcoming this bottleneck are typically referred to as hyper-reduction methods.  The thesis of hyper-reduction methods is that, instead of computing the entire right-hand side vector, only a few entries are calculated. The missing entries can either be ignored (as done in collocation methods) or reconstructed (as done in the discrete interpolation and gappy POD methods). This section outlines the Gappy POD method, selection of the sampling points through QR factorization, and the algorithm for the hyper-reduced Adjoint Petrov--Galerkin ROM.

\subsection{Gappy POD}
The gappy POD method~\cite{everson_sirovich_gappy} seeks to find an approximation for $\mathbf{R}(\cdot)$ that evaluates the right-hand side term at a reduced number of spatial points $r \ll N$. This is achieved through the construction of a trial space for the right-hand side and least-squares reconstruction of a sampled signal. The offline steps required in the Gappy POD method are given in Algorithm~\ref{alg:gappy_hyper_offline}. %  Given a non-linear ODE of the form in Eq.~\ref{eq:FOM}, the general framework of gappy POD for POD-Galerkin ROMs is as follows:
%This work investigates the use of Sparse sampling method for ROMs is the discrete empirical interpolation method (DEIM)~\cite{deim}, which seeks to find an approximation for $\mathbf{R}(\cdot)$ that evaluates the right-hand side term at a reduced number of spatial points $r \ll N$. One means of accomplishing this is the discrete empirical interpolation method, or DEIM, first presented by Chaturantabut and Sorensen in~\cite{deim}. Given a non-linear ODE of the form in Eq.~\ref{eq:FOM}, the general framework of DEIM for POD-Galerkin ROMs is as follows.

\begin{algorithm}
\caption{Algorithm for the offline steps required for hyper-reduction via gappy POD.}
\label{alg:gappy_hyper_offline}
Offline Steps:
\begin{enumerate}
    \item Compute the full-order solution, storing $n_t$ time snapshots in the following matrices,
    \begin{alignat*}{2}
        %&\text{Solution snapshots:} &&\mathbf{S} = [\ufullvec_1 \quad \ufullvec_2 \quad ... \quad \ufullvec_{n_t}] \in \mathbb{R}^{N\times n_t} \\
        &\text{Right-hand side snapshots:} \; &&\mathbf{F} = [\mathbf{R}(\ufullvec_1) \quad \mathbf{R}(\ufullvec_2) \quad ... \quad \mathbf{R}(\ufullvec_{n_t})] \in \mathbb{R}^{N \times n_t}
    \end{alignat*}    
    %\item Compute the trial basis $\vcoarsevec \in \mathbb{R}^{N\times k}$, as described in Appendix A, from $\mathbf{S}$. 
    \item Compute the right-hand side POD basis $\RHSbasisvec \in \mathbb{R}^{N \times r}$, from $\mathbf{F}$.
    \item Compute the sampling point matrix $\sampvec = [\mathbf{e}_{p_1} \quad \mathbf{e}_{p2} \quad ... \mathbf{e}_{p_r}] \in \mathbb{R}^{N_{p} \times N}$, where $\mathbf{e}_i$ is the $i$th cannonical unit vector and $N_p$ is the number of sampling points. %These unit vectors indicate the spatial points at which the right-hand side will be evaluated. Methods for computing this matrix will be discussed below.
    \item Compute the stencil matrix $\sampvec_s = [\mathbf{e}_{s_1} \quad \mathbf{e}_{s2} \quad ... \mathbf{e}_{s_r}] \in \mathbb{R}^{N_{s} \times N}$, where $\mathbf{e}_i$ is the $i$th cannonical unit vector and $N_s$ is the number of stencil points required to reconstruct the residual at the sample points. Note $N_s \ge N_p$. %These unit vectors indicate the spatial points at which the right-hand side will be evaluated. Methods for computing this matrix will be discussed below.

    \item Compute the least-squares reconstruction matrix: $\big[ \mathbf{P}^T \RHSbasisvec \big]^{+}$, where the superscript $+$ denotes the pseudo-inverse. 
 Note that this matrix corresponds to the solution of the least-squares problem for a gappy signal $\mathbf{f} \in \mathbb{R}^N$:
$$ \afullvec_{\mathbf{f}} = \underset{\mathbf{b} \in \mathbb{R}^r}{\text{argmin}} || \sampvec^T \RHSbasisvec \mathbf{b}   - \sampvec^T \mathbf{f}||,$$
which has the solution,
$$\afullvec_{\mathbf{f}} = \bigg[ \mathbf{P}^T \RHSbasisvec \bigg]^{+} \mathbf{f}.$$
%    \item Pre-compute the projected reconstruction matrix: $\vcoarsevec^T \RHSbasisvec \big[ \mathbf{P}^T \RHSbasisvec \big]^{+}$
\end{enumerate}
\end{algorithm}

Note that, because the gappy POD approximation of the right-hand side only samples the right-hand side term at $N_p$ spatial points, the cost of the ROM no longer scales with the full-order degrees of freedom $N$, but instead with the number of POD basis modes $K$ and the number of sample points $N_p$. Thus, the cost of evaluating the full right-hand side may be drastically reduced at the price of storing another snapshot matrix $\mathbf{F}$ and computing another POD basis $\RHSbasisvec$ in the offline stage of computation. Furthermore, the product $\vcoarsevec^T \RHSbasisvec \big[ \sampvec^T \RHSbasisvec \big]^{+}$ may be precomputed during the offline stage if both $\sampvec$ and $\RHSbasisvec$ remain static throughout the simulation. This results in an relatively small $K\times N_p$ matrix. As such, an increase in offline computational cost may produce a significant decrease in online computational cost.

\subsection{Selection of Sampling Points}
Step 3 in Algorithm~\ref{alg:gappy_hyper_offline} requires the construction of the sampling point matrix, for which several methods exist. In the discrete interpolation method proposed by Chaturantabut and Sorensen~\cite{deim}, the sample points are selected inductively from the basis $\RHSbasisvec$, based on an error measure between the basis vectors and approximations of the basis vectors via interpolation. The method proposed by Drmac and Gugercin~\cite{qdeim_drmac} leverages the rank-revealing QR factorization to compute $\sampvec$. Dynamic updates of the $\RHSbasisvec$ and $\sampvec$ via periodic sampling of the full-order right-hand side term is even possible via the methods developed by Peherstorfer and Willcox~\cite{adeim_peherstorfer}. 

The 2D compressible cylinder simulations presented in this manuscript uses a modified version of the rank-revealing QR factorization proposed in~\cite{qdeim_drmac} to obtain the sampling points. The modifications are added to enhance the stability and accuracy of the hyper-reduced ROM within the discontinuous Galerkin method. Algorithm~\ref{alg:qdeim} outlines the steps used in this manuscript to compute the sampling points.

\begin{algorithm}
\caption{Algorithm for QR-factorization-based selection of sampling matrix.}
\label{alg:qdeim}
Input: Right-hand side POD basis $\RHSbasisvec \in \mathbb{R}^{N \times r}$ \;
\newline
Output: Sampling matrix $\sampvec$ \;
\newline
Steps:
\begin{enumerate}
    %\item Compute the right-hand side POD basis $\RHSbasisvec \in \mathbb{R}^{N \times r}$
    \item Transpose the right-hand side POD basis, $\RHSbasisvec' = \RHSbasisvec^T$
    \item Compute the rank-revealing QR factorization of $\RHSbasisvec'$, generating a permutation matrix $\Gamma \in \mathbb{R}^{N \times N}$, unitary $\mathbf{Q} \in \mathbb{R}^{r \times r}$, and orthonormal $\mathbf{R} \in \mathbb{R}^{r \times N}$ such that
    \begin{equation*}
        \RHSbasisvec' \Gamma = \mathbf{QR}
    \end{equation*}
    Details on computing rank-revealing QR decompositions can be found in~\cite{qr_decomp_gu}, though many math libraries include optimized routines for this operation.
    \item From the permutation matrix $\Gamma$, select the first $r$ columns to form the interpolation point matrix $\sampvec$. 
    \item When applied to systems of equations, sampling approaches that select only specific indices of the residual can be inaccurate due to the fact that, at a given cell, the residual of all unknowns at that cell may not be calculated. This issue is further exacerbated in the discontinuous Galerkin method, where each cell has a number of quadrature points. As such, we perform the additional step: 
    \begin{enumerate}
    \item Augment the sampling point matrix, $\sampvec$, with additional columns such that all unknowns are computed at the mesh cells selected by Step 3. In the present context, these additional unknowns correspond to each conserved variable and quadrature point in the selected cells. This step leads to $N_p > r$.
    \end{enumerate}
\end{enumerate}

\end{algorithm}

\subsection{Hyper-Reduction of the Adjoint Petrov--Galerkin ROM}
Lastly, the online steps required for an explicit Euler update to the Adjoint Petrov--Galerkin ROM with Gappy POD hyper-reduction is provided in Algorithm~\ref{alg:alg_ag_hyper}. It is worth noting that Step 3 in Algorithm~\ref{alg:alg_ag_hyper} requires one to reconstruct the right-hand side at the stencil points. Hyper-reduction via a standard collocation method, which provides no means to reconstruct the right-hand side, is thus not compatable with the Adjoint Petrov--Galerkin ROM. 
\begin{algorithm}
\caption{Algorithm for an explicit Euler update for the Adjoint Petrov--Galerkin ROM with gappy POD hyper-reduction.}
\label{alg:alg_ag_hyper}
Input: $\acoarsevec^n$ \;
\newline
Output: $\acoarsevec^{n+1}$\;
\newline
Online steps at time-step $n$:
\begin{enumerate}
\item Compute the state at the stencil points: $\ucoarsevec_s^n = \sampvec_s^T \vcoarsevec \acoarsevec^n,$ with $\ucoarsevec_s^n \in \mathbb{R}^{N_s}$ 
\item Compute the generalized coordinates to the right-hand side evaluation via,
\begin{equation*}\label{eq:RHSapprox}
   \mathbf{a}_{\mathbf{R}}^n = \big[ \sampvec^T \RHSbasisvec \big]^{+} \sampvec^T \mathbf{R}(\ucoarsevec_s^n),
\end{equation*}
with $\mathbf{a}_{\mathbf{R}}^n \in \mathbb{R}^r$.
Note that the product $\sampvec^T \mathbf{R}(\ucoarsevec_s^n)$ requires computing $\mathbf{R}(\ucoarsevec_s^n)$ \emph{only at the sample points}, as given by the unit vectors stored in $\sampvec$.
\item Reconstruct the right-hand side at the stencil points: $\overline{\mathbf{R}_s(\ucoarsevec_s^n)} = \mathbf{P}_s^T \RHSbasisvec \mathbf{a}_{\mathbf{R}}^n$
\item Compute the orthogonal projection of the approximated right-hand side at the stencil points:
$$\Pifine \overline{\mathbf{R}_s(\ucoarsevec^n)} =  \overline{\mathbf{R}_s(\ucoarsevec^n)} -  \vcoarsevec \vcoarsevec^T \overline{ \mathbf{R}_s(\ucoarsevec^n)}$$
\item Compute the generalized coordintes for the action of the Jacobian on $\Pifine \overline{\mathbf{R}_s(\ucoarsevec^n)}$ at the sample points using either finite difference or exact linearization. For finite difference:
%\begin{enumerate}
%    \item Finite difference approximation:
    \begin{equation*}
%    \mathbf{a}_{\mathbf{J}}^n = \RHSbasisvec^T \overline{ \mathbf{J}[\ucoarsevec] \Pifine \overline{\mathbf{R}(\ucoarsevec)}} \approx \frac{1}{\epsilon} \big[ \sampvec^T \RHSbasisvec \big]^{+} \sampvec^T  \Big[ \mathbf{R}\big(\ucoarsevec + \epsilon \Pifine  \overline{\mathbf{R}}(\ucoarsevec) \big) - {\mathbf{R}(\ucoarsevec  )} \Big], 
    \mathbf{a}_{\mathbf{J}}^n \approx \frac{1}{\epsilon} \big[ \sampvec^T \RHSbasisvec \big]^{+} \sampvec^T  \Big[ \mathbf{R}_s\big(\ucoarsevec^n_s + \epsilon \Pifine  \overline{\mathbf{R}_s}(\ucoarsevec^n_s) \big) - {\mathbf{R}_s(\ucoarsevec^n_s  )} \Big], 
    \end{equation*}
    with $\mathbf{a}_{\mathbf{J}}^n \in \mathbb{R}^r$. Note $\epsilon$ is a small constant value, usually  $\sim \MC{O}(10^{-5})$.
%    \item Exact linearization:
%    \begin{equation*}
%    \vcoarsevec^T \overline{\mathbf{J}[\ucoarsevec^n] \Pifine \mathbf{R}(\ucoarsevec^n)} = \vcoarsevec^T \RHSbasisvec \big[ \sampvec^T \RHSbasisvec \big]^{+} \sampvec^T \mathbf{R'}[\ucoarsevec^n](\Pifine \overline{\mathbf{R}(\ucoarsevec^n)}),
%    \end{equation*}
%    where $\mathbf{R}'[\ucoarsevec^n]$ is right-hand side operator linearized about $\ucoarsevec^n$.
%    \end{enumerate}
%$$    \mathbf{J}[\ucoarsevec] \Pifine \mathbf{R}(\ucoarsevec) = \frac{1}{\epsilon} \Big[ \mathbf{R}\big(\ucoarsevec + \epsilon \Pifine  \mathbf{R}(\ucoarsevec) \big) - \mathbf{R}(\ucoarsevec  ) \Big] + \MC{O}(\epsilon^2)$$
\item Compute the combined sampled right-hand side:  $\vcoarsevec^T \RHSbasisvec \bigg[ \mathbf{a}_{\mathbf{R}}^n + \tau \mathbf{a}_{\mathbf{J}}^n \bigg]$
\item Update the state: $\acoarsevec^{n+1} = \acoarsevec^n + \Delta t \vcoarsevec^T \RHSbasisvec \bigg[ \mathbf{a}_{\mathbf{R}}^n + \tau \mathbf{a}_{\mathbf{J}}^n \bigg] $
\end{enumerate}

\end{algorithm}

\section{POD Basis Construction}\label{appendix:basisconstruction}

For the 1D Euler case detailed in this manuscript, the procedure for constructing separate POD bases for each conserved variables is as follows:

\begin{enumerate}
\item Run the full-order model for $t \in (0,1)$ at a time-step of $\Delta t = 0.0005$. The state vector is saved at every other time step to create 1000 state snapshots.
\item Collect the snapshots for each state into three state snapshot matrices:
$$
\mathbf{S}_{\rho}  = \begin{bmatrix}
\mathbf{\rho}_1 & \mathbf{\rho}_2 & \hdots & \rho_{1000}
\end{bmatrix},
%\qquad
\mathbf{S}_{\rho u}  = \begin{bmatrix}
\mathbf{\rho u}_1 & \mathbf{\rho u}_2 & \hdots & \mathbf{\rho u}_{1000}
\end{bmatrix},
%\qquad
\mathbf{S}_{\rho E}  = \begin{bmatrix}
\mathbf{ \rho E}_1 & \mathbf{ \rho E}_2 & \hdots & \mathbf{\rho E}_{1000}
\end{bmatrix},
$$
where $\mathbf{\rho}_i, \mathbf{\rho u}_i, \mathbf{\rho E}_i \in \mathbb{R}^{1000}$.
\item Compute the singular-value decomposition (SVD) of each snapshot matrix, e.g. for $\mathbf{S}_{\rho}$,
$$\mathbf{S}_{\rho} \mathrel{\overset{\makebox[0pt]{\mbox{\normalfont\tiny\sffamily SVD}}}{=}} \mathbf{V}_{\rho} {\Sigma}_{\rho} \mathbf{U}^T_{\rho}.$$
The columns of $\mathbf{V}_{\rho}$ and $\mathbf{U}_{\rho}$ are the left and right singular vectors of $\mathbf{S}_{\rho}$, respectively. ${\Sigma}_{\rho}$ is a diagonal matrix of the singular values of $\mathbf{S}_{\rho}$. The columns of $\mathbf{V}_{\rho}$ form a basis for the solution space of $\rho_i$. 

\textit{Remarks}
\begin{enumerate}
\item In this example, a separate basis is computed for each conserved quantity. It is also possible to construct a global basis by stacking $\mathbf{S}_{\rho}$, $\mathbf{S}_{\rho \mathbf{u}}$, and $\mathbf{S}_{\rho \mathbf{E}}$ into one snapshot matrix and computing one ``global" SVD.
\end{enumerate}
\item Decompose each basis into bases for the resolved and unresolved scales by selecting the first $K$ columns and last $1000-K$ columns, respectively, e.g.
$$\mathbf{V}_\rho = \begin{bmatrix} \vcoarsevec_{\rho} & ; & \vfinevec_{\rho} \end{bmatrix},$$
where $\vcoarsevec_{\rho} \in \mathbb{R}^{1000 \times K}$ and $\vfinevec \in \mathbb{R}^{1000 \times 1000 - K}.$  \\
\textit{Remarks}
\begin{enumerate}
\item Each basis vector is orthogonal to the others, hence the coarse and fine-scales are orthogonal.
\item In this example, we have selected 1000 snapshots such that the column space of $\vfullvec$ spans $\Vfull$. In general, this is not the case. As the APG method requires no processing of the fine-scale basis functions, however, this is not an issue.
\end{enumerate}
\item Construct a global coarse-scale basis,
$$\vcoarsevec = \begin{bmatrix}
\vcoarsevec_{\rho} & \mathbf{0} & \mathbf{0} \\
\mathbf{0} & \vcoarsevec_{\rho u} & \mathbf{0} \\
\mathbf{0} & \mathbf{0} & \vcoarsevec_{\mathbf{\rho E}} \\
\end{bmatrix}.$$
\end{enumerate}

\section{Algorithms for the Galerkin and LSPG ROMs}\label{appendix:algorithms}
Section~\ref{sec:cost} presented an analysis on the computational cost of the Adjoint Petrov--Galerkin ROM. This appendix presents similar algorithms and FLOP counts for the Galerkin and LSPG ROMs. The following algorithms and FLOP counts are reported:
\begin{enumerate}
    \item An explicit Euler update to the Galerkin ROM (Algorithm~\ref{alg:alg_g_exp}, Table~\ref{tab:alg_g_exp}).
    \item An implicit Euler update to the Galerkin ROM using Newton's method with Gaussian elimination (Algorithm~\ref{alg:alg_g_imp}, Table~\ref{tab:alg_g_imp}).
    \item An implicit Euler update to the Least-Squares Petrov--Galerkin ROM using the Gauss-Newton method with Gaussian elimination (Algorithm~\ref{alg:alg_LSPG}, Table~\ref{tab:alg_LSPG}).
\end{enumerate}

%% Galerkin Explicit
\begin{algorithm}[h!]
\caption{Algorithm for an explicit Euler update for the Galerkin ROM.}
\label{alg:alg_g_exp}
%\SetKwInOut{Input}{Input}\SetKwInOut{Output}{Output}
Input: $\acoarsevec^n$ \;
\newline
Output: $\acoarsevec^{n+1}$\;
\newline
Steps:
\begin{enumerate}
\item Compute the state from the generalized coordinates, $\ucoarsevec^n =\vcoarsevec \acoarsevec^{n}$
\item Compute the right-hand side from the state, $\mathbf{R}(\ucoarsevec^n)$
\item Project the right-hand side, $\vcoarsevec^T\mathbf{R}(\ucoarsevec^n)$
\item Update the state $\acoarsevec^{n+1} = \acoarsevec^n + \Delta t \vcoarsevec^T\mathbf{R}(\ucoarsevec^n)$
\end{enumerate}
\end{algorithm}

\begin{table}[h!]
\begin{tabular}{p{7cm} p{8cm}}
\hline
Step in Algorithm~\ref{alg:alg_g_exp} & Approximate FLOPs \\
\hline
1    & $2NK - N$  \\
2    & $\omega N$  \\
3    & $2NK - K$  \\
4    & $2K $  \\
\hline
Total    & $4 N K + (\omega-1) N + K$ \\
\hline
\end{tabular}
\caption{Approximate floating-point operations for an explicit Euler update to the Galerkin method reported in Algorithm~\ref{alg:alg_g_exp}.}
\label{tab:alg_g_exp}
\end{table}

%% Galerkin Implicit
%%====================================================
\begin{algorithm}[h!]
\caption{Algorithm for an implicit Euler update for the Galerkin ROM using Newton's Method with Gaussian Elimination}
\label{alg:alg_g_imp}
%\SetKwInOut{Input}{Input}\SetKwInOut{Output}{Output}
Input: $\acoarsevec^n$, residual tolerance $\xi$ \;
\newline
Output: $\acoarsevec^{n+1}$\;
\newline
Steps:
\begin{enumerate}
\item Set initial guess, $\acoarsevec_k$
\item  Loop while $\mathbf{r}^k > \xi$
\begin{enumerate}
    \item Compute the state from the generalized coordinates, $\ucoarsevec_k = \vcoarsevec \acoarsevec_k$
    \item Compute the right-hand side from the full state, $\mathbf{R}(\ucoarsevec_k)$
    \item Project the right-hand side, $\vcoarsevec^T \mathbf{R}(\ucoarsevec_k)$
    \item Compute the Galerkin residual, $\mathbf{r}_G(\acoarsevec_k) = \acoarsevec_k - \acoarsevec^n - \Delta t \vcoarsevec^T \mathbf{R}(\vcoarsevec \acoarsevec_k)$
    \item Compute the residual Jacobian, $\frac{\partial \mathbf{r}(\acoarsevec_k)}{\partial \acoarsevec_k}$
    \item Solve the linear system via Gaussian Elimination: $\frac{\partial \mathbf{r}(\acoarsevec_k)}{\partial \acoarsevec_k} \Delta  \acoarsevec = - \mathbf{r}(\acoarsevec_k)$
    \item Update the state: $\acoarsevec_{k+1} = \acoarsevec_k + \Delta \acoarsevec$
    \item $k = k + 1$
\end{enumerate}
\item Set final state, $\acoarsevec^{n+1} = \acoarsevec_k$
\end{enumerate}
\end{algorithm}

\begin{table}[h!]
\centering
\begin{tabular}{p{7cm} p{8cm}}
\hline
Step in Algorithm~\ref{alg:alg_apg_imp}& Approximate FLOPs \\
\hline
2a    & $2 N K - N $ \\
2b    & $ \omega N $ \\
2c    & $2 N K - K$  \\
2d    & $ 3K $  \\
2e    & $ 4 N K^2 +  (\omega - 1) N K + 2K^2 $ \\
2f    & $ K^3 $ \\
2g    & $ K $ \\
\hline
Total & $ (\omega - 1)N + 3K + (\omega + 3)NK + 2K^2 + 4NK^2 + K^3 $
\end{tabular}
\caption{Approximate floating-point operations for one Newton iteration for the implicit Euler update to the Galerkin method reported in Algorithm~\ref{alg:alg_g_imp}.}
\label{tab:alg_g_imp}
\end{table}

\clearpage

%=================================================
%%% LSPG Implicit
\begin{algorithm}[H]
\caption{Algorithm for an implicit Euler update for the LSPG ROM using a Gauss-Newton method with Gaussian Elimination}
\label{alg:alg_LSPG}
%\SetKwInOut{Input}{Input}\SetKwInOut{Output}{Output}
Input: $\acoarsevec^n$, residual tolerance $\xi$ \;
\newline
Output: $\acoarsevec^{n+1}$\;
\newline
Steps:
\begin{enumerate}
\item Set initial guess, $\acoarsevec_k$
\item  Loop while $\mathbf{r}_k > \xi$
\begin{enumerate}
    \item Compute the state from the generalized coordinates, $\ucoarsevec_k =\vcoarsevec \acoarsevec_{k}$
    \item Compute the right-hand side from the full state, $\mathbf{R}(\ucoarsevec_k)$
    \item Compute the residual, $\mathbf{r} (\ucoarsevec_k) = \ucoarsevec_k - \ucoarsevec^n - \Delta t \mathbf{R}(\ucoarsevec_k)$
    \item Compute the test basis, $\mathbf{W}_k = \frac{\partial \mathbf{r}(\ucoarsevec_k)}{\partial \ucoarsevec_k} \vcoarsevec = \frac{\partial \mathbf{r}(\ucoarsevec_k)}{\partial \acoarsevec_k} $
    \item Compute the product, $\Wcoarsevec_k^T \Wcoarsevec_k$
    \item Project the residual onto the test space, $\Wcoarsevec^T \mathbf{r}(\acoarsevec_k)$
    \item Solve $\Wcoarsevec^T \Wcoarsevec \Delta \acoarsevec = - \Wcoarsevec^T \mathbf{r}(\acoarsevec_k) $ for $\Delta \acoarsevec$ via Gaussian elimination
    \item Update solution, $\acoarsevec_{k+1} = \acoarsevec_k + \Delta \acoarsevec$
    \item k = k + 1
\end{enumerate}
\item Set final state, $\acoarsevec^{n+1} = \acoarsevec_k$ 
\end{enumerate}
\end{algorithm}

\begin{table}[H]
\begin{tabular}{p{7cm} p{8cm}}
\hline
Step in Algorithm~\ref{alg:alg_LSPG}& Approximate FLOPs \\
\hline
2a    & $ 2NK - N $ \\
2b    & $ \omega N $  \\
2c    & $ 3N $  \\
2d    & $ (\omega + 2)NK + 2NK^2  $  \\
2e    & $ 2NK^2 - K^2 $ \\
2f    & $ 2NK - K $ \\
2g    & $ K^3 $ \\
2h    & $ K $ \\
\hline
Newton Iteration Total & $ (\omega + 2)N + (\omega + 6) NK - K^2 + 4NK^2 + K^3 $ \\

\end{tabular}
\caption{Approximate floating-point operations for one Newton iteration for the implicit Euler update to the LSPG method reported in Algorithm~\ref{alg:alg_LSPG}.}
\label{tab:alg_LSPG}
\end{table}

%============================

\end{appendices}

\clearpage

%\section*{References}
%\begin{thebibliography}{10}
\bibliographystyle{aiaa}
\bibliography{refs}{}

%\bibliographystyle{aiaa}
%\bibliography{refs}{}
\end{document}